\documentclass[a4paper,reqno]{amsart}

\pdfoutput=1

\usepackage{microtype}
\usepackage[utf8]{inputenc}

\usepackage{amsmath,amstext,amssymb,mathrsfs,amscd,amsthm,indentfirst,bbm}
\usepackage{amsfonts}
\usepackage{stmaryrd}

\usepackage{todonotes}

\usepackage[style=ieee-alphabetic, url=false, backref=true, doi=false, isbn=false, eprint=false]{biblatex}
\DeclareFieldFormat{url}{\textsc{url}\addcolon\space\url{#1}}

\usepackage{float}
\usepackage{adjustbox}
\usepackage{booktabs}
\usepackage{verbatim}
\usepackage{enumerate}
\usepackage{graphicx}
\usepackage{textcomp}
\usepackage{tikz,tikz-cd}
\usetikzlibrary{decorations.pathreplacing}
\usetikzlibrary{patterns}
\usepackage{ifthen}
\usepackage{url}

\usepackage{xcolor}
\definecolor{lightgreen}{RGB}{45,137,105}
\definecolor{purple}{RGB}{155,26,103}
\usepackage{hyperref}
\hypersetup{bookmarksdepth=3}
\PassOptionsToPackage{hyphens}{url}\usepackage{hyperref}
\hypersetup{
    colorlinks  = true,
    linkcolor   = purple,
    citecolor   = lightgreen,
    urlcolor    = [rgb]{0,.40,.80}
}
\usepackage[nameinlink,capitalize]{cleveref}
\usepackage{mathtools}
\usepackage{mleftright}
\usepackage{enumitem}

\usepackage[new]{old-arrows}

\usepackage{libertine}
\makeatletter
\renewcommand\th@plain{\slshape}
\makeatother

\calclayout

\newcommand{\bC}{\mathbb C}

\newcommand{\bP}{\mathbb P}
\newcommand{\bQ}{\mathbb Q}
\newcommand{\bR}{\mathbb R}
\newcommand{\bS}{\mathbb S}

\newcommand{\bZ}{\mathbb Z}

\newcommand{\cC}{\mathcal C}

\newcommand{\cE}{\mathcal E}
\newcommand{\cF}{\mathcal F}

\newcommand{\cH}{\mathcal H}

\newcommand{\cL}{\mathcal L}
\newcommand{\cM}{\mathcal M}

\newcommand{\cO}{\mathcal O}
\newcommand{\cP}{\mathcal P}

\newcommand{\cT}{\mathcal T}

\newcommand{\cY}{\mathcal Y}
\newcommand{\cZ}{\mathcal Z}

\newcommand{\fm}{\mathfrak m}

\newcommand{\lra}{\longrightarrow}

\newcommand{\Gammahol}{\Gamma_{\mathrm{hol}}}
\newcommand{\Gammacon}{\Gamma_{\cC^0}}

\newcommand{\Hol}{\mathrm{Hol}}
\newcommand{\Map}{\mathrm{Map}}

\newcommand{\UConf}{\mathrm{UConf}}

\newcommand{\Diff}{\mathrm{Diff}}
\newcommand{\Conf}{\mathrm{Conf}}

\DeclareMathOperator{\Sym}{Sym}

\newcommand{\catSp}{\mathsf{Sp}}

\newcommand{\catS}{\mathsf{S}}

\newcommand{\opp}{\mathrm{op}}

\newcommand{\catTop}{\mathsf{Top}}

\DeclareMathOperator{\fib}{fib}
\DeclareMathOperator{\cofib}{cofib}

\DeclareMathOperator*{\colim}{colim}

\newcommand{\ovwedge}{\overline{\wedge}}
\newcommand{\sh}{\mathrm{sh}}

\DeclareMathOperator{\tub}{tub}
\newcommand{\can}{\mathrm{can}}

\newcommand{\MTSO}{\mathrm{MTSO}}
\DeclareMathOperator{\conn}{conn}

\NewDocumentCommand{\set}{somm}{%
   \IfNoValueTF{#2}
    {\IfBooleanTF{#1}{\{#3 \mid #4\}}{\mleft\{ #3 \mathrel{}\middle\vert\mathrel{} #4 \mright\}}}
    {\mathopen{#2\{}#3 \mathrel{}#2\vert\mathrel{} #4\mathclose{#2\}}}%
  }

\newtheorem{theorem}{Theorem}[section]
\newtheorem*{theorem*}{Theorem}
\newtheorem{theorem-in-progress}[theorem]{Theorem-in-progress}

\newtheorem*{conjecture*}{Conjecture}

\newtheorem{definition}[theorem]{Definition}
\newtheorem{lemma}[theorem]{Lemma}
\newtheorem{remark}[theorem]{Remark}
\newtheorem{proposition}[theorem]{Proposition}
\newtheorem{example}[theorem]{Example}

\newtheorem{corollary}[theorem]{Corollary}

\newtheorem{maintheorem}{Theorem}

\title[The moduli space of maps from hypersurfaces and its Goodwillie tower]{The moduli space of maps from hypersurfaces\\and its Goodwillie tower}
\author{Alexis Aumonier}
\date{}
\address{Department of Mathematics, Stockholms universitet, 106 91 Stockholm, Sweden}
\email{alexis.aumonier@math.su.se}

\begin{document}

\begin{abstract}
We define a moduli space of continuous maps from smooth hypersurfaces varying in an ample linear system. To study its topology, we construct a scanning map to an appropriate section space and investigate its effect on homology. Our main innovation is a general argument based on Goodwillie calculus and applicable to various moduli spaces of maps. Along the way, we compute the Goodwillie tower of the section space functor based at any retractive space.
\end{abstract}

\maketitle

\setcounter{tocdepth}{1}
\tableofcontents

\section{Introduction}

Let $X$ be a smooth projective complex variety and $\cL$ be a very ample line bundle on it. Recall that the linear system $|\cL|$ contains an open dense locus $|\cL|^{\mathrm{ns}} \subset |\cL|$ parameterising the smooth hypersurfaces. Let $Y$ be a topological space. The present work is concerned with studying the moduli space of smooth hypersurfaces in $Y$, or in other words the space of maps
\[
    \set{(H,g)}{H \in |\cL|^{\mathrm{ns}}, \ g \colon H \to Y \text{ continuous map}}.
\]
This space lies at the interface between algebraic geometry and algebraic topology: on the one hand we are varying the hypersurfaces $H$ in an algebraic moduli space $|\cL|^{\mathrm{ns}}$, but on the other hand we only require the maps $g$ to be continuous. Our main motivation, on which we will expand shortly, is to further our understanding of $|\cL|^{\mathrm{ns}}$ via tools from homotopy theory. Our second goal with this paper is to showcase a general technique to study moduli spaces of continuous maps using Goodwillie calculus.

\bigskip

Let us now introduce some notation and state our main theorem. We will work in a slightly more general setting than above and allow $Y \to X$ to be a fibration. Let $\cM(\cL) \subset H^0(X,\cL)$ be the space of global algebraic sections of $\cL$ which are non-singular: $s \in \cM(\cL)$ if $(s,\mathrm{d}s)(x) \neq (0,0)$ for all $x \in X$. For any $s \in \cM(\cL)$, let
\[
    V(s) = \set{x \in X}{s(x) = 0} \subset X
\]
be the smooth hypersurface given by the vanishing locus of $s$. The main object of this article is the space
\[
    \cM(\cL;Y) = \set{(s,g)}{s \in \cM(\cL), \ g \in \Gamma(Y|_{V(s)})},
\]
where $\Gamma(-)$ denotes the space of continuous sections, and $Y|_{V(s)}$ the restriction of $Y$ to a hypersurface $V(s) \subset X$. When $Y = X \times Y' \to X$ is a trivial fibration, this construction generalises the moduli space of maps to $Y'$ as above. Let $\Omega^1_X$ be the cotangent bundle of $X$, let $*_X$ denote the fibrewise join of bundles on $X$, and let $S(-)$ denote the sphere bundle of a vector bundle. Our main theorem compares the cohomologies of $\cM(\cL;Y)$ and a continuous section space in a range of degrees:
\begin{maintheorem}\label{maintheorem:scanning}
Let $X$ be a smooth projective complex variety with a very ample line bundle $\cL$. Let $Y \to X$ be a fibration with a section, and assume that the fibre is $(\dim_\bR(X)-2)$-connected. If $\cL$ is $d$-jet ample, then there is a morphism of cohomology rings
\[
    H^*\big(\Gamma(S(\cL) *_X (S(\Omega^1_X \otimes \cL) \times_X Y)); \bZ\big) \lra H^*\big(\cM(\cL;Y);\bZ\big)
\]
which is an isomorphism in degrees $* < \frac{d-3}{2}$.
\end{maintheorem}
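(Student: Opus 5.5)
We construct a scanning map
\[
    \cM(\cL;Y) \lra \Gamma\big(S(\cL) *_X (S(\Omega^1_X \otimes \cL) \times_X Y)\big)
\]
and show that it is a homology isomorphism in the stated range. The construction of the map runs as follows. A pair $(s,g)$ gives at each point $x \in X$ the $1$-jet $(s(x),\mathrm{d}s(x))$, which is nonzero. Near $V(s)$ we use $g$, extended to a tubular neighbourhood and cut off using the given section of $Y$. Fibrewise, the join $S(\cL)*(S(\Omega^1\otimes\cL)\times Y)$ is the space of pairs (value, derivative-with-label), with the join coordinate recording the relative size of $s$ against $\mathrm{d}s$. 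When $s(x)=0$ the derivative is nonzero, and it carries the label $g(x)$. Normalising, this gives a continuous section. We then compose with the inclusion of $\cM(\cL;Y)$ into the space of all smooth (or continuous) non-singular sections decorated with labels, in the spirit of Vassiliev and Vakil--Wood type $h$-principles.

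We compare both sides via a discriminant and simplicial resolution. Write the complement of $\cM(\cL;Y)$ in the ambient labelled space as a singular locus, and resolve it simplicially by configurations of singular points: $\Conf_k(X)$ carrying local jet data. The $d$-jet ampleness of $\cL$ guarantees that for $k \le d$ distinct points the evaluation of $d$-jets is surjective. Hence the $k$-th stratum of the resolution is a vector bundle over a configuration space with an explicit codimension. Its Borel--Moore homology is controlled by a Thom isomorphism, and strata with $k > (d-1)/2$ or so only contribute in degrees above $\frac{d-3}{2}$. The same combinatorics governs the continuous section space, stratified by the number of points where the section lands in the complementary locus. This is where the connectivity of the fibre ($(\dim_\bR X-2)$-connected) enters: it makes the labelling space highly connected, so that the configuration strata are compatible.

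The cleaner packaging, which I would actually use, is the Goodwillie calculus approach suggested by the abstract. We view both sides as functors of the retractive space $Y$ over $X$. The section space functor $Y\mapsto\Gamma(\,\cdots\times_X Y)$ has a Goodwillie tower whose $k$-th layer is a section space of $k$-fold fibrewise smash products over $\Conf_k(X)$, with $\Sigma_k$-homotopy orbits. The algebraic moduli space, after Alexander duality and truncation at level $d$ (via jet ampleness), admits a matching filtration by number of points. We show that the scanning map induces an equivalence on the layers $k \le$ roughly $d/2$. The connectivity hypothesis on the fibre ensures that the $k$-th layer is $(k\cdot c)$-connected, so the tower converges and the remaining layers lie above degree $\frac{d-3}{2}$. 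Compatibility with cup products is automatic because the comparison is induced by a map of spaces.

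I expect the main obstacle to be identifying the layers on the algebraic side with those of the section space tower. This requires showing that the Vassiliev-type spectral sequence terms coincide with the Goodwillie layers, including the $\Sigma_k$-equivariance and the twisting by $\cL$ and $\Omega^1_X\otimes\cL$. I would first do this for $Y=X$ (where the result reduces to known results on $\cM(\cL)$ versus jet sections), and then bootstrap via naturality in $Y$.
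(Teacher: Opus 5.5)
Your third paragraph has the same architecture as the paper: both sides are viewed as functors of $Y\in\catS_{X//X}$, the bottom stage $P_0$ is the known h-principle $\cM(\cL)\to\Gamma(S(J^1\cL))$, the towers are compared layer by layer, and the connectivity of the fibre gives convergence. But the step that carries the proof is missing, and the mechanism you propose for it does not work.

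\emph{Bootstrapping from $Y=X$.} This cannot reach the higher layers. $X$ is the zero object of $\catS_{X//X}$, so $\Sigma^\infty_X X=0$ and $D_n$ vanishes for every $n\geq 1$ on both sides. The case $Y=X$ is exactly $P_0$ and says nothing about $D_n$ for $n\geq1$.

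\emph{Vassiliev spectral sequence.} You cannot identify the moduli-side layers with its terms. Once labels in $Y$ are added, there is no ambient vector space in which $\cM(\cL;Y)$ is the complement of a discriminant, so Alexander duality is unavailable; this is also why your second paragraph does not go through. Moreover, Vassiliev's filtration counts singular points of singular sections, whereas the Goodwillie layers count marked points on a \emph{smooth} hypersurface at which the label is evaluated.

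\emph{What the paper does instead.} It uses
\[
\Sigma^\infty_+\cM^{\mathrm{tub}}(\cL;Y)\simeq\colim_{s\in\cM(\cL)}\Sigma^\infty_+\Gamma(Y|_{\tub V(s)})
\]
and the fact that $P_n$ commutes with colimits. It then applies the tower of section spaces (\cref{maintheorem:goodwillie}) fibrewise, together with a Thom collapse onto the normal bundle $\cL|_{V(s)}$ implemented by $\can$. This identifies $D_n$ with $\Gamma_c(\Conf_n(X);q_!\bS^0\otimes\bS^{n\cL}\otimes(\Sigma^\infty_XY)^{\boxtimes n})_{h\Sigma_n}$, where $q\colon\cZ_n(\cL)\to\Conf_n(X)$ is the universal hypersurface with $n$ marked points. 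The map on $D_n$ is then induced, fibrewise over $c\in\Conf_n(X)$, by the jet map
\[
\{s\in\cM(\cL) : s|_c=0\}\lra\{f\in\Gamma(S(J^1\cL)) : f(c)\subset S(\Omega^1_X\otimes\cL)\}.
\]

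\emph{The missing input.} What you need is an h-principle for hypersurfaces through $n$ prescribed points: this jet map is homology $(\frac{d-3}{2}-n)$-connected (\cref{theorem:hprinciple-with-vanishing}, from earlier work with Das). This is a separate statement for each $n$, not a formal consequence of $n=0$. This unlabelled setting is where discriminant methods can legitimately be used, since $\{s : s|_c=0\}$ is open in a vector space. The range degrades with $n$, and the proof works because the other factors compensate: $\bS^{n\cL}$ adds $2n$, $(\Sigma^\infty_XY)^{\boxtimes n}$ adds $n\conn(Y/X)$, and Poincar\'e duality on $\Conf_n(X)$ costs $n\dim_\bR X$. So the layer map is $(\frac{d-3}{2}+n(\conn(Y/X)+1-\dim_\bR X))$-connected, which is at least $\frac{d-3}{2}$ precisely under your hypothesis on the fibre.

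\emph{Two further corrections.} First, your claim that the scanning map is an \emph{equivalence} on layers $k\lesssim d/2$ is not something you can prove: the layer maps come from jet maps that are only highly connected. What is true, and sufficient, is a uniform connectivity bound on every layer map. Second, convergence does not follow from the layers becoming highly connected. You need $F\simeq\lim_nP_nF$, which the paper obtains from analyticity. On the moduli side, the functor is a colimit over $\cM(\cL)$ of $(\dim_\bR X-2)$-analytic section-space functors. On the section side, the functor $\gamma$ raises connectivity by $2$.
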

In \cref{theorem:main} below, we state a somewhat more refined version of our main theorem by first defining a \emph{scanning map} from (a fattened up version of) $\cM(\cL;Y)$ to a section space of a fibration equivalent to the one appearing in the theorem above. We also recall the definition of jet ampleness in \cref{definition:jet-ampleness}, but the reader is invited to just take $\cL$ to be the $d$th tensor power of a very ample line bundle.

\bigskip

When $Y = X$ is the trivial fibration, $\cM(\cL;Y) = \cM(\cL)$ is simply the moduli space of equations of smooth hypersurfaces in the linear system $|\cL|$. Its homology was studied in a range via an h-principle in \cite[Corollary~8.1]{hprinciple}. However the proof given in \cite[\S~2.1]{hprinciple}, or in any other typical proof of an h-principle à la Vassiliev \cite{Vassiliev}, relies on Alexander duality and studying a complementary moduli space of ``singular objects". (For example, $\cM(\cL)$ is studied via its complementary locus of singular sections of $\cL$.) Upon adding the data of continuous maps to $Y$, it is not clear that such a complementary space should exist. Instead we propose a novel approach using Goodwillie calculus.

To highlight the key ideas, let us be very informal and consider a generic moduli space $\cM$ of geometric objects, e.g. smooth hypersurfaces ($\cM(\cL)$), Riemann surfaces ($\cM_g$), configurations of points ($\Conf_i(X)$), etc. Suppose that $\cM$ is amenable to homotopical study via a ``scanning map" $\cM \to \cF$ which induces an isomorphism in homology in a range of degrees. In the existing literature, the space $\cF$ usually takes the form of a section space (see \cite{McDuff1975,Segal1973} for configuration spaces, \cite{hprinciple} for hypersurfaces, \cite{FedericoOscar} for smooth surfaces) or an infinite loop space (see \cite{MadsenWeiss,OscarSorenActa,OscarSorenAnnals,OscarSorenJAMS} for the seminal works on moduli of manifolds). Let $Y$ be a space, and let $\cM(Y)$ be a moduli space of continuous maps from objects in $\cM$ to $Y$. One can naturally wonder if there is a space $\cF(Y)$ and a generalisation of the scanning map $\cM(Y) \to \cF(Y)$ which is still highly connected in homology. Here are two examples:
\begin{itemize}
    \item Let $M$ be a connected open manifold which is the interior of a compact manifold with boundary and has dimension greater than 1. Let $\UConf_n(M)$ denote the unordered configuration space of $n$ points on $M$. Let $S^{TM}$ be the fibrewise 1-point compactified tangent bundle of $M$. Then there is a scanning map
    \[
        \UConf_n(M) \lra \Gamma_c(S^{TM})
    \]
    to the space of compactly supported sections, which induces an isomorphism in homology in degrees $* \ll n$ onto the component that it hits. See \cite[Theorem~1.2]{McDuff1975} for the statement and \cite{Segal1973} for the origin of this scanning map. Let $Y$ be a connected space and let 
    \[
        \UConf_n(M;Y) = \set{(m_1,\dotsc,m_n) \in M^n}{m_i \neq m_j, \ i \neq j} \times_{\Sigma_n} Y^n
    \]
    be the unordered configuration space of $n$ particles in $M$ labelled in $Y$. Then there is a map
    \[
        \UConf_n(M;Y) \lra \Gamma_c(S^{TM} \wedge Y_+)
    \]
    which induces an isomorphism in homology in degrees $* \ll n$ onto the component that it hits. This follows from combining \cite[Proposition~2]{Bodigheimer}, homological stability \cite[Theorem~A]{OscarConfig}, and the group completion theorem.
    \item Let $\cM_g$ be the moduli space of compact Riemann surfaces of genus $g$. Let $\MTSO(2)$ be the Thom spectrum of the virtual inverse of the tautological bundle on $BSO(2)$. There is a scanning map \cite[p.~511]{MadsenTillmann}
    \[
        \cM_g \lra \Omega^\infty \MTSO(2)
    \]
    which induces an isomorphism in homology in degrees $* \ll g$ onto the component that it hits \cite[Theorem~1.1]{MadsenWeiss}. Let $Y$ be a simply connected space and let
    \[
        \cM_g(Y) = \set{(C,f)}{C \in \cM_g, \ f \colon C \to Y}
    \]
    be the moduli space of maps to $Y$ (see \cite[Introduction]{CohenMadsen} for a precise definition). There is a map
    \[
        \cM_g(Y) \lra \Omega^\infty (\MTSO(2) \wedge Y_+)
    \]
    which induces an isomorphism in homology in degrees $* \ll g$ onto the component that it hits, see \cite[Theorem~0.1]{CohenMadsen} for the original statement and \cite[Theorem~1.2 and \S~1.5]{OscarResolutionmoduli} for a generalisation to any tangential structure.
\end{itemize}
Let us assume that we have found a good candidate $\cF(Y)$ with a scanning map $\cM(Y) \to \cF(Y)$. To check that it is the right guess, we may proceed as follows. If they are natural enough, the scanning maps for varying $Y$'s assemble into a natural transformation of functors on the category of spaces. In fact, as we are ultimately interested in homology, it is technically easier to work with the suspended functors
\[
    \Sigma^\infty_+ \cM(Y) \lra \Sigma^\infty_+\cF(Y)
\]
from the category of spaces to spectra. This transformation induces a map on the Goodwillie towers
\[
\begin{tikzcd}
\cdots \arrow[r] & P_n\Sigma^\infty_+\cM(Y) \arrow[r] \arrow[d] & \cdots \arrow[r] & P_1\Sigma^\infty_+\cM(Y) \arrow[r] \arrow[d] & P_0\Sigma^\infty_+\cM(Y) \simeq \Sigma^\infty_+\cM \arrow[d] \\
\cdots \arrow[r] & P_n\Sigma^\infty_+\cF(Y) \arrow[r]           & \cdots \arrow[r] & P_1\Sigma^\infty_+\cF(Y) \arrow[r]           & P_0\Sigma^\infty_+\cF(Y) \simeq \Sigma^\infty_+\cF         
\end{tikzcd}
\]
In many situations (usually when the connectivity of $Y$ is greater than the dimension of the geometric objects parameterised by $\cM$), the inverse limit of the tower recovers the functor. In that case, to show that $\Sigma^\infty_+ \cM(Y) \to \Sigma^\infty_+\cF(Y)$ is highly connected, it is enough to prove that
\[
    P_n\Sigma^\infty_+\cM(Y) \lra P_n\Sigma^\infty_+\cF(Y)
\]
is highly connected for each $n \geq 0$. By induction, it then suffices to show that this is the case for the induced maps on the layers $D_n(-) = \fib(P_n(-) \to P_{n-1}(-))$ of the towers:
\[
    D_n\Sigma^\infty_+\cM(Y) \lra D_n\Sigma^\infty_+\cF(Y).
\]
One of the major practical aspects of Goodwillie calculus is that these layers are often computable. In fact, if the moduli space $\cM(Y)$ sits in a fibration
\[
    \Map(M,Y) \lra \cM(Y) \lra \cM
\]
then formal arguments can be made to compute the derivatives of $\Sigma^\infty_+\cM(Y)$ from those of $\Sigma^\infty_+\Map(M,Y)$ for a fixed $M \in \cM$. For the latter, when $M$ is a manifold, Arone \cite[Theorem~2]{Greg} has computed the $n$th derivative in terms of the configuration space of $n$ points on $M$. It follows that in good cases $D_n\Sigma^\infty_+\cM(Y)$ is expressed in terms of a moduli space $\cM_n$ parameterising objects in $\cM$ with $n$ distinct markings, e.g. bicoloured configurations when $\cM = \Conf_i(X)$ or marked Riemann surfaces when $\cM = \cM_g$. On the other hand, if the candidate functor $\cF(Y)$ is defined via natural constructions in homotopy theory, one can usually compute its Goodwillie derivatives. We have explained the simplest case of infinite loop spaces (with $\cM = \cM_g$) in \cref{section:POC}. For section spaces, e.g. in this article or for configuration spaces, one can instead rely on the results of Arone or their generalisation to section spaces established in the present work:
\begin{maintheorem}\label{maintheorem:goodwillie}
Let $X$ be a finite CW-complex. Let $E \in \catS_{X//X}$ be a retractive space over $X$. The Goodwillie tower of the functor
\begin{align*}
    F \colon \catS_{E//E} &\lra \catSp \\
        Y &\longmapsto \Sigma^\infty\Gamma(X;Y)
\end{align*}
converges for retractive spaces $Y \in \catS_{E//E}$ such that the projection $Y \to E$ is at least $(\dim(X)+1)$-connected. Furthermore, the $n$th polynomial approximation is given by the functor $A_n(X,-)$ constructed in \cref{definition:An}. When $X$ is a closed manifold, the $n$th derivative is given by the parameterised spectrum
\[
    \mathrm{ev}_!\bS^0 \otimes p^*\bS^{-TX^n} \otimes p^*\omega_!\bS^0 \in \catSp^{E^n \sslash \Sigma_n}
\]
where $p \colon E^n \sslash \Sigma_n \to X^n \sslash \Sigma_n$ is the projection, $\mathrm{ev} \colon X^n \sslash \Sigma_n \times \Gamma(E) \to E^n \sslash \Sigma_n$ is the evaluation map, and $\omega \colon \UConf_n(X) \to X^n \sslash \Sigma_n$ is the inclusion of the unordered configuration space.
\end{maintheorem}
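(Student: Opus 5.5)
The plan is to reduce to Arone's computation for mapping spaces by a local-to-global argument over the cells of $X$. First I will treat the functor $Y \mapsto \Sigma^\infty\Gamma(X;Y)$ as a functor of the retractive space $Y$ over $E$, and observe that it is compatible with decompositions of $X$. If $X = X' \cup_{\partial D} D$ is obtained by attaching a cell, then restriction gives a homotopy pullback
\[
    \Gamma(X;Y) \simeq \Gamma(X';Y) \times_{\Gamma(\partial D;Y)} \Gamma(D;Y).
\]
Over a cell, $E$ and $Y$ are fibre-homotopy trivial, so $\Gamma(D;Y)$ reduces to (a retract of) a mapping space from a disc. The functor $A_n(X,-)$ of \cref{definition:An} is presumably built as an ``$n$-excisive mapping space model'': an end or homotopy limit over configurations of at most $n$ points in $X$, or equivalently Arone's model $\Map(X^{\wedge k}/\Delta^{k}_{\text{fat}},\,\ldots)$ made parametrised over $X^n\sslash\Sigma_n$ and $E^n\sslash\Sigma_n$. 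So I will first check two formal properties. (i) $A_n(X,-)$ is $n$-excisive in $Y$, which should be evident from its construction as a limit of functors of the form $Y \mapsto \Sigma^\infty$ of $k$-fold fibre products of $Y$, $k \le n$, with homotopy invariant coefficients. (ii) The natural map $F \to A_n(X,-)$ exists and has the universal property of $P_n F$.

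For (ii) I will induct on the cells of the finite CW-complex $X$. Both $X \mapsto P_n F$ and $X \mapsto A_n(X,-)$ turn homotopy pushouts in $X$ into homotopy pullbacks. For $A_n$ this should follow from its construction. For $P_n F$ it is less automatic, because $P_n$ commutes with finite limits of functors to spectra but $\Sigma^\infty$ does not commute with pullbacks. I will deal with this by working with the Goodwillie tower of the pullback diagram and using that in spectra pullbacks are pushouts, so the fibre sequences survive applying $P_n$. For a point, or a disc, the comparison is Arone's result \cite[Theorem~2]{Greg}, or even the trivial case $\Sigma^\infty$ of a space. This handles the induction step and identifies $P_n F \simeq A_n(X,-)$.

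Convergence will then follow from a connectivity estimate on the layers. The $n$th layer is a homotopy orbit construction over $\Sigma_n$ of sections over roughly $n$-fold configurations in $X$, with fibres $n$-fold smash powers of the fibres of $Y \to E$. If $Y \to E$ is $c$-connected with $c \ge \dim X + 1$, then $D_n$ is about $(n(c - \dim X) - \text{const})$-connected, and this tends to infinity. I will also verify that $F \to \lim P_n F$ is an equivalence, again by cell induction starting from Arone's convergence result for mapping spaces.

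For the derivative when $X$ is a closed manifold, I will identify the layer. In the layer, the diagonals are removed, which gives the restriction $\omega$ to $\UConf_n(X)$. Atiyah duality for configuration spaces inside $X^n$ then converts the compactly supported cohomology of the configurations into the twist $\bS^{-TX^n}$. Pushing forward along the evaluation $\mathrm{ev}$ accounts for the base point in $\Gamma(E)$. Assembling these three factors produces the stated tensor product of parameterised spectra.

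I expect the main obstacle to be step (ii), namely showing that $P_n F$ sends cell attachments to pullbacks despite $\Sigma^\infty$ not preserving them. My first attempt there is to reduce to the $\Sigma^\infty$ of iterated fibre products and use the Hilton–Milnor/Arone cross-effect comparison.
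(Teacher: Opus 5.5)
Your treatment of the derivative (remove the fat diagonal, apply Atiyah duality on $\UConf_n(X)$, push forward along $\mathrm{ev}$) matches the paper. The gap is in the step meant to identify $P_nF$. It rests on a false claim: for $n\ge 2$, neither $X\mapsto P_nF_X$ nor $X\mapsto A_n(X,-)$ turns a homotopy pushout in $X$, such as a cell attachment, into a homotopy pullback. Here $F_B(Y)=\Sigma^\infty\Gamma(B;Y|_B)$.

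The simplest attachment already fails. Take $X=E=S^0=*\sqcup *$, so that $Y$ is a pair of pointed spaces $Z_1,Z_2$.
\begin{itemize}
\item $F_{S^0}(Y)=\Sigma^\infty(Z_1\times Z_2)\simeq\Sigma^\infty Z_1\oplus\Sigma^\infty Z_2\oplus\Sigma^\infty(Z_1\wedge Z_2)$ is already $2$-excisive, so it equals $P_2F_{S^0}(Y)$.
\item The pullback of $P_2F_{*}\to P_2F_\varnothing=0\leftarrow P_2F_{*}$ is only $\Sigma^\infty Z_1\oplus\Sigma^\infty Z_2$.
\item The cross term is lost. It is the summand indexed by $S=\{1,2\}$ in \cref{lemma:splitting-An}, which neither point sees on its own.
\end{itemize}
What is true is only contravariant $n$-excision: $A_n(-,Y)$ sends strongly cocartesian $(n+1)$-cubes to cartesian cubes (\cref{proposition:first-variable-excision}). $\Sigma^\infty\Gamma(-;Y)$ does not even do that. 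Your remedy, that pullbacks in spectra are pushouts, does not help. The square of suspension spectra is not cartesian before applying $P_n$, and $P_n$ does not make it cartesian. The same objection defeats your cell induction for $F\simeq\lim_nP_nF$. Also, high connectivity of the layers alone only shows that the tower converges to its own limit, not to $F$.

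The missing idea is to replace exact gluing by a connectivity estimate that grows with $n$, and then use Goodwillie's agreement criterion. The paper shows that for any finite $d$-dimensional $f\colon B\to X$ the map $\Sigma^\infty\big(\Gamma(B;f^*Y)\times_{\Gamma(B;f^*E)}\Gamma(E)\big)\to A_n(B,Y)$ is $(n+1)(\conn(Y/E)-d)$-connected (\cref{proposition:order-n-agreement}). This is proved by induction on $d$.
\begin{itemize}
\item \emph{Base case $d=0$.} It uses the splitting of $\Sigma^\infty$ of a finite product. The discarded summands with $|S|>n$ are $((n+1)\conn(Y/E)-1)$-connected. So the base case is not the trivial case of a single point.
\item \emph{Inductive step.} Write $B$ as the terminal vertex of a strongly cocartesian $(n+1)$-cube of lower-dimensional punctured-cell complexes. $A_n(-,Y)$ turns this cube into a cartesian one. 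The corresponding cube of section spaces is strongly cartesian, and its edges are $(\conn(Y/E)-d)$-connected by \cref{lemma:crude-connectivity}. Blakers--Massey then makes its suspension spectrum $(n+1)(\conn(Y/E)-d)$-cartesian.
\end{itemize}
Taking $B=X$ gives an agreement of order $n$. Then \cite[Proposition~1.6]{Goodwillie3} yields both $P_nF\simeq A_n(X,-)$ and convergence when $\conn(Y/E)>\dim X$.

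Note also the pullback along $\Gamma(E)\to\Gamma(B;f^*E)$, which your formulation with $\Gamma(X';Y)$, $\Gamma(\partial D;Y)$, $\Gamma(D;Y)$ lacks. Since $\Gamma(E|_B)\to\Gamma(E|_A)$ is a retract of $\Gamma(Y|_B)\to\Gamma(Y|_A)$, the latter is no more connected than the former, which can be arbitrarily poor. In that case Blakers--Massey gives nothing. Keeping $\Gamma(E)$ fixed is also why $\Gamma(E)$ enters the derivative through $\mathrm{ev}$.
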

We prove this as \cref{theorem:goodwillie-approximation} in the body of this article. Although slightly cumbersome to write down, the Goodwillie polynomial approximations are made very explicit in \cref{definition:An}. For mapping spaces, this theorem was obtained by Arone in \cite[Theorem~2]{Greg}. For the section space functor $\catS_{X//X} \ni Y \mapsto \Sigma^\infty \Gamma(X;Y)$ based at the ``origin" $X \in \catS_{X//X}$ the theorem follows from work of Malkiewich \cite[\S~3]{Malkiewich} as explained in \cite[\S~4.4]{weissderivatives}. Our original contribution lies in allowing us to base the Goodwillie tower at \emph{any} space $E \in \catS_{X//X}$, and simplifying Arone's proof using the bifunctoriality of section spaces.

\subsection{Motivations and applications}

\subsubsection{Twisted coefficients}
Recall that $\cM(\cL) \subset H^0(X,\cL)$ is the space of non-singular global sections of a very ample line bundle $\cL$ on a smooth projective complex variety $X$. Let 
\[
    \cZ_1(\cL) = \set{(s,x) \in \cM(\cL) \times X}{s(x) = 0}
\]
be the incidence variety. It is a fibre bundle $\cZ_1(\cL) \to \cM(\cL)$ with fibre over $s$ given by the smooth hypersurface $V(s)$. In particular, the monodromy of this bundle gives a local system on $\cM(\cL)$:
\[
    \pi_1(\cM(\cL);s) \lra \operatorname{Aut} H^*(V(s);\bZ).
\]
The fundamental group of $\cM(\cL)$ is generally quite complicated, see \cite{Lonne} for a presentation when $X = \bP^n$. It is nonetheless interesting and natural to ask for a computation of the cohomology of $\cM(\cL)$ with coefficients twisted in the local system above:
\[
    H^*\left(\cM(\cL); \cH^*(V(s);\bQ)^{\otimes n}\right) \cong \ ?
\]
The trivial case ($n = 0$) was computed in a range of degrees in \cite[Corollary~8.1]{hprinciple}. The simplest twisted case ($n = 1$) was treated in a range of degrees in \cite[\S~1.1]{withronno} using the isomorphism
\[
    H^*(\cM(\cL); \cH^*(V(s);\bQ)) \cong H^*(\cZ_1(\cL);\bQ).
\]
For higher tensor powers ($n \geq 2$), and more generally for coefficients twisted by any Schur functor, one can instead rely on a technique of Randal-Williams \cite[\S~5.3]{OscarTwisted}. Roughly, this technique first requires a computation of the cohomology of the moduli of maps to an Eilenberg--MacLane space $K(\bQ,\ell)$, uses
\[
    H^*(\Map(V(s), K(\bQ,\ell)); \bQ) \cong \Sym \left(\bigoplus_{i > 0} H^{\ell -i}(V(s);\bQ)^\vee[i]\right),
\]
and then exploits the Serre spectral sequence of the projection $\cM(\cL; K(\bQ,\ell)) \to \cM(\cL)$. Explicit computations, following this technique and relying on \cref{maintheorem:scanning}, will appear elsewhere in forthcoming work of the author and Ronno Das.

\subsubsection{Families of maps to projective space}

Let $\bP^m$ denote a complex projective space. Let $B$ be a space. A map $f \colon B \to \bP^m$ is said to have degree $\deg f = f^*(h) \in H^2(B;\bZ)$, where $h \in H^2(\bP^m;\bZ) \cong \bZ$ is the positive generator. We write $\Map_\alpha(B,\bP^m) \subset \Map(B,\bP^m)$ for the subspace of maps of degree $\alpha \in H^2(B;\bZ)$. Let $X$ be a smooth projective variety of (complex) dimension at least $4$, and $\cL$ be a very ample line bundle on $X$. Let $\alpha \in H^2(X;\bZ)$. By the Lefschetz hyperplane theorem $H^2(V(s);\bZ) \cong H^2(X;\bZ)$ for any non-singular section $s \in \cM(\cL)$, hence we may speak of maps $V(s) \to \bP^m$ of degree $\alpha$. In this setting, there are two natural moduli spaces of maps to $\bP^m$. The first one is the moduli space of continuous maps of degree $\alpha$:
\[
    \cM_\alpha(\cL; \bP^m) = \set{(s,g)}{s \in \cM(\cL), \ g \in \Map_\alpha(V(s),\bP^m)}.
\]
The second is the moduli space of algebraic (equivalently holomorphic) maps:
\[
    \cM_\alpha^\mathrm{alg}(\cL; \bP^m) = \set{(s,g)}{s \in \cM(\cL), \ g \in \Hol_\alpha(V(s),\bP^m)}.
\]
This latter space was studied in \cite{familiesofmaps} where it is shown that the natural inclusion
\[
    \cM_\alpha^\mathrm{alg}(\cL; \bP^m) \hookrightarrow \cM_\alpha(\cL; \bP^m)
\]
induces an isomorphism in homology in a range of degrees depending on $\alpha$, see \cite[Main Theorem]{familiesofmaps} and \cite[Examples~3.7 and~3.12]{familiesofmaps}.
\begin{theorem}\label{theorem:familiesofmaps}
Let $X$ be a smooth projective complex variety of dimension at least 4 with first cohomology $H^1(X;\bZ) = 0$. Let $\cL$ be a $d$-jet ample line bundle on $X$. Let $\alpha \in H^2(X;\bZ)$ and choose a complex line bundle $\cT_\alpha$ on $X$ with first Chern class $c_1(\cT_\alpha) = \alpha$. Then, for any integer $m \geq \dim X$, there is an isomorphism of cohomology rings
\[
    H^*(\cM_\alpha^\mathrm{alg}(\cL; \bP^m);\bZ) \cong H^*(\Gamma(S(\cL) *_X (S(\Omega^1_X \otimes \cL) \times_X S(\cT_\alpha^{\oplus m+1})))\sslash S^1; \bZ)
\]
in degrees $* < \min\big( \frac{d-3}{2}, (2m-2\dim X + 3)d(\alpha) + 2m-2\dim X -1\big)$, where $d(\alpha)$ is as in \cite[Definition~3.5]{familiesofmaps} and $S^1 \subset \bC^\times$ acts on the section space by the scalar action on $\cT_\alpha^{\oplus m+1}$.
\end{theorem}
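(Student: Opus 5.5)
The plan is to combine the comparison of \cite[Main Theorem]{familiesofmaps} with \cref{maintheorem:scanning}, applied to a fibration $Y \to X$ built from $\bP^m$. Fix $\cT_\alpha$ with $c_1(\cT_\alpha)=\alpha$. A continuous map $g\colon V(s)\to\bP^m$ of degree $\alpha$ pulls back $\cO(-1)$ to a line bundle isomorphic to $\cT_\alpha^{-1}|_{V(s)}$; here $H^1(X;\bZ)=0$ and Lefschetz identify line bundles on $V(s)$ with classes in $H^2(X;\bZ)$. Choosing such an isomorphism, $g$ becomes a nowhere-vanishing section of $\cT_\alpha^{\oplus m+1}|_{V(s)}$, unique up to the $\bC^\times$-action. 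Retracting $\bC^\times$ to $S^1$ and nonzero vectors to unit vectors, I will show there is a weak equivalence
\[
\cM_\alpha(\cL;\bP^m)\simeq \cM(\cL;S(\cT_\alpha^{\oplus m+1}))\sslash S^1,
\]
with $S^1$ acting by scalars on the sphere bundle. The point is that $\Map_\alpha(V,\bP^m)\simeq \Gamma(S(\cT_\alpha^{\oplus m+1})|_V)\sslash S^1$, since the $S^1$-action on sections is free and the quotient classifies exactly the maps of degree $\alpha$. This holds fibrewise over $\cM(\cL)$, so it globalises to the displayed equivalence.

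Next I apply \cref{maintheorem:scanning} with $Y=S(\cT_\alpha^{\oplus m+1})\to X$. Its fibre is $S^{2m+1}$, which is $2m$-connected. The hypothesis requires $(2\dim X-2)$-connectivity, and this holds since $m\ge \dim X$. A section exists, for instance by obstruction theory, since the fibre connectivity exceeds $\dim_\bR X$. The scanning map of \cref{theorem:main} is natural in $Y$, so it is $S^1$-equivariant for the scalar action. Passing to homotopy orbits, i.e.\ comparing the Borel fibrations over $BS^1$ by the Serre spectral sequence with fibre the isomorphism of \cref{maintheorem:scanning}, gives an isomorphism
\[
H^*\bigl(\Gamma(S(\cL)*_X(S(\Omega^1_X\otimes\cL)\times_X Y))\sslash S^1\bigr)\cong H^*\bigl(\cM(\cL;Y)\sslash S^1\bigr)
\]
in degrees $*<\frac{d-3}{2}$. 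This is an isomorphism of rings because it is induced by a map of spaces.

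Finally I compose with the inclusion $\cM_\alpha^{\mathrm{alg}}(\cL;\bP^m)\hookrightarrow\cM_\alpha(\cL;\bP^m)$. By \cite{familiesofmaps}, together with \cite[Examples~3.7 and~3.12]{familiesofmaps} for the explicit range, it is a cohomology isomorphism in degrees $*<(2m-2\dim X+3)d(\alpha)+2m-2\dim X-1$. Taking the minimum of the two ranges gives the statement.

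The main obstacle is making the $S^1$-equivariance rigorous. The scanning map of \cref{theorem:main} is defined on a fattened model of $\cM(\cL;Y)$, so I must check that the fattening and the zigzag of equivalences are compatible with the fibrewise action on $Y$. One must also check that the identification with $\cM_\alpha(\cL;\bP^m)$ works in families, and not only fibre by fibre. My first attempt would use the naturality of the construction in $Y\in\catS_{X//X}$. One subtlety is that the scalar action does not fix the section, so I would instead view it as an action on the whole functor and verify equivariance directly from the formula for the scanning map.
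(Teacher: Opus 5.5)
Your route is the paper's: identify $\cM_\alpha(\cL;\bP^m)$ with $\cM(\cL;S(\cT_\alpha^{\oplus m+1}))\sslash S^1$, apply \cref{maintheorem:scanning} to $Y=S(\cT_\alpha^{\oplus m+1})$, pass to $S^1$-homotopy orbits, and compose with \cite[Main Theorem]{familiesofmaps}. Your connectivity and existence-of-section checks are fine; the obstruction argument only needs fibre connectivity $2m\geq\dim_\bR X$, not strictly greater.

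The first step, however, is misjustified, and it is exactly where the hypothesis $H^1(X;\bZ)=0$ is needed. You do not need that hypothesis to identify line bundles. Complex line bundles on $V(s)$ are classified by $c_1$, and Lefschetz alone gives $H^2(V(s);\bZ)\cong H^2(X;\bZ)$. What is false is that the lift of $g$ to a nowhere-vanishing section is unique up to $\bC^\times$. The choices of isomorphism $g^*\cO(1)\cong\cT_\alpha|_{V(s)}$ form a torsor for the whole gauge group $\Map(V(s),\bC^\times)\simeq\Map(V(s),S^1)$. So $\Gamma(S(\cT_\alpha^{\oplus m+1})|_{V(s)})\to\Map_\alpha(V(s),\bP^m)$ is a principal fibration with group $\Map(V(s),S^1)$; this is the fibration the paper uses. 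Since $\Map(V(s),S^1)\simeq S^1\times H^1(V(s);\bZ)$, the Borel quotient by the constant circle is a covering of $\Map_\alpha(V(s),\bP^m)$ with discrete fibre $H^1(V(s);\bZ)$. It is an equivalence precisely because $H^1(V(s);\bZ)\cong H^1(X;\bZ)=0$, again by Lefschetz. With that corrected, the $S^1$-invariant map $(s,g)\mapsto(s,[g])$ over $\cM(\cL)$ globalises the fibrewise equivalence as you say.

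The paper's proof leaves equivariance implicit. Verifying it ``directly from the formula'' of the scanning map cannot work as stated. The target $(N\sqcup S(J^1\cL))\wedge_{S(J^1\cL)}p^*(S^\cL\wedge_X Y)$ is formed by collapsing the section of $Y$, and the scalar action moves that section. Use instead the model $\frac{S(J^1\cL)\times_X Y}{\sim}$ of \cref{section:equivalence}. The scanning map into it is $(x,(s,g))\mapsto[\Psi(s,x),g(x)]$, read as the collapsed point when $x\notin\tub V(s)$. It involves neither the section nor $\can$, so it is strictly $S^1$-equivariant. Composed with the (non-equivariant) equivalence of \cref{lemma:ansazt-nice-formula}, it becomes $\Psi(s,x)\wedge\can(s,x)\wedge g(x)$, because $\can(s,x)=\mathrm{vec}(\Psi(s,x))$. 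Hence it inherits the homology connectivity of \cref{theorem:main}.

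The restriction $\cM^{\mathrm{tub}}(\cL;Y)\to\cM(\cL;Y)$ and the collapse map of \cref{lemma:collapse-to-join} are also equivariant. You therefore get a zigzag of $S^1$-equivariant maps whose underlying maps are equivalences or homology isomorphisms in the range. For the Serre comparison over $BS^1$, use the homological form of \cref{theorem:main}: isomorphism below $\frac{d-3}{2}$ and surjection at $\frac{d-3}{2}$. Convert to cohomology by universal coefficients at the end.
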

The number $d(\alpha)$ is hard to compute in general. However, by \cite[Example~3.7]{familiesofmaps} it is at least equal to the maximal $k$ such that all algebraic line bundles on $X$ with first Chern class $\alpha$ are $k$-very ample. For the easiest example, consider $X = \bP^n$ and $\alpha \in H^2(\bP^n;\bZ)$. Then $d(\alpha)$ is at least $\deg \alpha$, with $\deg \colon H^2(\bP^n;\bZ) \cong \bZ$ the degree isomorphism.

\begin{proof}[Proof of \cref{theorem:familiesofmaps}]
\Cref{maintheorem:scanning} applies to the moduli space $\cM(\cL,S(\cT_\alpha^{\oplus m+1}))$ and shows that a scanning map
\[
    \cM(\cL,S(\cT_\alpha^{\oplus m+1})) \lra \Gamma(S(\cL) *_X (S(\Omega^1_X \otimes \cL) \times_X S(\cT_\alpha^{\oplus m+1})))
\]
induces an isomorphism in integral homology in the range of degrees $* < \frac{d-5}{2}$. There is a principal fibration
\[
    \Map(V(s),S^1) \lra \Gammacon(V(s); S(\cT_\alpha^{\oplus m+1})) \lra \Map_\alpha(V(s), \bP^m)
\]
for any hypersurface $V(s)$, see e.g. \cite[\S~4.2]{familiesofmaps}. By assumption $H^1(V(s);\bZ) = H^1(X;\bZ) = 0$, so $\Map(V(s),S^1) \simeq S^1$. Hence
\[
    \cM_\alpha(\cL;\bP^m) \simeq \cM(\cL,S(\cT_\alpha^{\oplus m+1})) \sslash S^1.
\]
The theorem then follows by \cite[Main Theorem]{familiesofmaps} which compares the homologies of $\cM_\alpha(\cL;\bP^m)$ and $\cM_\alpha^\mathrm{alg}(\cL; \bP^m)$ in a range of degrees.
\end{proof}

\subsection{Outline}
In \cref{section:definitions} we give the formal definition of the moduli space of sections, define the scanning map, and precisely state our main theorem. Then in \cref{section:goodwillie} we fully compute the Goodwillie tower of section spaces based anywhere. This section is purely in the realm of homotopy theory and the reader can either jump straight to it (if only interested in \cref{maintheorem:goodwillie}) or take it as a black box (if only concerned with its application to moduli spaces). We prove our main theorem in \cref{section:comparison,section:equivalence}. We finish with a short proof of concept of our method in \cref{section:POC}.

\subsection*{Shortcomings}
The author would like to apologise for the use of point-set models in the arguments. It seems that clean $\infty$-categorical formulas for the Goodwillie tower of section spaces should be possible to write down. However, the author was not successful in comparing the abstract formulas with the more concrete topological spaces that are the subject of this article. A middle-ground was chosen for concreteness, but some arguments are unfortunately cumbersome.

\subsection*{Acknowledgments}
I am happy to thank Greg Arone for several helpful discussions around Goodwillie calculus for mapping spaces, and Oscar Harr for technical help with sheaves. Thanks to Ronno Das for an ongoing collaboration which influenced the writing of this article. This article was written in January 2026 and I gave talks about it in Utrecht and Lisbon. I thank the audiences for some helpful comments. I was supported by Dan Petersen's Wallenberg Scholar fellowship. OpenAI's LLM was used for final proofreading.

\section{The moduli space of maps from smooth hypersurfaces}\label{section:definitions}

\subsection{Recollections}
Let $X$ be a smooth projective complex variety, $\cL$ be a very ample line bundle on $X$. We will write $\Gammahol(-)$, resp. $\Gamma(-)$, for the space of global holomorphic, resp. continuous, sections, and $\mathrm{d}(-)$ for the derivative of a section. Let
\begin{equation}\label{eqn:def-moduli}
    \cM(\cL) = \set{s \in \Gammahol(X,\cL)}{(s,\mathrm{d}s)(x) \neq (0,0) \ \forall x \in X}
\end{equation}
be the moduli of (equations\footnote{It might be more geometrically meaningful to consider the locus of smooth hypersurfaces $\cM(\cL) / \bC^\times \subset |\cL|$ but it is technically preferable to work with the space of sections. However the comparison map in \cref{theorem:main} is $\bC^\times$-equivariant (choosing $\varepsilon(s)$ to be equivariant), so it suffices to pass to orbits in that statement to study the quotient $\cM(\cL) / \bC^\times$.} of) smooth hypersurfaces in the linear system $|\cL|$. We shall now recall how its topology can be studied using an h-principle. Let $\Omega^1_X$ denote the cotangent bundle of $X$. The first jet bundle of $\cL$ is an algebraic vector bundle $J^1\cL$ on $X$, whose underlying smooth complex vector bundle is isomorphic to the direct sum
\[
    J^1\cL \cong \cL \oplus (\Omega^1_X \otimes \cL).
\]
We refer to \cite[IV.16.7]{EGA4} for a construction. Upon taking global holomorphic sections and choosing a smooth connection, one defines the jet map:
\begin{align*}
    j^1 \colon \Gammahol(\cL) &\lra \Gamma(J^1\cL) \cong \Gamma(\cL) \oplus \Gamma(\Omega^1_X \otimes \cL) \\
        s &\longmapsto (s,\mathrm{d}s).
\end{align*}
To give ranges in our results, we recall the definition of jet ampleness of a line bundle:
\begin{definition}[Compare \cite{jetampleness}]\label{definition:jet-ampleness}
Let $k \geq 0$ be an integer. Let $x_1, \ldots, x_t$ be $t$ distinct points in $X$ and $(k_1,\ldots,k_t)$ be a $t$-tuple of positive integers such that $\sum_i k_i = k+1$. Denote by $\cO$ the structure sheaf of $X$ and by $\fm_i$ the maximal ideal sheaf corresponding to $x_i$. We regard the tensor product $\otimes_{i=1}^t \fm_i^{k_i}$ as a subsheaf of $\cO$ under the multiplication map $\otimes_{i=1}^t \fm_i^{k_i} \to \cO$. We say that $\cL$ is \emph{$k$-jet ample} if the evaluation map
\[
    \Gammahol\left(\cL\right) \lra \bigoplus_{i=1}^t \Gammahol\left(\cL \otimes \left(\cO / \fm_i^{k_i}\right)\right)
\]
is surjective for any $x_1,\ldots,x_t$ and $k_1,\ldots,k_t$ as above.
\end{definition}
\begin{example}
If $\cL$ is very ample, then the $k$th tensor power $\cL^{\otimes k}$ is $k$-jet ample, see~\cite[Proposition 2.3]{jetampleness}.
\end{example}

To motivate what will follow, we recall one of the main results of \cite{hprinciple} which studies the effect of the jet map on homology:
\begin{theorem}[{\cite[Corollary~8.1]{hprinciple}}]\label{theorem:old-hprinciple}
Suppose that $\cL$ is $d$-jet ample. Then the jet map
\[
    j^1 \colon \cM(\cL) \lra \Gamma(J^1\cL - 0)
\]
induces an isomorphism in integral homology in the range of degrees $* < \frac{d-1}{2}$.
\end{theorem}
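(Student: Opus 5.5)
This is Theorem~\ref{theorem:old-hprinciple}, quoted from \cite[Corollary~8.1]{hprinciple}, so the plan is to reconstruct the Vassiliev-type argument used there. The idea is to pass to complements in finite-dimensional approximations and then compare the two sides by Alexander duality. The space $\cM(\cL)$ is the complement in the vector space $\Gammahol(\cL)$ of the discriminant $\Sigma$ of singular sections. Likewise, $\Gamma(J^1\cL - 0)$ is the complement, in the space of continuous sections of $J^1\cL$, of the sections that vanish somewhere. By Alexander duality, $\tilde H^*(\cM(\cL))$ is identified with a shift of the Borel--Moore homology of $\Sigma$. I would construct Vassiliev's simplicial resolution $\tilde\Sigma \to \Sigma$, which is proper with contractible fibres. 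Its points are pairs consisting of a section $s$ and a point of a simplex spanned by finitely many singular points of $s$.

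\begin{enumerate}
\item \textbf{Filtering the resolution.} Filter $\tilde\Sigma$ by the number $t$ of singular points in the simplex. The $t$-th stratum fibres over $\UConf_t(X)$. Its fibre is the open simplex, twisted by the sign local system, times the affine subspace of sections singular at $x_1,\dots,x_t$. This uses $d$-jet ampleness: for $2t \le d+1$, taking $k_i = 2$ in Definition~\ref{definition:jet-ampleness}, evaluation of $1$-jets at the $t$ points is surjective. So this subspace has codimension exactly $t\cdot\operatorname{rk} J^1\cL$.
\item \textbf{The spectral sequence.} Obtain a spectral sequence computing the Borel--Moore homology of $\Sigma$. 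In the range $t \le (d+1)/2$ its terms are Borel--Moore homology of $\UConf_t(X)$ with twisted coefficients, shifted by the codimension.
\item \textbf{The continuous side.} Run the same construction in a finite-dimensional approximation of $\Gamma(J^1\cL)$, or use a stable version via section-space duality. Here the evaluation of $1$-jets is surjective for every $t$, and the strata have the same description.
\item \textbf{Comparison.} The jet map $j^1$ is compatible with the two resolutions and induces an isomorphism on the strata with $t \le (d+1)/2$. The higher strata contribute only in degrees beyond a bound of the form $t(2\operatorname{rk}_\bC J^1\cL - 2\dim X) - t + \dots$, which is at least $\sim t$. Comparing the spectral sequences then gives an isomorphism in degrees $* < \frac{d-1}{2}$.
\end{enumerate}

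The main obstacle is the continuous side. The section space is infinite-dimensional and Alexander duality does not apply directly. One must choose good finite-dimensional approximations, for instance sections with values in $\Gammahol(\cL^{\otimes N})$ for $N \gg 0$, or polynomial approximations. These approximations must converge to $\Gamma(J^1\cL - 0)$ in homology and be compatible with the jet map and the resolution. I would first try approximating by holomorphic sections of large jet-ampleness, where Step~1 holds for all $t$ in a growing range, and then pass to the colimit.

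A secondary difficulty is the precise degree count for where the strata with $t > (d+1)/2$ begin. This bookkeeping is what yields the bound $\frac{d-1}{2}$.
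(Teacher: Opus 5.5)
The paper does not reprove this statement. It quotes it from \cite[Corollary~8.1]{hprinciple}, remarking only that the proof there is a Vassiliev-type Alexander duality argument on the locus of singular sections, which is the route you outline.

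Your holomorphic half is sound, up to truncating the resolution, since singular loci can be positive-dimensional. For $2t\le d+1$, jet ampleness with all $k_i=2$ gives your stratum description. The locus of sections singular at some $t$ points then has complex codimension $\geq t$, and this controls the tail.

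The genuine gap is the continuous side, which is half of the argument and which your Step~3 only gestures at. The approximation you say you would try first cannot work. Holomorphic sections of $\cL^{\otimes N}$ are not sections of $J^1\cL$ at all. Holomorphic sections of $J^1\cL$ form one fixed finite-dimensional space, which is far from $\cC^0$-dense and has no interpolation property for large $t$. So the complements of their discriminants have no reason to compute $H_*(\Gamma(J^1\cL-0))$.

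What is needed is an increasing sequence of finite-dimensional \emph{real} subspaces $V_1\subset V_2\subset\cdots$ of $\Gamma(J^1\cL)$ with three properties:
\begin{enumerate}
\item each $V_N$ contains $j^1\Gammahol(\cL)$;
\item the union is $\cC^0$-dense, e.g.\ by Stone--Weierstrass;
\item evaluation $V_N\to\bigoplus_{i\le t}(J^1\cL)_{x_i}$ is surjective for all distinct $x_1,\dots,x_t$ with $t\le T_N$, where $T_N\to\infty$.
\end{enumerate}
Write $\Sigma_{V_N}$ for the sections in $V_N$ vanishing somewhere. Three consequences then follow, and your Step~4 needs all of them.

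First, $\Gamma(J^1\cL-0)$ is open in the Banach space $\Gamma(J^1\cL)$, so density gives a weak equivalence $\colim_N(V_N-\Sigma_{V_N})\simeq\Gamma(J^1\cL-0)$.

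Second, interpolation gives your stratum description for $t\le T_N$. It also shows that the locus of sections in $V_N$ vanishing at $\geq T_N$ points has real codimension $\geq 2T_N$. Hence the Vassiliev spectral sequences of the $V_N$ stabilise.

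Third, the resolution of the holomorphic discriminant is the pullback along $j^1$ of the resolution of $\Sigma_{V_N}$. For $2t\le d+1$, the sections singular at $x_1,\dots,x_t$ form the \emph{transverse} intersection of $j^1\Gammahol(\cL)$ with $\{v\in V_N : v(x_i)=0\}$. So under Alexander duality, restriction along $j^1$ is a Thom isomorphism on those strata.

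This third point is what produces a map of spectral sequences induced by $j^1$ that is an isomorphism on $E^1$ in the range. An independent ``section-space duality'' computation of $H_*(\Gamma(J^1\cL-0))$ would only give an abstract isomorphism of groups, not one induced by the jet map.
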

The point of that theorem resides in the fact that the continuous section space $\Gamma(J^1\cL - 0)$ is amenable to algebraic topology. We will now work towards adding the data of continuous maps to a background space, and stating the proper generalisation of the theorem above.

\subsection{The moduli of maps}

Let $q \colon Y \to X$ be a fibration. The main object under consideration in this article is
\begin{equation}\label{eqn:def-moduli-Y}
    \cM(\cL;Y) = \set{(s,g)}{s \in \cM(\cL), \ g \in \Gamma(Y|_{V(s)})}
\end{equation}
which we call the \emph{moduli of smooth hypersurfaces equipped with a section of $Y$}. To describe its topology, recall that we have a smooth fibre bundle (a direct consequence of Ehresmann's fibration theorem): 
\[
    \cZ_1(\cL) = \set{(s,x) \in \cM(\cL) \times X}{s(x) = 0} \lra \cM(\cL), \quad (s,x) \longmapsto s.
\]
Its local triviality can be used to topologise $\cM(\cL;Y)$ as follows: let $s \in U \subset \cM(\cL)$ be a small open neighbourhood of some $s$ over which the bundle $\cZ_1(\cL) \to \cM(\cL)$ is trivial. Then we have a homeomorphism $\theta \colon U \times V(s) \cong \cZ_1(\cL)|_U$, and we declare the induced bijection
\begin{align*}
    \cM(\cL;Y)|_U &\overset{\cong}{\lra} \set{(u,g) \in U \times \Map(V(s),Y)}{q \circ g = \tilde\theta(u,-)} \\
    (u,g) &\longmapsto (u,g \circ \theta(u,-))
\end{align*}
to be a homeomorphism, where the right-hand side is equipped with the subspace topology (the mapping space has the compact-open topology), and $\tilde\theta$ is the composition of $\theta$ and the inclusion $\cZ_1(\cL) \subset \cM(\cL) \times X$. By gluing local patches over a trivialising open cover we topologise the full moduli space.

\subsection{Scanning map and the main theorem}\label{subsection:scanning}

We pick once and for all a Hermitian metric on $J^1\cL$ and write $S(J^1\cL)$ for the unit sphere bundle. Let
\begin{equation*}
    \cM(\cL) \times X \lra S(J^1\cL) \ , \quad (s,x) \longmapsto j^1s(x)
\end{equation*}
be the jet map evaluation. Here, and everywhere else in this paper, we implicitly rescale any vector in $J^1\cL -0$ so that it lies on the unit sphere bundle. Given $(s,x) \in \cM(\cL) \times X$, if $s(x) = 0$ then the derivative $\mathrm{d}s(x) \neq 0$. Hence the jet evaluation map is transverse to $S(\Omega^1_X \otimes \cL) \subset S(J^1\cL)$. Let $N$ be the closed tubular neighbourhood of $S(\Omega^1_X \otimes \cL)$ defined by
\[
    N = \set{(v,w) \in S(\cL \oplus \Omega^1_X \otimes \cL)}{\lVert v \rVert \leq \lVert w \rVert},
\]
using a fixed splitting $\cL \oplus \Omega^1_X \otimes \cL \cong J^1\cL$ and writing $\lVert-\rVert$ for the fixed metric. By compactness of $X$, for any $s \in \cM(\cL)$ there exists a small enough $0 < \varepsilon(s) < 1$ such that
\[
    \tub V(s) = \set{x \in X}{\lVert s(x) \rVert \leq \varepsilon(s) \lVert \mathrm{d}s(x) \rVert}
\]
is a closed tubular neighbourhood of $V(s)$. We may choose $\varepsilon(s)$ to depend continuously on $s$, and define the rescaled jet evaluation map:
\begin{equation}\label{eqn:jet-evaluation-map}
    \Psi \colon \cM(\cL) \times X \lra S(J^1\cL) \ , \quad (s,x) \longmapsto \left( \frac{s(x)}{\varepsilon(s)}, \mathrm{d}s(x) \right) \bigg/ \left\lVert \left( \frac{s(x)}{\varepsilon(s)}, \mathrm{d}s(x) \right) \right\rVert.
\end{equation}
By construction, for any $s \in \cM(\cL)$, the intersection
\[
    (\{s\} \times X) \cap \Psi^{-1}(N)
\]
is the closed tubular neighbourhood $\tub V(s)$ of $V(s)$ in $X$. We will need to slightly enlarge our moduli space $\cM(\cL;Y)$, so we define
\[
    \cM^\mathrm{tub}(\cL;Y) = \set{(s,g)}{s \in \cM(\cL), \ g \in \Gamma(Y|_{\tub V(s)})},
\]
and topologise it similarly to $\cM(\cL;Y)$. By shrinking the tubular neighbourhood onto its zero section, we obtain a weak homotopy equivalence
\[
    \cM^\mathrm{tub}(\cL;Y) \overset{\simeq}{\lra} \cM(\cL;Y).
\]
Likewise, let
\[
    \pi \colon \cZ^\mathrm{tub}_1(\cL) = \set{(s, x) \in \cM(\cL) \times X}{x \in \tub V(s)} \lra \cM(\cL)
\]
be the fattened up universal smooth hypersurface bundle.

\bigskip

Let $S^\cL \to X$ be the sphere bundle obtained by fibrewise one-point compactifying the bundle $\cL$. We write $\partial N = N - \mathring{N}$ for the sphere bundle of the tubular neighbourhood, and $\rho \colon N \to X$ for the projection. To construct the comparison map in our main theorem, we first define a section of $\rho^*S^\cL \to N$. Let $\nu \colon \mathring{N} \to S(\Omega^1_X \otimes \cL)$ be the normal bundle projection map, and $p \colon S(J^1\cL) \to X$ be the projection. The isomorphism $J^1\cL \cong \cL \oplus (\Omega^1_X \otimes \cL) $ implies that the normal bundle $\mathring{N}$ is isomorphic to $(p^*\cL)|_{S(\Omega^1_X \otimes \cL)}$. Hence
\[
    \rho|_{\mathring{N}}^*\cL \cong (p \circ \nu)^*\cL \cong \nu^*\left((p^*\cL)|_{S(\Omega^1_X \otimes \cL)}\right) \cong \nu^*\mathring{N} \cong \mathring{N} \times_{S(\Omega^1_X \otimes \cL)} \mathring{N}.
\]
The diagonal $\mathring{N} \to \mathring{N} \times_{S(\Omega^1_X \otimes \cL)} \mathring{N}$ thus yields a section of $\rho|_{\mathring{N}}^*\cL$. We extend it to a section $N \to \rho^*S^{\cL}$ by sending the boundary $\partial N$ to the basepoint at infinity. 

Because the jet evaluation map~\eqref{eqn:jet-evaluation-map} sends the subspace $\cZ^\mathrm{tub}_1(\cL) \subset \cM(\cL) \times X$ into $N$, we obtain a pullback square:
\[
\begin{tikzcd}
|[label={[label distance=-2mm]-45:\lrcorner}]| \pi^*S^\cL \arrow[d] \arrow[r]            & \rho^*S^\cL \arrow[d] \\
\cZ^\mathrm{tub}_1(\cL) \arrow[r, "\Psi"] & N                    
\end{tikzcd}
\]
In particular, we may pull back the section defined above to a section
\begin{equation}\label{eqn:can-section}
    \can \colon \cZ^\mathrm{tub}_1(\cL) \lra \pi^*S^\cL.
\end{equation}
Informally, this canonical section associates to a pair $(s,x)$ the unique vector $\vec{v}$ corresponding to $x$ in the open tubular neighbourhood of $V(s)$, or the basepoint if $x \in \partial \tub V(s)$, see \cref{figure:tubular-vector} for an illustration of possible $\vec{v}$.

\begin{figure}
\[
\begin{tikzpicture}[scale=0.8]

\draw[red] (0,0) .. controls (0,2) and (4,2) .. (5,5);
\draw[blue] (2,-1) .. controls (2,1) and (6,1) .. (7,4);
\draw[blue] (-2,1) .. controls (-2,3) and (2,3) .. (3,6);

\draw [blue,decorate,decoration={brace,amplitude=20pt,raise=1ex}] (3,6) -- (7,4) node[midway,yshift=3em,xshift=3em]{$\tub V(s)$};
\node[red] at (0,-0.3) {$V(s)$};

\node at (4.5,4) {$\bullet$};
\node at (5.15,4) {$\vec{v} = 0$};

\node at (1.97,3.05) {$\bullet$};
\draw [->] (2.45,2.31) -- (2,3) node[right, midway, yshift=0.5em]{$\vec{v}$};

\node at (2.44,0) {$\bullet$};
\draw[->] (1.3,1.66) -- (2.41,0.05) node[right, midway]{$\vec{v} = \infty$};

\end{tikzpicture}
\]
\caption{Possible values of the section $\can$ restricted to some $\{s\} \times \tub V(s) \subset \cZ^\mathrm{tub}_1(\cL)$.}
\label{figure:tubular-vector}
\end{figure}

\bigskip

Let us now assume that the fibration $Y \to X$ has a section. One of the main insights of this article is the definition of the following ``scanning" map:
\begin{align}\label{eqn:scanning-map}
\begin{split}
    X \times \cM^\mathrm{tub}(\cL;Y) &\lra (N \sqcup S(J^1\cL)) \wedge_{S(J^1\cL)} p^*(S^\cL \wedge_X Y) \\
        x,(s,g) &\longmapsto  \begin{cases}
                                \Psi(s,x) \wedge \can(s,x) \wedge g(x) &\text{if } x \in \tub V(s) \\
                                \text{fibrewise basepoint} & \text{if } x \not\in \tub V(s).
                              \end{cases}
\end{split}
\end{align}
Let us comment on our notation: we write $- \wedge_{S(J^1\cL)}-$ for the fibrewise smash product of retractive spaces over $S(J^1\cL)$, see $N \sqcup S(J^1\cL) \to S(J^1\cL)$ as a retractive space via the section on the second summand, and see $S^\cL$ being pointed at infinity. When $x \in \tub V(s)$, we have $\Psi(s,x) \in N \subset N \sqcup S(J^1\cL)$ (i.e. not the fibrewise basepoint). By design, $\can(s,x) \in (\pi^*S^\cL)|_{(s,x)} = S^\cL|_x = (p^*S^\cL)|_{\Psi(s,x)}$ so that our map is well-defined. Continuity is ensured by the fact that $\can$ maps the boundary $\partial \tub V(s)$ to the fibrewise basepoints. We are now ready to state our main theorem:
\begin{theorem}\label{theorem:main}
Let $Y \to X$ be a well-sectioned fibration. Suppose that $\cL$ is $d$-jet ample, and that $Y \to X$ is at least $(\dim_\bR(X) - 1)$-connected. Then the adjoint of the scanning map~\eqref{eqn:scanning-map}
\begin{equation}\label{eqn:main-map}
    \cM(\cL;Y) \simeq \cM^\mathrm{tub}(\cL;Y) \lra \Gamma\left(X; (N \sqcup S(J^1\cL)) \wedge_{S(J^1\cL)} p^*(S^\cL \wedge_X Y)\right)
\end{equation}
induces an isomorphism in homology in the range of degrees $* < \frac{d-3}{2}$ and a surjection in degrees $* \leq \frac{d-3}{2}$.
\end{theorem}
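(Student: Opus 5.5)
We follow the strategy of the introduction: view both sides of \eqref{eqn:main-map} as functors of the retractive space $Y \in \catS_{X//X}$ and compare their Goodwillie towers. Set
\[
F(Y) = \Sigma^\infty_+\cM^\mathrm{tub}(\cL;Y), \qquad G(Y) = \Sigma^\infty_+\Gamma\big(X; (N \sqcup S(J^1\cL)) \wedge_{S(J^1\cL)} p^*(S^\cL \wedge_X Y)\big).
\]
The scanning map \eqref{eqn:scanning-map} is natural in $Y$, so it gives a natural transformation $F \to G$. The plan has four steps.

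\textbf{Base case.} At $Y = X$ the fibrewise smash $S^\cL \wedge_X X$ collapses to $X$, so the target becomes $\Gamma(X; N \sqcup S(J^1\cL) \text{ modulo } S(J^1\cL))$. This should be equivalent to $\Gamma(S(J^1\cL))\simeq \Gamma(J^1\cL - 0)$: the section space of the pair $(S(J^1\cL), \partial N)$-type cofibre is equivalent to the full sphere bundle section space by the usual excision/Pontryagin--Thom argument. Under this identification the map becomes the jet map of \cref{theorem:old-hprinciple}. That theorem gives the $0$th layer in the range $*<\frac{d-1}{2}$.

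\textbf{Convergence.} The fibre of $Y\to X$ is $(\dim_\bR X-2)$-connected and $\cM^\mathrm{tub}(\cL;Y)\to\cM(\cL)$ has fibres $\Gamma(Y|_{V(s)})$ with $\dim V(s) = \dim_\bR X - 2$. Applying \cref{maintheorem:goodwillie} fibrewise over $\cM(\cL)$ should show that the tower of $F$ converges. For $G$, the target fibration $p^*(S^\cL\wedge_X Y)\to S(J^1\cL)$ has fibres $S^2\wedge Y_x$, which are $(\dim_\bR X)$-connected. This is enough for \cref{maintheorem:goodwillie} over the base $X$, after rewriting the section space as a section space over $X$ based at $E=N\sqcup S(J^1\cL)$. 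The connectivity of $D_n$ in both towers grows linearly in $n$, so it suffices to compare the layers $D_n F\to D_n G$ for each $n\ge1$ in the range $*<\frac{d-3}{2}$.

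\textbf{Layers.} By \cref{maintheorem:goodwillie}, $D_nG(Y)$ is the homotopy orbits
\[
\big(\text{derivative}\wedge (\text{fibres of } Y)^{\wedge n}\big)_{h\Sigma_n},
\]
where the derivative is expressed through $\UConf_n(X)$, sections of $S(J^1\cL)$, and one Thom twist $S^\cL$ per point. The factor $S^\cL$, together with $N$ being a tubular neighbourhood of $S(\Omega^1_X\otimes\cL)$, should Pontryagin--Thom collapse this to a section space over $\UConf_n(X)$ of jet data that vanish at the $n$ points with nonzero derivative there. Fibrewise Arone for $\Gamma(Y|_{V(s)})$ gives
\[
D_nF(Y)\simeq \Big(\Sigma^\infty_+\{(s,x_1,\dots,x_n): s\in\cM(\cL),\ x_i\in V(s)\ \text{distinct}\}^{-TV(s)^n}\wedge Y^{\wedge n}\Big)_{h\Sigma_n}.
\]
Here $\cZ_n(\cL)$ denotes the space of $n$ distinct marked points on $V(s)$, and the twist by the normal bundles of $V(s)$ in $X$ accounts for the difference between $TX$ and $TV$. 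The comparison of layers then reduces to an $h$-principle for this marked moduli space $\cZ_n(\cL)\to\UConf_n(X)$: the jet map from sections vanishing at $n$ prescribed points to continuous nonvanishing jets with prescribed zeros. We prove this as in \cite{hprinciple}, using Vassiliev's method fibrewise over $\UConf_n(X)$. Jet ampleness ensures the interpolation conditions at the $n$ points cost $O(n)$ in the range, and the resulting range $\frac{d-3}{2}$, with the loss coming from the marked points, must be shown uniform in $n$. Because $D_n$ is $n\cdot\mathrm{conn}(Y)$-connected, only finitely many $n$ matter in any fixed degree; this is one way to secure that uniformity.

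\textbf{Main obstacle.} The hard part is the point-set identification of $D_nG$ with the Thom-collapsed jet section space, and checking that the induced map on layers is exactly the marked jet map. This means tracking $\can$, $\Psi$ and the tubular neighbourhoods compatibly across all $n$; only then can the marked $h$-principle be invoked. Finally, we pass from the tower comparison to the map on homology, obtaining the isomorphism for $*<\frac{d-3}{2}$ and a surjection at $*=\frac{d-3}{2}$, by the five lemma up the finite towers.
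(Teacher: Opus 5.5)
Your architecture is the paper's. You compare the Goodwillie towers in $Y$, identify $P_0$ with the jet map $\cM(\cL)\to\Gamma(S(J^1\cL))$, get convergence from analyticity on both sides, and compare layers through an h-principle for sections with $n$ prescribed zeros.

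\textbf{The gap is the range.} The claim ``only finitely many $n$ matter in any fixed degree'' does not give uniformity. The number of layers relevant in degrees $<\frac{d-3}{2}$ grows with $d$, and each must individually be $\frac{d-3}{2}$-connected. Moreover, $D_n$ is not $n\cdot\conn(Y)$-connected: the twist $\bS^{-TV(s)^n}$ costs $n(\dim_\bR X-2)$. So when $\conn(Y/X)=\dim_\bR X-1$, both layers are only about $(n-1)$-connected. A marked h-principle losing $cn$ degrees with $c>1$, which is all that ``$O(n)$'' promises, would then give a range of only roughly $\frac{d}{2c}$.

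The paper imports the sharp statement that the marked jet map is homology $(\frac{d-3}{2}-n)$-connected (\cref{theorem:hprinciple-with-vanishing}, from \cite[Theorem~5.1]{withronno}), and then counts exactly. The layer map is induced by a fibrewise $(\frac{d-3}{2}-n)$-connected map over $\Conf_n(X)$, tensored with $\bS^{n\cL}$ ($+2n$), $(\Sigma^\infty_XY)^{\boxtimes n}$ ($+n\conn(Y/X)$) and $\bS^{-T\Conf_n(X)}$ ($-n\dim_\bR X$). Hence it is $(\frac{d-3}{2}+n(\conn(Y/X)+1-\dim_\bR X))$-connected (\cref{proposition:connectivity-map-differentials}). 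The hypothesis $\conn(Y/X)\geq\dim_\bR X-1$ is exactly what makes the coefficient of $n$ nonnegative. Your layer step never uses this hypothesis, which is the symptom of the missing bookkeeping.

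\textbf{The step you flag as the main obstacle.} The paper organises this differently from your sketch: the Thom collapse happens on the moduli side, not the section side. Under relative Atiyah duality, $\can$ implements the unit $\bS^0\to\tilde q^*\tilde q_!\bS^0$ of $\tilde q\colon\cZ_n(\cL)\to\Conf_n(X)$ (\cref{lemma:basechange-rewriting}). This rewrites $D_nF$ as $\Gamma_c(\Conf_n(X);q_!\bS^0\otimes\bS^{n\cL}\otimes(\Sigma^\infty_XY)^{\boxtimes n})_{h\Sigma_n}$. $D_nG$ is the same with $q_!\bS^0$ replaced by $p_!(\mathrm{ev}_!\bS^0\otimes(\Sigma^\infty_{S(J^1\cL),+}N)^{\boxtimes n})$. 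The layer map is induced by $(s,c)\mapsto(c,\Psi(s,-),\Psi(s,c))$ (\cref{proposition:final-identification-map-differentials}).

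Moreover, the fibres of $q_!\bS^0$ are suspension spectra of \emph{homotopy} fibres of $\cZ_n(\cL)\to\Conf_n(X)$, which is not a fibration. So an h-principle for the strict fibres $\{s: s(c)=0\}$ obtained ``fibrewise over $\UConf_n(X)$'' must be transported to homotopy fibres. The paper does this by double fibrewise suspension and \cite[Lemma~3.8]{withronno} in \cref{corollary:connectivity-fibrewise}.

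\textbf{Smaller points.}
\begin{itemize}
\item $\gamma(X)$ is literally $S(J^1\cL)$, so $P_0G\simeq\Sigma^\infty_+\Gamma(S(J^1\cL))$ needs no excision argument.
\item The section-space tower is based at $S(J^1\cL)$, not at $N\sqcup S(J^1\cL)$.
\item Transporting that tower along the functor $\gamma$ of \eqref{eqn:precomposition} requires $P_n(\gamma^*H)\simeq\gamma^*P_nH$ for $H=\Sigma^\infty_+\Gamma(X;-)$. This uses stable $n$-excisiveness of section spaces and the fact that $\gamma$ does not lower connectivity (\cref{lemma:pulled-back-tower}).
\end{itemize}
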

In that statement, we only assume that $Y \to X$ is well-sectioned (i.e. that it is an ex-fibration in the sense of \cite[Definition~8.1.1]{MaySigurdsson}) to guarantee the homotopy invariance of our formulas.

\begin{remark}
Even though $N \hookrightarrow S(J^1\cL)$ is not a fibration, the composition $\rho \colon N \hookrightarrow S(J^1\cL) \to X$ is a fibration. Hence $(N \sqcup S(J^1\cL)) \wedge_{S(J^1\cL)} p^*(S^\cL \wedge_X Y) \to X$ is also a fibration if $Y \to X$ is well-sectioned \cite[Proposition~8.2.1.(ii)]{MaySigurdsson}. In particular, the homotopy type of the section space of \cref{theorem:main} only depends on the homotopy type of that fibration. We will prove later in \cref{proposition:ansatz-is-join} that this ansatz formula can be replaced by the fibrewise join appearing in \cref{maintheorem:scanning}.
\end{remark}

\begin{remark}
We briefly comment on the range of degrees of \cref{theorem:main}. We do not believe it to be optimal, as it relies on inputs which are probably not optimal: the ranges appearing in \cref{theorem:old-hprinciple} and \cite[Theorem~B]{withronno}, and the jet-ampleness condition that could be replaced by a weaker assumption only on jets of order $\leq 1$.
\end{remark}

\section{Goodwillie tower of section spaces}\label{section:goodwillie}

\subsection{Recollections and conventions}\label{subsection:recollections}
Let us begin with a bit of notation that we shall use throughout this article.

\subsubsection{Categories and parameterised objects}
We denote the $\infty$-category of spaces by $\catS$, and the $\infty$-category of spectra by $\catSp$. When engaging in formal manipulations we will always work in the ``derived sense": pullbacks/pushouts/colimits/... are always the ``homotopy" versions, and we will add the word ``strict" if necessary to stress when we make point-set constructions. If $B \in \catS$ is a space, we write $\catS_{/B}$ for the $\infty$-category of spaces over $B$, and $\catS_{B//B}$ for the $\infty$-category of retractive spaces over $B$. The symmetric monoidal $\infty$-category of parameterised spectra over $B$ is denoted $\catSp^B = \mathsf{Fun}(B, \catSp)$. We denote the $k$-fold suspension-loop adjunction by
\[
    \Sigma^k_B \colon \catS_{B//B} \rightleftharpoons \catS_{B//B} \colon \Omega^k_B,
\]
and do similarly for the infinite suspension-loop adjunction. If $f \colon A \to B$ is a map of spaces, the pullback functors $f^* \colon \catSp^B \to \catSp^A$ and $f^* \colon \catS_{B//B} \to \catS_{A//A}$ have both a left and a right adjoint denoted $f_!$ and $f_*$ respectively. When $f \colon A \to *$ is the unique map to a point, we shall equivalently write
\[
    f_!(S) \simeq \colim_A S
\]
when applied to a parameterised spectrum $S \in \catSp^A$. Recall that by convention all our functors are derived: for example, if $f \colon A \to B$ and $S^0$ denotes the constant retractive space $S^0 \times A \to A$ then $f_!S^0$ is a retractive space over $B$ whose fibre at a point $b \in B$ is computed as the \emph{homotopy} fibre of $f$ at $b$ (with an added basepoint). We shall often use the projection formula $f_!(S\otimes f^*T) \simeq f_!(S) \otimes T$ for a general $f$. For a vector bundle $V \to A$, we write $S^V$ for the pointed (at infinity) sphere bundle obtained by fibrewise one-point compactifying. Its fibrewise suspension spectrum is denoted $\bS^V$, and this notation extends to virtual vector bundles, e.g. $\bS^{-V}$. Such spectra will typically appear when using the Costenoble--Waner duality isomorphism $f_*(-) \simeq f_!(- \otimes \bS^{-TA})$, with $f \colon A \to *$ and $A$ a closed manifold with tangent bundle $TA$. We refer to \cite[Appendix~A]{Markus} for proofs of all these facts.

If $S \in \catSp^A$ and $T \in \catSp^B$ are parameterised spectra over $A$ and $B$, their external tensor product $S \boxtimes T \in \catSp^{A \times B}$ is parameterised over the product $A \times B$: it is informally given by fibrewise taking tensor products. The tensor power $S^{\boxtimes k}$ is parameterised over $A^k$ and is equipped with the permutation action of the symmetric group $\Sigma_k$, hence can be refined to a parameterised spectrum over $A^k \sslash \Sigma_k$. We shall use the same notation for its refinement.

\subsubsection{Point-set conventions}
Let us also say a word about our category of topological spaces $\catTop$. We will always assume that a topological space is compactly generated and weakly Hausdorff. When working with retractive spaces, we shall often use the convenient notion of ex-fibrations from \cite[Definition~8.1.1]{MaySigurdsson}. We stress that we only appeal to these point-set technicalities so that our formulas have the correct homotopical properties, but any reader is invited to work with their favourite model categories as long as they present the same $\infty$-categories.

Secondly, we will often construct (parameterised) spectra via sequential spectra \cite[Definition~5.1.1]{MalkiewichLowTech}. The main point for us is that parameterised sequential spectra over $B$ with their levelwise model structure \cite[\S~5.2]{MalkiewichLowTech} present the infinity category $\catSp^B$, see \cite[Theorem~3.6]{ABG} and the paragraph above that theorem. We will mostly use this observation to compute the homotopy fibre of a levelwise fibration of sequential spectra as the strict fibre.

Thirdly, given a map $f \colon A \to B$ of topological spaces, we can define a sheaf of topological spaces on $B$ by the rule
\[
    B \overset{\text{open}}{\supset} U \longmapsto \Gamma(U; f^{-1}(U)),
\]
where the section space is equipped with the k-ification of its compact-open topology. Suppose now that $B$ is locally of singular shape \cite[Definition~A.4.15]{HA}, e.g. $B$ is a Euclidean neighbourhood retract (ENR), and has the homotopy type of a finite CW-complex. Then, by combining \cite[Theorem~3]{Milnor1959} and \cite[Example~A.4.12]{HA}, we see that the section space $\Gamma(U; f^{-1}(U))$ is weakly homotopy equivalent to the CW-complex $|\operatorname{Sing}\Gamma(U; f^{-1}(U))|$. Assuming furthermore that $B$ admits a basis of its topology which is stable under finite intersection and such that every basis element is paracompact, \cite[Proposition~7.1.3.14]{HTT} shows that $\Gamma(-; f^{-1}(-))$ is a sheaf of spaces in the $\infty$-categorical sense. The analogous argument applies to compactly supported sections, see \cite[Remark~5.5.6.4]{HA}. Assuming further that $f$ is an ex-fibration, we can define a sheaf of sequential spectra 
\[
    B \overset{\text{open}}{\supset} U \longmapsto \Gamma(U; \Sigma^k_Bf^{-1}(U)),
\]
where $\Sigma^k_B(-)$ is the fibrewise $k$-fold suspension. Arguing as above, this construction yields a sheaf of spectra. Lastly, when $B$ is locally of singular shape we shall sometimes conflate locally constant sheaves and parameterised spectra. A reference for this equivalence can be found in \cite[Theorem~A.4.19]{HA} for sheaves of spaces, and the spectrum valued case follows from the equivalence $\mathsf{Shv}(B;\catSp) \simeq \mathsf{Shv}(B;\catS) \otimes \catSp$.

\subsubsection{Goodwillie calculus}
Let us now recall some basic facts about Goodwillie calculus. We refer the unfamiliar reader to \cite[\S~1 and 2]{GoodwillieHandbook} for an introduction. We shall be concerned with finitary (i.e. filtered colimit preserving) functors $F \colon \catS_{B//B} \to \catSp$. The calculus provides a tower of functors $\catS_{B//B} \to \catSp$
\[
    F \lra \dotsc \lra P_nF \lra P_{n-1}F \lra \dots \lra P_1F \lra P_0F,
\]
such that $P_0F$ is constant and each $P_nF$ is $n$-excisive \cite[Definition~1.1.2]{GoodwillieHandbook}. The layers 
\[
    D_nF = \fib(P_nF \lra P_{n-1}F)
\]
are $n$-homogeneous \cite[Definition~1.2.1]{GoodwillieHandbook} and as such admit a very explicit formula
\[
    D_nF(Y) \simeq \colim_{B^n \sslash \Sigma_n}\left(\partial_nF \otimes (\Sigma^\infty_BY)^{\boxtimes n}\right)
\]
where $\colim_{B^n \sslash \Sigma_n}(-) \simeq r_!(-)$ for $r \colon B^n \sslash \Sigma_n \to *$, and $\partial_nF \in \catSp^{B^n \sslash \Sigma_n}$ is a parameterised spectrum over $B^n$ with $\Sigma_n$ action called the $n$th derivative of $F$ \cite[Theorem~1.2.3]{GoodwillieHandbook}. If $A \in \catS_{B//B}$, the Goodwillie tower of the functor
\[
    F_A \colon \catS_{A//A} \overset{\text{forget}}{\lra} \catS_{B//B} \overset{F}{\lra} \catSp
\]
is called the tower of $F$ based at $A$ \cite[Definition~1.1.15]{GoodwillieHandbook}. In particular $P_0F_A$ is constant equal to  $F(A)$.

\subsection{Construction}

Let $X$ be a finite dimensional CW-complex. Let $E \in \catS_{X//X}$ be a retractive space over $X$. By composing with the projection and section of $E \to X$, any retractive space $Y \in \catS_{E//E}$ over $E$ can be seen as a retractive space over $X$. We thus write $\Gamma(X;Y)$ for the section space of $Y \to X$, naturally pointed at the given section $X \to E \to Y$. The goal for the remainder of this section is to give a construction of the Goodwillie tower of the functor
\begin{align*}
    \catS_{E//E} &\lra \catSp \\
     Y &\longmapsto \Sigma^\infty \Gamma(X; Y).
\end{align*}
In other words, we will construct the tower of the section space functor $\Sigma^\infty\Gamma(X;-) \colon \catS_{X//X} \to \catSp$ based at $E$. Our strategy is to closely follow Arone's work on mapping spaces \cite{Greg} to guess what the tower should be, and then use the contravariant functoriality on $X$ to check inductively on $\dim X$ that our construction is correct. This latter part is very much inspired by Malkiewich's work \cite{Malkiewich}.

\bigskip

Let us begin with some notation. Let $p \colon E \to X$ and $\pi \colon Y \to E$ be ex-fibrations \cite[Definition~8.1.1]{MaySigurdsson}. Let $\Gamma(E) = \Gamma(X;E)$ be the space of global sections of $p$. We write
\[
    Y^{\ovwedge i} \lra E^i
\]
for the $i$-fold external smash product: if $Y_{e_j}$ denotes the fibre of $Y \to E$ above $e_j$, then $Y_{e_1} \wedge \cdots \wedge Y_{e_i}$ is the fibre of $Y^{\ovwedge i} \to E^i$ above $(e_1,\dotsc,e_i)$. It is also an ex-fibration by \cite[Proposition~8.2.3]{MaySigurdsson}. Let $f \colon B \to X$ be a map from a topological space $B$. We regard $B^i \times \Gamma(E)$ as a space over $f^*E^i$ via the evaluation map 
\begin{align*}
    B^i \times \Gamma(E) &\lra f^*E^i \\
    (b_1,\dotsc,b_i,s) &\longmapsto (f^*s(b_1),\dotsc,f^*s(b_i))
\end{align*}
(beware this is not a fibration in general, in fact it is not even surjective for $i > 1$). Given a retractive space $Z \to f^*E^i$, we write $Z /^{B^i} f^*E^i$ for the retractive space over $B^i$ obtained by collapsing the section of $Z$ fibrewise over $B^i$. Finally, we write $\Sigma^k_{B^i}(-)$ for the $k$-fold fibrewise suspension over $B^i$.

\begin{definition}\label{definition:An}
Let $\widetilde{A}_n(B,Y)$ be the sequential spectrum whose $k$th space is the space of tuples $(s_1,\dotsc,s_n)$ where $s_i$ is a section of
\[
    \cE_i^k(B,Y) := \Sigma^k_{B^i} \big( (f^*Y)^{\ovwedge i} \wedge_{f^*E^i} ((B^i \times \Gamma(E)) \sqcup f^*E^i) \big) /^{B^i} f^*E^i \lra B^i,
\]
and such that the squares
\[
\begin{tikzcd}
\cE_i^k(B,Y) \arrow[r] & \cE_j^k(B,Y) \\
B^i \arrow[u, "s_i"] \arrow[r]                                                       & B^j \arrow[u, "s_j"']                         
\end{tikzcd}
\]
commute for every surjection $\{1,\dotsc,j\} \twoheadrightarrow \{1,\dotsc,i\}$.
\end{definition}

Our construction is chosen so as to generalise Arone's version for mapping spaces \cite[Definition~7]{Greg}. We observe that we have a map
\[
    \Gamma(B;f^*Y) \times_{\Gamma(B;f^*E)} \Gamma(E) \lra \Gamma(B^i; \cE^0_i(B,Y))
\]
sending a pair of sections $(s,g)$ to the section
\begin{equation}\label{eqn:adjointmap}
    (b_1,\dotsc,b_i) \longmapsto s(b_1) \wedge \dots \wedge s(b_i) \wedge ((b_1,\dotsc,b_i), g).
\end{equation}
By composing with the unit map $\cE^0_i(B,Y) \to \Omega^k_{B^i} \Sigma^k_{B^i}\cE^0_i(B,Y) = \Omega^k_{B^i}\cE^k_i(B,Y)$, we obtain a map
\[
    \Gamma(B;f^*Y) \times_{\Gamma(B;f^*E)} \Gamma(E) \lra \Gamma(B^i; \Omega^k_{B^i}\cE^k_i(B,Y)) \simeq \Omega^k \Gamma(B^i; \cE^k_i(B,Y)).
\]
By adjunction, we thus obtain a map of sequential spectra
\[
    \Sigma^\infty\left(\Gamma(B;f^*Y) \times_{\Gamma(B;f^*E)} \Gamma(E)\right) \lra \widetilde{A}_n(B,Y).
\]
Projecting onto the second factor also yields a map
\[
    \Sigma^\infty\left(\Gamma(B;f^*Y) \times_{\Gamma(B;f^*E)} \Gamma(E)\right) \to \Sigma^\infty\Gamma(E),
\]
which we may add to the one above to obtain
\begin{equation}\label{eqn:goodwillie-tower-map}
    \Sigma^\infty\left(\Gamma(B;f^*Y) \times_{\Gamma(B;f^*E)} \Gamma(E)\right) \lra A_n(B,Y) := \Sigma^\infty\Gamma(E) \oplus \widetilde{A}_n(B,Y).
\end{equation}
Before embarking on the analysis of this map, and showing that it is the universal $n$-polynomial approximation, it will be convenient to verify that our point-set constructions have good homotopical properties. We do this in the lemmas below.

\begin{lemma}\label{lemma:exfibration}
Let $R \to S$ and $S \to T$ be ex-fibrations. Let $U \to S$ be a well-sectioned retractive space such that the composite map $U \to S \to T$ is an ex-fibration. Then
\[
    R \wedge_S U /^T S \lra T
\]
is an ex-fibration.
\end{lemma}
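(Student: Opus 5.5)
Since ex-fibrations are preserved by composition, by fibrewise smash products of ex-fibrations over a common base, and by pushouts of the kind ``collapse a section fibrewise'', the plan is to factor the construction into these three operations and quote the corresponding results from \cite[Chapter~8]{MaySigurdsson}.

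The first step is to view both $R$ and $U$ as retractive spaces over $T$. Let $r \colon R \to S$ be the ex-fibration, with section $\sigma_R$, and let $S \to T$ be the ex-fibration, with section $\sigma_S$. The composite $R \to S \to T$ is a fibration, and $\sigma_R \circ \sigma_S$ is a fibrewise cofibration over $T$, because composites of fibrewise cofibrations are fibrewise cofibrations. So $R$ is an ex-space over $T$, and by hypothesis the same holds for $U$. By \cite[Proposition~8.2.1.(ii)]{MaySigurdsson}, the fibrewise smash product $R \wedge_S U$ over $S$ of the ex-fibration $R$ with the well-sectioned $U$ is an ex-fibration over $S$.

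The second step, which is the main point, is to check that $R \wedge_S U \to S \to T$ is still a fibration over $T$. I would write $R \wedge_S U$ as the strict pushout of $R \times_S U \leftarrow R \vee_S U \to S$ and argue over $T$. First, $R \times_S U \to T$ is a fibration: $R\times_S U \to U$ is a pullback of the fibration $R \to S$, and $U \to T$ is a fibration. Next, the wedge $R \vee_S U = R \cup_S U$ is a pushout along fibrewise cofibrations of spaces that fibre over $T$, so it is a fibration over $T$ by the gluing lemma for fibrations along fibrewise cofibrations \cite[Lemma~8.1.4 or Proposition~8.1.5]{MaySigurdsson}. For the same reason, collapsing $R\vee_S U$ to $S$ gives a pushout along a fibrewise cofibration between fibrations over $T$, and this yields a fibration over $T$. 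The same argument shows that the section $T \to S \to R\wedge_S U$ is a fibrewise cofibration over $T$, since it is a composite of fibrewise cofibrations. This gluing step is where I expect the real difficulty: May--Sigurdsson's results are stated over a fixed base, and here the pushout is over $S$ while we need fibration properties over $T$. If the gluing lemma is not available in the exact form needed, I would check the fibrewise homotopy lifting property by hand, using the fibrewise NDR data of the sections.

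The last step is to collapse. The space $R \wedge_S U /^T S$ is the pushout of $T \leftarrow S \to R\wedge_S U$, where $S \to R\wedge_S U$ is the section. This section is a fibrewise cofibration over $S$, and hence over $T$, and all three spaces are fibrations over $T$. The fibrewise cofiber of a map of ex-fibrations over $T$ is an ex-fibration over $T$ \cite[Proposition~8.2.1]{MaySigurdsson}, applied here to $S \to R\wedge_S U$ viewed over $T$. Its section is $T \to$ (the image of $S$), which is a cofibration by the pushout, and this concludes the argument.
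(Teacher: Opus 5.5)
Your Steps~2 and~3 are exactly the paper's proof. First, $R\times_S U\to U\to T$ is a fibration by pullback and composition. Then $R\wedge_S U$ (the pushout of $S\leftarrow R\vee_S U\to R\times_S U$) and $R\wedge_S U/^T S$ (the pushout of $T\leftarrow S\to R\wedge_S U$) are pushouts of ex-spaces over the single base $T$, so \cite[Proposition~8.2.1.(ii)]{MaySigurdsson} applies directly. The ``change of base'' difficulty you anticipate therefore does not arise: f-cofibrations over $S$ are f-cofibrations over $T$, since the fibrewise cylinders agree.

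You should delete the aside in Step~1 that $R\wedge_S U\to S$ is an ex-fibration. It is false when $U\to S$ is not a fibration, which is exactly the case the lemma is designed for. For example, take $T=*$, $S=[0,1]$ based at $1$, $U=S\sqcup\{*\}$ with $*\mapsto 0$, and $R=S\times S^1$; then $R\wedge_S U\to S$ has fibre $S^1$ over $0$ and a point elsewhere. Your argument only ever uses the well-sectionedness of $R\wedge_S U$ over $S$, and that part does hold.
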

\begin{proof}
We first show that $R \wedge_S U \to T$ is an ex-fibration. First, $R \times_S U \to U$ is an ex-fibration as it is the pullback of the ex-fibration $R \to S$ along $U \to S$ \cite[Proposition~8.2.2]{MaySigurdsson}. Hence, by composition, $R \times_S U \to U \to T$ is also an ex-fibration. The smash product $R \wedge_S U$ is the pushout of $S \leftarrow R \vee_S U \to R \times_S U$ hence is an ex-fibration over $T$ by \cite[Proposition~8.2.1.(ii)]{MaySigurdsson}. Finally, the fibrewise quotient is the pushout of $T \leftarrow S \to R \wedge_S U$ which is seen to be an ex-fibration over $T$ by the same argument.
\end{proof}

\begin{lemma}[Compare {\cite[Proposition~3.13]{Malkiewich}}]\label{lemma:invariance-in-B}
For $Y \to E$ an ex-fibration, the functor $\widetilde{A}_n(-,Y)$ is a homotopy functor on the category of Quillen cofibrant topological spaces over $X$. That is, it preserves weak homotopy equivalences between cofibrant topological spaces over $X$.
\end{lemma}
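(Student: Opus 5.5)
Given a weak homotopy equivalence $\phi\colon B \to B'$ between cofibrant spaces over $X$, with structure maps $f$ and $f' = f\circ$ (compatibly, $f'\circ\phi = f$), we have to show that the induced map $\widetilde{A}_n(B',Y) \to \widetilde{A}_n(B,Y)$ is a levelwise weak equivalence of sequential spectra. Since $\cE_i^k(B,Y)$ is the pullback of $\cE_i^k(B',Y)$ along $\phi^i$, the map is given by precomposition with $\phi^i$. The plan follows Malkiewich's argument: express the $k$th space of $\widetilde{A}_n(B,Y)$ as a homotopy limit over the category of surjections $\Omega_n$ of finite sets $\{1,\dots,j\}\twoheadrightarrow\{1,\dots,i\}$ with $i,j\le n$. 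Precisely, the $k$th space is the end (strict limit) of the functor
\[
(i,j) \longmapsto \Gamma\big(B^j; (\Delta^{j\to i})^*\cE_i^k(B,Y)\big),
\]
built from the diagonal maps $\Delta\colon B^i\to B^j$ attached to surjections. It is thus a space of compatible sections over the "fat diagonal" diagram.

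The first step is to check that each fibration $\cE_i^k(B,Y) \to B^i$ is an ex-fibration. To see this, apply \cref{lemma:exfibration} with
\[
R = \Sigma^k_{B^i}(f^*Y)^{\ovwedge i}\to S=f^*E^i \to T = B^i \quad\text{and}\quad U=(B^i\times\Gamma(E))\sqcup f^*E^i.
\]
The inputs hold for the following reasons:
\begin{itemize}
\item $(f^*Y)^{\ovwedge i}\to f^*E^i$ is a pullback of the ex-fibration $Y^{\ovwedge i}\to E^i$, hence an ex-fibration.
\item $f^*E^i\to B^i$ is a pullback of $E^i \to X^i$, hence an ex-fibration.
\item The composite $U\to B^i$ is the disjoint union of the trivial projection $B^i\times\Gamma(E)\to B^i$ and the ex-fibration $f^*E^i \to B^i$, so it is an ex-fibration.
\end{itemize}

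With this in hand, pullback along $\phi^i$ and the section-space functor behave homotopically. For a weak equivalence $\phi^i$ of cofibrant spaces and a fibration over $B'^i$, the map
\[
\Gamma(B'^i;\cE')\to\Gamma(B^i;(\phi^i)^*\cE')
\]
is a weak equivalence, since $\Gamma(B;-)$ is the right Quillen functor $\mathrm{Map}_{/B'}(B,-)$ and $\phi$ is a weak equivalence of cofibrant objects. The same holds for the restrictions along each diagonal.

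The main obstacle is promoting these objectwise equivalences to an equivalence of the strict limits over $\Omega_n$. For this I would show that the diagram of section spaces is Reedy fibrant. Filter by $i$: the matching map at stage $(B^i\to\cE_i)$ is restriction of sections of $\cE_i^k$ from $B^i$ to the fat diagonal $\Delta_i B\subset B^i$, namely the union of images of $B^{i'}$ for $i'<i$. This restriction is a fibration when $\Delta_iB\hookrightarrow B^i$ is a cofibration, which holds for cofibrant $B$ by the standard argument (as in Arone's \cite{Greg} and Malkiewich's Prop.~3.13), using the product and pushout-product axioms. The map between the two Reedy fibrant towers of fibrations is then an equivalence at each fibre. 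Each fibre is a space of relative sections over $(B^i,\Delta_iB)$, which is invariant under the weak equivalence of pairs $(B^i,\Delta_iB)\to(B'^i,\Delta_iB')$, the fat diagonal being a homotopy colimit of diagonals. Induction on $i\le n$ and the five lemma on the resulting long exact sequences, together with the finiteness of the tower, conclude the levelwise equivalence.
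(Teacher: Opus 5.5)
Your route differs from the paper's, and the difference is legitimate, but the stage-$i$ identification at its core is wrong as written. The paper uses Ken Brown's lemma to reduce to an acyclic cofibration $B\to B'$. It then checks, exactly as you do via \cref{lemma:exfibration}, that each $\cE_i^k(B,Y)\to B^i$ is an ex-fibration, and cites a lifting argument for restricting compatible sections along an acyclic cofibration. You instead filter $\widetilde{A}_n$ by $i$ and compare the towers $\widetilde{A}_i\to\widetilde{A}_{i-1}$ fibre by fibre with the five lemma. This is viable, and it makes the layers explicit: it is the structure the paper itself exploits later in \cref{lemma:layer-computation}.

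\textbf{The gap.} The compatibility conditions of \cref{definition:An} include the surjections $\{1,\dots,i\}\to\{1,\dots,i\}$, i.e.\ permutations, so $s_i$ must be $\Sigma_i$-equivariant. Hence $\widetilde{A}^k_i(B,Y)\to\widetilde{A}^k_{i-1}(B,Y)$ is the pullback of the restriction $\Gamma(B^i;\cE^k_i(B,Y))^{\Sigma_i}\to\Gamma(\Delta_iB;\cE^k_i(B,Y)|_{\Delta_iB})^{\Sigma_i}$, not of the non-equivariant restriction you write down. Its fibres are therefore spaces of $\Sigma_i$-equivariant sections with prescribed values on the fat diagonal, not plain relative section spaces. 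The Reedy language over $\Omega_n$ hides this: $\Omega_n$ has automorphisms, so at best it is a generalised Reedy category, with equivariant matching maps.

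\textbf{How to repair it.} You need $\Delta_iB\hookrightarrow B^i$ to be a $\Sigma_i$-equivariant cofibration, and you should use that $\Sigma_i$ acts freely on $B^i\setminus\Delta_iB$. Lifting against free relative cells $\Sigma_i\times D^m$ reduces to non-equivariant lifting, so the equivariant restriction is still a fibration. By the same freeness, the comparison of the equivariant relative section spaces reduces to an underlying weak equivalence of the relatively free pairs. Alternatively, check that $\phi$ induces weak equivalences on all fixed-point spaces of $B^i$ and of the non-free $\Delta_iB$; these are unions of partial diagonals.

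\textbf{Other slips.}
\begin{itemize}
\item The cofibrancy of the fat diagonal does not follow from the product and pushout-product axioms, since a diagonal is not a pushout-product. It is a separate input, needed equivariantly, which both you and the paper ultimately take from Arone and Malkiewich.
\item The end should be built from $\Gamma(B^i;\Delta^*\cE^k_j(B,Y))$ for the diagonal $\Delta\colon B^i\to B^j$ of a surjection $\{1,\dots,j\}\to\{1,\dots,i\}$, not from $\Gamma(B^j;(\Delta^{j\to i})^*\cE^k_i)$.
\item In \cref{lemma:exfibration} the suspension must be taken as $\Sigma^k_{f^*E^i}$ on $R$, or after forming $R\wedge_SU/^TS$. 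It cannot be $\Sigma^k_{B^i}$ applied to a space over $f^*E^i$.
\item The objectwise invariance you use is that $\Map_{/B'^i}(-,\cE')$ sends weak equivalences of cofibrant objects to weak equivalences. That is a statement about the contravariant variable, not right Quillen-ness of $\Gamma(B;-)$.
\end{itemize}
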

\begin{proof}
Recall that in the slice Quillen model structure, a map $B \to B'$ of topological spaces over $X$ is an acyclic cofibration if its underlying map is. By Ken Brown's lemma, it suffices to show that such a map is sent to a levelwise equivalence $\widetilde{A}_n(B',Y) \to \widetilde{A}_n(B,Y)$. Although the evaluation map $B^i \times \Gamma(E) \to f^*E^i$ is not a fibration, the composition $B^i \times \Gamma(E) \to f^*E^i \to B^i$ is simply the projection hence trivially a fibration. We may therefore apply \cref{lemma:exfibration}, and the fact that fibrewise suspension and $f^*$ preserve ex-fibrations \cite[Propositions~8.2.1 and 8.2.2]{MaySigurdsson}, to conclude that the map
\[
    \cE_i^k(B,Y) = \Sigma^k_{B^i} \big( (f^*Y)^{\ovwedge i} \wedge_{f^*E^i} ((B^i \times \Gamma(E)) \sqcup f^*E^i) \big) /^{B^i} f^*E^i \lra B^i
\]
is an ex-fibration hence a Serre fibration. We are reduced to showing that for section spaces of Serre fibrations, restricting along an acyclic cofibration is a weak equivalence. This follows from standard lifting arguments, see e.g. the proof of \cite[Proposition~3.13]{Malkiewich}.
\end{proof}

\begin{lemma}\label{lemma:invariance-in-Y}
Let $B$ be a CW-complex. The functor $\widetilde{A}_n(B,-)$ sends homotopy equivalences between ex-fibrations to homotopy equivalences.
\end{lemma}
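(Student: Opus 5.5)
Let $\phi \colon Y \to Y'$ be a homotopy equivalence between ex-fibrations over $E$. The plan is to work level by level: it suffices to show that for each $k$ the induced map of $k$th spaces $\widetilde{A}_n(B,Y)_k \to \widetilde{A}_n(B,Y')_k$ is a homotopy equivalence. Since $\widetilde{A}_n(B,-)$ is functorial in $Y$ for maps of retractive spaces over $E$, it is natural to use that a homotopy equivalence of ex-fibrations is in fact a fibrewise homotopy equivalence over $E$ compatible with sections. This is the ex-space analogue of Dold's theorem, available in \cite[Ch.~8]{MaySigurdsson} for ex-fibrations over a base of the homotopy type of a CW-complex. One could alternatively phrase everything with weak equivalences and Ken Brown's lemma, but the fibrewise version is more direct. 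So pick a fibrewise ex-homotopy inverse $\psi$ and ex-homotopies $H \colon Y \wedge_E (E\times I)_+ \to Y$ and $H' \colon Y' \wedge_E (E\times I)_+ \to Y'$ witnessing $\psi\phi \simeq \mathrm{id}$ and $\phi\psi \simeq \mathrm{id}$.

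The next step is to check that the construction $Y \mapsto \cE^k_i(B,Y)$ carries fibrewise ex-homotopies over $E$ to fibrewise ex-homotopies over $B^i$. Concretely, $H$ induces a homotopy $(f^*Y)^{\ovwedge i} \wedge (I_+) \to (f^*Y)^{\ovwedge i}$ over $f^*E^i$ by applying $H$ diagonally in each factor with the same time parameter $t$. This construction is continuous, preserves the section, and is compatible with the smash product over $f^*E^i$, with the fibrewise suspension $\Sigma^k_{B^i}$, and with the collapse $/^{B^i} f^*E^i$, all of which are functorial constructions of retractive spaces. We thus obtain fibrewise homotopies of $\cE^k_i(B,-)$ over $B^i$ witnessing that $\phi_*$ and $\psi_*$ are mutually inverse up to fibrewise homotopy.

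Finally, we pass to the section spaces and the compatibility conditions. A homotopy $\cE^k_i(B,Y)\times I \to \cE^k_i(B,Y)$ over $B^i$ acts on sections by postcomposition. Because the homotopy uses one common time parameter for all $i$ and is built naturally from $H$, it commutes with the diagonal-type maps $\cE^k_i \to \cE^k_j$ induced by surjections $\{1,\dots,j\}\twoheadrightarrow\{1,\dots,i\}$. Hence it preserves the subspace of compatible tuples $(s_1,\dots,s_n)$ defining $\widetilde{A}_n(B,Y)_k$, and gives a homotopy $\psi_*\phi_* \simeq \mathrm{id}$ on it; the same argument gives $\phi_*\psi_* \simeq \mathrm{id}$. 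The structure maps of the sequential spectrum are likewise respected, so we get a levelwise homotopy equivalence. Hence the map is an equivalence of spectra in the levelwise model structure.

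I expect the main obstacle to be the first step. Only a homotopy equivalence of total spaces is given, and one must upgrade it to a fibrewise ex-homotopy equivalence over $E$. This requires the base $E$ to be reasonable, for instance of the homotopy type of a CW-complex, and I would invoke the appropriate result of \cite{MaySigurdsson} there. The rest, in particular verifying that the diagonal maps commute with the simultaneous homotopies, is routine point-set checking.
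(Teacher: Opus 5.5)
Your argument is correct, but it takes a different route from the paper's. The paper's proof is two sentences. A homotopy equivalence $Y \to Y'$ induces homotopy equivalences $\cE^k_i(B,Y) \to \cE^k_i(B,Y')$ of ex-fibrations over $B^i$, and since $B$ is a CW-complex this gives an equivalence of section spaces. That route uses the CW structure of $B$ for homotopy invariance of section spaces, the same kind of input as in \cref{lemma:invariance-in-B}. It leaves implicit two things: why the bundles $\cE^k_i$ are equivalent, and how the compatibility conditions along surjections are handled, since $\widetilde{A}^k_n(B,Y)$ is a space of compatible tuples rather than a single section space.

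You instead first upgrade $\phi$ to an ex-homotopy equivalence over $E$. You then push the inverse and the homotopies through every construction, using one common time parameter on all smash factors. Because of this naturality, the homotopies commute with the maps $\cE^k_i \to \cE^k_j$, so the compatibility conditions cost nothing, and postcomposition gives an honest homotopy inverse on each level space. This makes the argument self-contained and independent of the CW hypothesis on $B$. The price is the upgrade lemma plus routine continuity checks, such as products with $I$ preserving quotient maps in compactly generated spaces. Conversely, the paper's appeal to section spaces over a CW base is the kind of argument that would also cover fibrewise weak equivalences, which yours cannot since it needs genuine homotopy inverses.

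On the step you flagged as the main obstacle: no hypothesis on $E$ is needed. A map over $E$ between Hurewicz fibrations that is a homotopy equivalence of total spaces is always a fibre homotopy equivalence; this is the dual of the corresponding statement for cofibrations under a space. Since sections of ex-fibrations are fibrewise cofibrations, the same cylinder argument carried out in $\catTop_{/E}$ then makes it an ex-homotopy equivalence. If the lemma's hypothesis is already meant as an ex-homotopy equivalence, this step is vacuous.
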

\begin{proof}
Any homotopy equivalence $Y \to Y'$ induces a homotopy equivalence $\cE_i^k(B,Y) \to \cE_i^k(B,Y')$. As the base $B$ is a CW-complex, this induces an equivalence between the section spaces.
\end{proof}

We can now observe that our construction $A_n(-,-)$ is indeed a homotopy functor. The slice Quillen model category on $\catTop_{/X}$ presents the slice $\infty$-category $\catS_{/X}$, see e.g. \cite[Corollary~7.6.13]{Cisinski2019}. Likewise, the Hurewicz model structure on the category of retractive topological spaces $\catTop_{E//E}$ presents the slice $\infty$-category $\catS_{E//E}$, hence so does the subcategory of ex-fibrations by the existence of an ex-fibrant replacement functor \cite[Section~8.3]{MaySigurdsson}. By \cref{lemma:invariance-in-B,lemma:invariance-in-Y} we thus obtain a functor
\[
    A_n(-,-) \colon \catS_{/X}^\opp \times \catS_{E//E} \lra \catSp.
\]

For later manipulations, it will also be useful to know that we may replace the evaluation map by a fibration without changing the homotopy type of our construction:
\begin{lemma}\label{lemma:ev-fibrant-replacement}
Let $B^i \times \Gamma(E) \sqcup f^*E^i \overset{\simeq}{\lra} W$ be a homotopy equivalence of retractive spaces over $f^*E^i$ such that $W \to f^*E^i$ is a fibration. Then
\[
    \Sigma^k_{B^i} \big( (f^*Y)^{\ovwedge i} \wedge_{f^*E^i} ((B^i \times \Gamma(E)) \sqcup f^*E^i) \big) /^{B^i} f^*E^i \lra \Sigma^k_{B^i} \big( (f^*Y)^{\ovwedge i} \wedge_{f^*E^i} W \big) /^{B^i} f^*E^i
\]
is a homotopy equivalence of ex-fibrations over $B^i$.
\end{lemma}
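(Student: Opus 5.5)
The plan is to check that the construction
\[
Z \longmapsto \Sigma^k_{B^i} \big( (f^*Y)^{\ovwedge i} \wedge_{f^*E^i} Z \big) /^{B^i} f^*E^i
\]
sends the given equivalence $B^i \times \Gamma(E) \sqcup f^*E^i \to W$ to a fibrewise homotopy equivalence over $B^i$. I would treat it as a composite of elementary operations: fibrewise smash with $(f^*Y)^{\ovwedge i}$ over $f^*E^i$, collapse of the section, and fibrewise suspension. Each of these should preserve fibrewise homotopy equivalences between well-sectioned objects.

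First I would record that both sides are ex-fibrations over $B^i$.
\begin{itemize}
\item For the source, this was already shown in the proof of \cref{lemma:invariance-in-B}, via \cref{lemma:exfibration}.
\item For the target, I would apply \cref{lemma:exfibration} with $R = (f^*Y)^{\ovwedge i}$, $S = f^*E^i$, $T = B^i$ and $U = W$. The composite $W \to f^*E^i \to B^i$ is a fibration because it composes two fibrations.
\item If $W$ is not already well-sectioned, I would first replace it by its ex-fibrant replacement from \cite[Section~8.3]{MaySigurdsson}. This does not change the fibrewise homotopy type, so there is no loss of generality.
\end{itemize}
Then the fibrewise suspension $\Sigma^k_{B^i}$ preserves ex-fibrations by \cite[Proposition~8.2.1]{MaySigurdsson}.

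Next I would push the equivalence through the construction step by step. Suppose the map $\phi \colon B^i \times \Gamma(E) \sqcup f^*E^i \to W$ has a homotopy inverse $\psi$ and homotopies, all over and under $f^*E^i$.
\begin{itemize}
\item Applying $R \times_{f^*E^i} (-)$ with $R = (f^*Y)^{\ovwedge i}$ is strictly functorial in such data. It therefore produces a fibrewise homotopy equivalence of fibre products.
\item The same holds for the fibrewise wedge $R \vee_{f^*E^i} (-)$.
\item The smash product $R \wedge_{f^*E^i}(-)$ is the pushout of $f^*E^i \leftarrow R \vee Z \to R \times Z$. The homotopies are compatible with this pushout diagram, because they restrict correctly to the wedge and to the section. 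Hence they descend to a homotopy equivalence on smash products over $f^*E^i$.
\item A homotopy equivalence over $f^*E^i$ is in particular one over $B^i$ by composing with $f^*E^i \to B^i$. The collapse $/^{B^i} f^*E^i$ and the fibrewise suspension $\Sigma^k_{B^i}$ are again functorial in homotopies under $f^*E^i$ and over $B^i$, so they preserve it.
\end{itemize}

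The main obstacle is the point-set ambiguity in ``homotopy equivalence of retractive spaces'' if it is only a weak or non-fibrewise equivalence. In that case the strict homotopies are not available. My fallback is to argue fibrewise:
\begin{itemize}
\item Both targets are ex-fibrations over $B^i$, hence Serre fibrations.
\item So it suffices to show that the map induces weak equivalences on fibres over each point $b \in B^i$.
\item Over $b$, the fibre of the smash product is computed by a strict pushout of fibre products of (ex-)fibrations. Since $R \to f^*E^i$ is an ex-fibration and both $Z$'s are well-sectioned, these strict constructions agree with the homotopy ones.
\item The fibre is therefore the homotopy pushout built from $R$ and $Z$ restricted over $f^*E^i|_b$. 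On this fibre the map is a weak equivalence because $\phi$ is.
\end{itemize}
The gluing lemma then finishes the comparison on fibres. Finally, Dold's theorem, or the CW-structure of $B$ as in \cref{lemma:invariance-in-Y}, upgrades the fibrewise weak equivalence between ex-fibrations to a fibrewise homotopy equivalence.
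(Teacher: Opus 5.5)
Your fallback argument is correct and is essentially the paper's proof: the paper gets the smash step from \cite[Proposition~8.2.6]{MaySigurdsson} and then observes that the collapse and the fibrewise suspension are homotopy pushouts. The one point to make explicit is why the restriction over $b\in B^i$ of $(B^i\times\Gamma(E))\sqcup f^*E^i \to W$ is a weak equivalence. This holds because both spaces are fibrations over $B^i$ (the first via the projection), even though the source is not a fibration over $f^*E^i$. You should drop your primary route via a homotopy inverse over and under $f^*E^i$. Since the evaluation map is not a fibration (and for $i>1$ is not even surjective), no such fibrewise inverse to the fibration $W$ exists in general, so the fallback is the actual proof.
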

\begin{proof}
By \cite[Proposition~8.2.6]{MaySigurdsson}, the map obtained by smashing
\[
    (f^*Y)^{\ovwedge i} \wedge_{f^*E^i} ((B^i \times \Gamma(E)) \sqcup f^*E^i) \lra (f^*Y)^{\ovwedge i} \wedge_{f^*E^i} W
\]
is a homotopy equivalence between ex-fibrations over $B^i$. The suspension and the quotient are defined by homotopy pushouts which preserve homotopy equivalences.
\end{proof}

\subsection{Polynomiality}\label{subsection:polynomiality}

We now show that our bivariate functor $A_n(-,-)$ is $n$-excisive in each variable.
\begin{proposition}\label{proposition:first-variable-excision}
For any $Y \in \catS_{E//E}$, the functor $A_n(-,Y)$ sends strongly cocartesian $(n+1)$-cubes to cartesian cubes.
\end{proposition}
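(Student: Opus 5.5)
We show that $\widetilde{A}_n(-,Y)$ sends strongly cocartesian $(n+1)$-cubes in $\catS_{/X}$ to cartesian cubes. The summand $\Sigma^\infty\Gamma(E)$ of $A_n(B,Y)$ does not depend on $B$, so it contributes a constant cube, which is cartesian. The argument follows Arone's \cite{Greg} and Malkiewich's \cite[\S~3]{Malkiewich} treatment for mapping spaces. By \cref{lemma:invariance-in-B} we may work with cofibrant models and strict constructions. Since fibre sequences of spectra are also cofibre sequences, it is enough to handle each level $k$ and then compute homotopy limits levelwise.

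The first step filters $\widetilde{A}_n(B,Y)$ by the number $i$ of coordinates. Restricting a compatible tuple $(s_1,\dots,s_n)$ to $(s_1,\dots,s_{n-1})$ gives a map $\widetilde{A}_n(B,Y)\to\widetilde{A}_{n-1}(B,Y)$. Following Arone, I would identify its fibre with a space of sections of $\cE^k_n(B,Y)$ over $B^n$ that are trivial on the fat diagonal $\Delta_n B\subset B^n$, taken $\Sigma_n$-equivariantly. The reason is that the compatibility squares for surjections $\{1,\dots,n\}\twoheadrightarrow\{1,\dots,i\}$ with $i<n$ determine $s_n$ on $\Delta_nB$ from the lower $s_i$. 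Concretely, the fibre should be the $\Sigma_n$-fixed points of a relative section space
\[
\Gamma\big(B^n,\Delta_nB;\ \cE^k_n(B,Y)\big)^{\Sigma_n}.
\]
After passing to spectra, this becomes the homotopy orbits of a spectrum of relative sections, parameterised over $(B^n/\Delta_nB)$. Making this precise needs the restriction $B^{\ovwedge}$-style maps to be fibrations, which follows from \cref{lemma:exfibration} and the ex-fibration property used in \cref{lemma:invariance-in-B}. By induction on $n$, it then suffices to show that each layer sends strongly cocartesian $(n+1)$-cubes to cartesian ones. The case $n=0$ is trivial, and a cartesian cube is preserved under extensions of fibre sequences.

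For the layer, view it as a contravariant functor of $B$ built from $B\mapsto (B^n,\Delta_nB)$. Sections of a fibration pulled back along $B^n\to X^n$ turn homotopy colimits in the space variable into homotopy limits. The relative-sections construction is therefore the mapping object out of the retractive pair $B^n/\Delta_n B$ over $X^n$ into a fixed parameterised spectrum, with the $\Gamma(E)$-dependence carried along fibrewise. This needs some care, because the fibre of $\cE^k_n$ involves $B^n\times\Gamma(E)$; still, $\Gamma(E)$ only enters through the evaluation over $f^*E^n$, so the construction is natural in $B$ over $X$. The key input is the classical fact that $B\mapsto B^{\wedge n}/\Delta$, and more precisely the parameterised version $B\mapsto \Sigma^\infty_{X^n}(B^n/\Delta_nB)$, sends strongly cocartesian $(n+1)$-cubes to cocartesian cubes. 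This is the standard statement that the $n$-fold smash product with its fat diagonal collapsed is $n$-excisive; see the corresponding step in \cite{Greg} and \cite{Malkiewich}. Applying a contravariant functor that turns colimits into limits, followed by $\Sigma_n$-homotopy fixed points, then gives cartesian cubes.

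I expect the main obstacle to be the first step. One has to identify the fibre of $\widetilde{A}_n\to\widetilde{A}_{n-1}$ with the relative equivariant section spectrum and check that this is a homotopy fibre: the restriction to the fat diagonal must be a fibration, and the compatibility conditions must be exactly those determined on $\Delta_nB$. This in turn requires $\Delta_nB\subset B^n$ to be an equivariant cofibration, which holds for CW $B$. I would try to handle it by writing the space of compatible tuples as an end over the category of surjections and comparing it with Arone's latching-object description. The excision of $B\mapsto B^n/\Delta_nB$ can then be quoted.
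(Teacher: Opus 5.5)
Your argument is correct. The paper itself gives no argument beyond saying the proof is analogous to \cite[Proposition~3.15]{Malkiewich}. Your reduction is a legitimate way to fill this in: induction on $n$ through the fibre sequences, then the identification of the layer with $\Gamma(B^n,\Delta_nB;\cE^k_n(B,Y))^{\Sigma_n}$ exactly as in \cref{lemma:layer-computation}, then $n$-excisiveness of $B\mapsto\Sigma^\infty_{X^n}(B^n/\Delta_nB)$.

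That last fact is easy to prove rather than quote. $\Sigma^\infty_{X^n,+}B^n\simeq(\Sigma^\infty_{X,+}B)^{\boxtimes n}$ is $n$-excisive. The fat diagonal is a colimit of the diagonals $B^{\pi}\cong B^{|\pi|}$, with $\pi$ running over non-discrete partitions of $\{1,\dots,n\}$, so it only contributes an $(n-1)$-excisive functor.

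Three points should be stated precisely rather than gestured at.
\begin{enumerate}
\item What makes the target ``fixed'' is the identification $\cE^k_i(B,Y)\cong(f^i)^*\cE^k_i(X,Y)$. It holds because $B^i\times\Gamma(E)\to f^*E^i$ is the base change of $X^i\times\Gamma(E)\to E^i$.
\item The strict fixed points are homotopy fixed points because $\Sigma_n$ acts freely on $B^n\setminus\Delta_nB$. The homotopy-orbit description you also mention requires $B$ finite, and it is not used.
\item Identifying the levelwise-defined layer with a mapping spectrum is only safe for finite $B$, which is the case used in \cref{proposition:order-n-agreement}. For general cofibrant $B$, run the same argument one level $k$ at a time with unstable inputs.
\end{enumerate}

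A shorter route avoids the layers altogether. $\widetilde{A}^k_n(B,Y)$ is the space of maps of $\mathsf{Epi}_{\le n}$-diagrams over $\{X^i\}$ from $\{B^i\}$ to the fixed diagram $\{\cE^k_i(X,Y)\}$. For $i\le n$ the functor $B\mapsto B^i$ already sends strongly cocartesian $(n+1)$-cubes to cocartesian cubes of spaces. Indeed, $B_T^i$ is the colimit of $B_{U_1}\times\dots\times B_{U_i}$ over $U_j\subseteq T$ with $|U_j|\le 1$, and $U_1\cup\dots\cup U_i$ is always a proper subset of $\{1,\dots,n+1\}$.

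Mapping out of a cocartesian cube then gives a cartesian one. The point-set end is homotopically correct under the same cofibrancy of $\Delta_iB\subset B^i$ that you need for the restriction map to be a fibration. This route works levelwise for every cofibrant $B$ and needs no fixed-point argument. Your version has the merit of reusing exactly the layer computation the paper proves anyway.
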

\begin{proof}
This is analogous to \cite[Proposition~3.15]{Malkiewich}.
\end{proof}

To show polynomiality in the second variable, we proceed by induction on $n$. Let
\[
    D_n(B,Y) = \fib\left( A_n(B,Y) \lra A_{n-1}(B,Y) \right)
\]
be the fibre of the natural map which forgets the $n$th section $s_n$.

\begin{lemma}\label{lemma:layer-computation}
Let $B$ be a finite CW-complex. Denote by $\cE^\infty_n(B,Y) = \Sigma^\infty_{B^n} \cE^0_n(B,Y)$ the fibrewise suspension spectrum. There is a natural equivalence of spectra
\[
    D_n(B,Y) \simeq \Gamma_c(\UConf_n(B); \cE^\infty_n(B,Y)_{\Sigma_n}) \simeq \Gamma_c(\Conf_n(B); \cE^\infty_n(B,Y))_{h\Sigma_n},
\]
where $\Gamma_c(-)$ denotes the spectrum of compactly supported sections, $\cE^\infty_n(B,Y)_{\Sigma_n}$ is the quotient bundle over the unordered configuration space $\UConf_n(B)$, and $\Sigma_n$ acts naturally by permutations.
\end{lemma}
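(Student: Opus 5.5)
Consider the forgetful map $A_n(B,Y)\to A_{n-1}(B,Y)$ at each level $k$, following Arone's analysis of his tower for mapping spaces. Write $\widetilde A_n(B,Y)_k$ as the space of compatible families $(s_1,\dots,s_n)$. The point is that $s_n$ is almost determined by $s_1,\dots,s_{n-1}$. Compatibility along surjections $\{1,\dots,n\}\twoheadrightarrow\{1,\dots,i\}$, $i<n$, means the following: on the fat diagonal $\Delta_n\subset B^n$, where two coordinates agree, $s_n$ is the pullback of some $s_i$ along the corresponding diagonal inclusion. The cube of fat diagonals is made strict using the ex-fibration property (\cref{lemma:exfibration}, and \cref{lemma:ev-fibrant-replacement} if we need the evaluation map to be a fibration). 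Then the map forgetting $s_n$ is the restriction map
\[
    \Gamma(B^n;\cE^k_n(B,Y))^{\text{compatible}} \lra \Gamma(\Delta_n;\cE^k_n(B,Y)|_{\Delta_n})^{\text{compatible}}
\]
pulled back along the map from $\widetilde A_{n-1}$ recording the data induced on $\Delta_n$. Since $B$ is a CW complex, $(B^n,\Delta_n)$ is a relative CW pair, so restriction of sections of an ex-fibration is a Serre fibration. Hence $A_n(B,Y)\to A_{n-1}(B,Y)$ is a levelwise fibration of sequential spectra, and by the recollections its homotopy fibre is the strict fibre. The $\Sigma^\infty\Gamma(E)$ summand cancels.

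Next identify the strict fibre at level $k$. It is the space of sections of $\cE^k_n(B,Y)\to B^n$ that equal the basepoint section on $\Delta_n$. We get exactly the basepoint section, not merely something pulled back, because the fibre is taken over the basepoint of $A_{n-1}$, where all $s_i$ with $i<n$ are the basepoint sections. This is the space of sections of $\cE^k_n$ over $B^n$ relative to $\Delta_n$. Assembling over $k$ gives $\Gamma(B^n,\Delta_n;\cE^\infty_n(B,Y))$, the spectrum of sections vanishing on the fat diagonal.

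Now bring in the symmetric group. The compatibility conditions for the permutations (bijective surjections) force $s_n$ to be $\Sigma_n$-equivariant. So the fibre is the space of $\Sigma_n$-fixed relative sections, i.e.\ the relative sections of the quotient bundle $\cE^\infty_n(B,Y)_{\Sigma_n}$ over the pair $(B^n/\Sigma_n,\Delta_n/\Sigma_n)$. Since $B$ is a finite CW complex, $\Delta_n$ is a subcomplex of $B^n$ and a neighbourhood deformation retract, and $\Sigma_n$ acts freely off $\Delta_n$. The excision identification of sections relative to a closed subcomplex with compactly supported sections on its complement then gives
\[
    D_n(B,Y)\simeq \Gamma_c(\UConf_n(B);\cE^\infty_n(B,Y)_{\Sigma_n}).
\]
This uses the sheaf-of-spectra framework from the recollections, and needs $B$ finite so that the complement is the interior of a compact region. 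For the second equivalence, the action on $\Conf_n(B)$ is free, so sections over $\UConf_n(B)$ of the quotient bundle are the homotopy fixed points $\Gamma_c(\Conf_n(B);\cE^\infty_n)^{h\Sigma_n}$. The norm map from homotopy orbits to homotopy fixed points is an equivalence for compactly supported sections over a free finite $\Sigma_n$-complex. To see this, induct over equivariant cells of a compactification: for a free cell $\Sigma_n\times e$ both sides agree, and both sides are exact functors of the cell filtration.

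The main obstacle is the first step: showing that the forgetful map is a fibration and that its fibre involves no residual compatibility data. This requires checking that the iterated diagonal compatibilities form a strictly cofibrant, Reedy-type diagram, so that restriction to the union of diagonals behaves as a homotopy limit. I would handle this as Arone does, by filtering $\Delta_n$ by partitions and using that each stratum inclusion is a cofibration between CW complexes.
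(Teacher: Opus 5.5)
Your proposal is correct and follows essentially the paper's route: $\widetilde A_n$ is the strict pullback of $\widetilde A_{n-1}$ along the restriction map $\Gamma(\cE^k_n)^{\Sigma_n}\to\Gamma(\cE^k_n|_\Delta)^{\Sigma_n}$; its fibre is the space of equivariant sections that are trivial on the fat diagonal; localisation/excision turns this into compactly supported sections on $\Conf_n(B)$; and freeness of the action lets you descend to $\UConf_n(B)$. The only local difference is that you obtain the last equivalence by a norm-map argument with induction over free cells, where the paper cites the Adams isomorphism, and the ``main obstacle'' you flag is not actually needed: a strict pullback along a fibration is already a homotopy pullback, so no homotopy-limit property of the union of diagonals is required.
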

\begin{proof}
We first show that the projection map $\widetilde{A}_n(B,Y) \to \widetilde{A}_{n-1}(B,Y)$ is a levelwise fibration of sequential spectra, and then compute the strict fibre. Let $\widetilde{A}^k_n(B,Y)$ and $\widetilde{A}^k_{n-1}(B,Y)$ be the $k$th spaces of the respective spectra. Let $\Delta \subset B^n$ be the fat diagonal. The compatibility conditions on $(s_1,\dotsc,s_{n-1}) \in \widetilde{A}^k_{n-1}(B,Y)$ show that they glue to a section of the restriction $\cE^k_n(B,Y)|_\Delta \to \Delta$. Furthermore, the tuple extends to an element $(s_1,\dotsc,s_{n-1},s_n) \in \widetilde{A}^k_n(B,Y)$ if and only if there exists a $\Sigma_n$-equivariant section $s_n$ of $\cE^k_n(B,Y)$ which extends the given section over $\Delta$. In other words, we have a strict pullback of spaces:
\[
\begin{tikzcd}
|[label={[label distance=-2mm]-45:\lrcorner}]| {\widetilde{A}^k_n(B,Y)} \arrow[d] \arrow[r] & \Gamma(\cE^k_n(B,Y))^{\Sigma_n} \arrow[d, "\mathrm{res}"] \\
{\widetilde{A}^k_{n-1}(B,Y)} \arrow[r]       & \Gamma(\cE^k_n(B,Y)|_\Delta)^{\Sigma_n}  
\end{tikzcd}
\]
The symmetric group $\Sigma_n$ acts on $\Delta$, $B^n$, and $\cE^k_n(B,Y)$ by permutations, hence $\cE^k_n(B,Y) \to B^n$ is a $\Sigma_n$-fibration. As $\Delta \subset B^n$ is a cofibration, the restriction map $\mathrm{res}$ is a fibration, hence so is the pulled back map. As $B$ is a CW-complex, hence cofibrant, our point-set formula for $A_n(B,Y)$ has the correct homotopy type. Hence we readily compute the homotopy fibre from the diagram
\begin{align*}
    \fib\big(\widetilde{A}_n(B,Y) \to \widetilde{A}_{n-1}(B,Y)\big) &\simeq \fib\big(\Gamma(\cE^\infty_n(B,Y))^{\Sigma_n} \to \Gamma(\cE^\infty_n(B,Y)|_\Delta)^{\Sigma_n}\big) \\
        &\simeq \fib\big(\Gamma(\cE^\infty_n(B,Y)) \to \Gamma(\cE^\infty_n(B,Y)|_\Delta)\big)^{\Sigma_n}
\end{align*}
In this formula, the symbol $(-)^{\Sigma_n}$ denotes the fixed point functor applied levelwise to a sequential spectrum (i.e. the categorical fixed points). In particular, it commutes with taking the fibre as the restriction map is a levelwise fibration. Furthermore this fibre is the homotopy fibre, hence it agrees with the spectrum of compactly supported sections by the localisation sequence of sheaves of spectra \cite[Corollary~4.5]{Volpe}:
\[
    \fib\big(\Gamma(\cE^\infty_n(B,Y)) \to \Gamma(\cE^\infty_n(B,Y)|_\Delta)\big)^{\Sigma_n} \simeq \Gamma_c\left(B^n - \Delta; \cE^\infty_n(B,Y)\right)^{\Sigma_n}.
\]
As the symmetric group action on the configuration space $B^n - \Delta$ is free, an equivariant section over the ordered configuration space $B^n - \Delta$ is the same as a section of the quotient bundle over the unordered configuration space $\UConf_n(B) = (B^n - \Delta)/\Sigma_n$, that is
\[
    \Gamma_c\left(B^n - \Delta; \cE^\infty_n(B,Y)\right)^{\Sigma_n} \simeq  \Gamma_c(\UConf_n(B); \cE^\infty_n(B,Y)_{\Sigma_n}).
\]
By the Adams isomorphism \cite[Theorem~II.7.1]{MayLewis}, it is also equivalent to $\Gamma_c(\Conf_n(B); \cE^\infty_n(B,Y))_{h\Sigma_n}$.
\end{proof}

\begin{corollary}\label{corollary:homogeneity}
Let $B$ be a finite CW-complex with a map $f \colon B \to X$. Then the functor
\[
    D_n(B,-) \colon \catS_{E//E} \to \catSp
\]
is $n$-homogeneous in the sense of Goodwillie. Furthermore, if $B$ is a closed manifold then there is a natural equivalence
\[
    D_n(B,Y) \simeq \colim_{f^*E^n \sslash \Sigma_n}\left( f^*(\Sigma_E^\infty Y)^{\boxtimes n} \otimes \mathrm{ev}_!\bS^0 \otimes \overline{p}^*\bS^{-TB^n} \otimes \overline{p}^*\omega_!\bS^0\right)
\]
where $\overline{p} \colon f^*E^n \sslash \Sigma_n \to B^n \sslash \Sigma_n$ is the projection, $\mathrm{ev} \colon B^n \sslash \Sigma_n \times \Gamma(E) \to f^*E^n \sslash \Sigma_n$ is the evaluation map, and $\omega \colon \UConf_n(B) \to B^n \sslash \Sigma_n$ is the inclusion. In particular, when $X$ is a closed manifold, the parameterised spectrum
\[
    \mathrm{ev}_!\bS^0 \otimes p^*\bS^{-TX^n} \otimes p^*\omega_!\bS^0 \in \catSp^{E^n \sslash \Sigma_n}
\]
is the derivative of the functor $D_n(X,-) \colon \catS_{E//E} \to \catSp$.
\end{corollary}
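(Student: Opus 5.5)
The plan is to start from \cref{lemma:layer-computation}, which gives
\[
    D_n(B,Y) \simeq \Gamma_c(\Conf_n(B); \cE^\infty_n(B,Y))_{h\Sigma_n},
\]
and to rewrite this expression until it has the standard form of an $n$-homogeneous functor. Homogeneity for a general finite CW-complex $B$ follows from the same formula.

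\emph{Step 1: express the fibre of $\cE^\infty_n(B,Y)$ in $Y$.} Over a point $b=(b_1,\dotsc,b_n)\in B^n$, the fibre of $\cE^\infty_n(B,Y)$ is the suspension spectrum of
\[
    \big((f^*Y)^{\ovwedge n} \wedge_{f^*E^n} ((B^n\times\Gamma(E))\sqcup f^*E^n)\big)\big/^{B^n} f^*E^n .
\]
By \cref{lemma:ev-fibrant-replacement} we may first replace the evaluation map by a fibration. In parameterised spectra over $f^*E^n$ this quotient is then $(\Sigma^\infty_E Y)^{\boxtimes n}$ pulled back by $f$ and tensored with $\mathrm{ev}_!\bS^0$. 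Pushing forward along $\overline p\colon f^*E^n\to B^n$ gives
\[
    \cE^\infty_n(B,Y)\simeq \overline p_!\big(f^*(\Sigma^\infty_EY)^{\boxtimes n}\otimes \mathrm{ev}_!\bS^0\big),
\]
naturally in $Y$ and $\Sigma_n$-equivariantly.

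\emph{Step 2: homogeneity.} The functor $Y\mapsto (\Sigma^\infty_EY)^{\boxtimes n}$ is the prototypical $n$-homogeneous functor. The remaining operations are
\begin{itemize}
    \item tensoring with a fixed parameterised spectrum,
    \item $\overline p_!$,
    \item compactly supported sections over the finite-dimensional, locally compact space $\Conf_n(B)$,
    \item homotopy orbits.
\end{itemize}
Each of these is exact, i.e.\ preserves fibre and cofibre sequences. For compactly supported sections this holds because $\Gamma_c$ of a sheaf of spectra is an exact functor (compare the localisation sequence used in \cref{lemma:layer-computation}). Hence $D_n(B,-)$ is $n$-excisive. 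It is also reduced to degree $n$: if we set $Y=E$ (the base point of $\catS_{E//E}$) then $\Sigma^\infty_E Y\simeq 0$, so $D_n(B,-)$ vanishes at the base point. The same holds for its cross effects below $n$, since the functor is built from the $n$-fold external power, so $P_{n-1}D_n(B,-)\simeq 0$.

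\emph{Step 3: the closed manifold case.} When $B$ is a closed manifold, $\Conf_n(B)=B^n-\Delta$ is the interior of a compact manifold. Costenoble--Waner/Atiyah duality identifies compactly supported sections with a colimit twisted by $\bS^{-TB^n}$; the tangent bundle of $\Conf_n(B)$ is the restriction of $TB^n$. Concretely,
\[
    \Gamma_c(\Conf_n(B);\cF)\simeq \colim_{\Conf_n(B)}\big(\cF\otimes \bS^{-TB^n}\big).
\]
Passing to orbits, $(-)_{h\Sigma_n}$ turns this into a colimit over $\UConf_n(B)$. That is $\colim_{B^n\sslash\Sigma_n}\omega_!(\omega^*(-))$, and by the projection formula this equals
\[
    \colim_{B^n\sslash\Sigma_n}\big((-)\otimes\omega_!\bS^0\big).
\]
We substitute Step 1 and use the projection formula once more for $\overline p$, so that everything is pulled up to $f^*E^n\sslash\Sigma_n$. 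This yields the displayed formula, and taking $B=X$, $f=\mathrm{id}$ identifies the derivative.

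\emph{Main obstacle.} The hardest step is the duality in Step 3. Costenoble--Waner duality is stated for closed manifolds, whereas $\Conf_n(B)$ is open. I would handle this by writing $\Gamma_c$ as a fibre of restriction from $B^n$ to a neighbourhood of $\Delta$, and applying the duality over $B^n$. Doing so $\Sigma_n$-equivariantly, while keeping track of the point-set section spaces, is the delicate part.
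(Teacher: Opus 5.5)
Your outline is correct. The formal part of the closed-manifold computation (your Step~1, then the projection formula for $\omega$ and for $\overline p$) is exactly the paper's. The route differs in two places.

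\textbf{General finite CW-complexes.} The paper does not argue via exactness of $\Gamma_c$. Instead it builds $B^n$ from the fat diagonal $\Delta$ by attaching free cells $\Sigma_n\times D^{d_i}$. Each successive fibre of the restriction maps on $\Sigma_n$-fixed sections is $\Sigma^{-d_i}$ of a single fibre of $\cE^\infty_n(X,Y)$, which is visibly $n$-homogeneous in $Y$. Homogeneity of $D_n(B,-)$ then follows because $n$-homogeneous spectrum-valued functors are closed under extensions. Your global argument is shorter and exhibits the multilinear structure directly; the paper's reduces everything to pointwise statements.

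\textbf{Closed manifolds.} The paper avoids what you call the main obstacle. It starts from the first form in \cref{lemma:layer-computation}, $\Gamma_c(\UConf_n(B);\cE^\infty_n(B,Y)_{\Sigma_n})$. It then applies Volpe's formula $q^{\sh}_!(-)\simeq q_\#(-\otimes\bS^{-T\UConf_n(B)})$ directly on the non-compact manifold $\UConf_n(B)$, with $\bS^{-T\UConf_n(B)}\simeq\omega^*\bS^{-TB^n}$. Since the action is free, no equivariant duality is needed, and non-compactness is absorbed by the six-functor formalism. Your proposed workaround would in any case need more than duality on $B^n$, because $\Delta$ is not a manifold. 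You would instead use Lefschetz duality for the compact manifold with boundary obtained by removing an open regular neighbourhood of $\Delta$.

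\textbf{A wrong justification in Step~2.} The cross effects of an $n$-homogeneous functor below degree $n$ do not vanish: for instance $\mathrm{cr}_1$ of $Y\mapsto\Sigma^\infty Y\wedge Y$ is the functor itself. So that is not why $P_{n-1}D_n(B,-)\simeq 0$. The correct reason is that your exactness remarks give $D_n(B,-)\simeq(\Delta^*L)_{h\Sigma_n}$, where
\[
L(Y_1,\dotsc,Y_n)=\Gamma_c\big(\Conf_n(B);\overline p_!(f^*(\Sigma^\infty_EY_1\boxtimes\cdots\boxtimes\Sigma^\infty_EY_n)\otimes\mathrm{ev}_!\bS^0)\big)
\]
is symmetric, and reduced and $1$-excisive in each variable. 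Functors of this form are $n$-homogeneous by Goodwillie's classification. Alternatively, for finite $B$ all your operations also commute with sequential colimits, hence with $P_{n-1}$.
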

\begin{proof}
We start by assuming that $B$ is a closed manifold. Let $q \colon \UConf_n(B) \to *$ be the projection map. We will work with spectrum valued sheaves and in particular use the exceptional pushforward and pullback functors $q^{\sh}_!$ and $q^{\sh,!}$, see \cite[\S~6]{Volpe} for a construction. In this setting, we may rewrite the conclusion of \cref{lemma:layer-computation} as
\begin{equation}\label{eqn:derivative-with-sheaves}
    D_n(B,Y) \simeq q^{\sh}_! ((\cE^\infty_n(B,Y)|_{\UConf_n(B)})_{\Sigma_n}).
\end{equation}
As $\UConf_n(B)$ is a manifold, the dualizing sheaf $q^{\sh,!}(\bS^0) \simeq \bS^{T\UConf_n(B)}$ is invertible and agrees with the stable spherical fibration associated to its tangent bundle \cite[Theorem~7.14]{Volpe}. Taking the left adjoints in the formula $q^{\sh,!}(-) \simeq q^{\sh,*}(-) \otimes q^{\sh,!}(\bS^0)$ \cite[Proposition~6.18]{Volpe}, we obtain
\[
    q_\#^\sh(- \otimes q^{\sh,!}(\bS^0)^{-1}) \simeq q^\sh_!(-),
\]
where $q_\#^\sh$ is the left adjoint to $q^{\sh,*}$, which exists because $\UConf_n(B)$ is locally of constant shape, see \cite[Corollary~3.21]{Volpe} and use that $\UConf_n(B)$ is open in the CW-complex $B^n/\Sigma_n$ (which is always hypercomplete \cite{MOHoyois}). As the pullback functor $q^{\sh,*}$ preserves local systems, i.e. parameterised spectra, it agrees with $q^*$. Hence $q_\#^\sh$ agrees with $q_!$ when evaluated on local systems. Therefore
\[
    D_n(B,Y) \simeq q^{\sh}_! ((\cE^\infty_n(B,Y)|_{\UConf_n(B)})_{\Sigma_n}) \simeq q_! \left((\cE^\infty_n(B,Y)|_{\UConf_n(B)})_{\Sigma_n} \otimes \bS^{-T\UConf_n(B)}\right).
\]
Before going further, it is useful to notice that
\[
    (\cE^\infty_n(B,Y)|_{\UConf_n(B)})_{\Sigma_n} \simeq \omega^*\cE^\infty_n(B,Y) \quad \text{and} \quad \bS^{-T\UConf_n(B)} \simeq \omega^*\bS^{-TB^n}.
\]
We will now manipulate this formula to observe that it is indeed a homogeneous functor in $Y$. From the definition of $\cE^\infty_n(B,Y)$ and using $\mathrm{ev}_!(S^0)$ as a fibrant replacement together with \cref{lemma:ev-fibrant-replacement}, we have an equivalence
\[
    \cE^\infty_n(B,Y) \simeq \overline{p}_!\left(f^*(\Sigma_E^\infty Y)^{\boxtimes n} \otimes \mathrm{ev}_!\bS^0 \right)
\]
of parameterised spectra. Hence
\begin{align*}
    D_n(B,Y) &\simeq q_! \omega^*\left( \overline{p}_!\left(f^*(\Sigma_E^\infty Y)^{\boxtimes n} \otimes \mathrm{ev}_!\bS^0 \right) \otimes \bS^{-TB^n} \right) \\
        &\simeq \overline{q}_! \omega_! \omega^*\left( \overline{p}_!\left(f^*(\Sigma_E^\infty Y)^{\boxtimes n} \otimes \mathrm{ev}_!\bS^0 \right) \otimes \bS^{-TB^n} \right) \\
        &\simeq \overline{q}_! \left(\overline{p}_!\left(f^*(\Sigma_E^\infty Y)^{\boxtimes n} \otimes \mathrm{ev}_!\bS^0 \right) \otimes \bS^{-TB^n} \otimes \omega_!\bS^0 \right) \\
        &\simeq \overline{q}_!\overline{p}_! \left( f^*(\Sigma_E^\infty Y)^{\boxtimes n} \otimes \mathrm{ev}_!\bS^0 \otimes \overline{p}^*\bS^{-TB^n} \otimes \overline{p}^*\omega_!\bS^0\right) \\
        &\simeq \colim_{f^*E^n \sslash \Sigma_n}\left( f^*(\Sigma_E^\infty Y)^{\boxtimes n} \otimes \mathrm{ev}_!\bS^0 \otimes \overline{p}^*\bS^{-TB^n} \otimes \overline{p}^*\omega_!\bS^0\right)
\end{align*}
where we have written $\overline{q} \colon B^n \sslash \Sigma_n \to *$ for the unique map, used $q = \overline{q} \circ \omega$ for the second equivalence, the projection formula for the third and fourth, and rewritten the last pushforward using homotopy orbits. As $f^* \colon \catSp^E \to \catSp^{f^*E}$ is linear, $D_n(B,Y)$ is visibly $n$-homogeneous.

Let us now treat the general case of a finite CW-complex $B$ by induction on cell attachments. More precisely, up to homotopy and using \cref{lemma:invariance-in-B} we may assume that $B^n$ is obtained from the diagonal $\Delta \subset B^n$ by gluing free cells of the form $\Sigma_n \times D^d$. We write
\[
    \Delta = L_0 \subset L_1 \subset L_2 \subset \dots \subset L_N = B^n
\]
for a sequence of such cell attachments where $L_i$ is obtained from $L_{i-1}$ by attaching a free cell $\Sigma_n \times D^{d_i}$. Then, the fibre of the restriction map
\begin{align*}
    \fib\left( \Gamma(L_i; f^*\cE^\infty_n(X,Y))^{\Sigma_n} \to \Gamma(L_{i-1}; f^*\cE^\infty_n(X,Y))^{\Sigma_n} \right) &\simeq \Gamma(D^{d_i}, S^{d_i-1}; f^*\cE^\infty_n(X,Y)) \\
    &\simeq \Sigma^{-d_i} \cE^\infty_n(X,Y)|_{f(p_i)}
\end{align*}
where $p_i \in L_i$ is a chosen centre in one of the discs $D^{d_i}$. This is an $n$-homogeneous functor by what we have established in the $0$-dimensional case above. By induction on $i$, it follows that
\[
    \fib\left( \Gamma(B^n; f^*\cE^\infty_n(X,Y))^{\Sigma_n} \to \Gamma(\Delta; f^*\cE^\infty_n(X,Y))^{\Sigma_n} \right)
\]
is an $n$-homogeneous functor of $Y$. The result then follows by \cref{lemma:layer-computation}.
\end{proof}

\begin{proposition}\label{proposition:second-variable-excision}
Let $B$ be a finite CW-complex. Then the functor $A_n(B,-) \colon \catS_{E//E} \to \catSp$ is $n$-polynomial in the sense of Goodwillie.
\end{proposition}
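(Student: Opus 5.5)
We argue by induction on $n$, using the fibre sequences
\[
    D_n(B,Y) \lra A_n(B,Y) \lra A_{n-1}(B,Y)
\]
of functors $\catS_{E//E} \to \catSp$. The base case is $n = 0$. Here $\widetilde{A}_0(B,Y)$ is the spectrum of empty tuples, so it is contractible, and $A_0(B,Y) \simeq \Sigma^\infty\Gamma(E)$. This functor is constant in $Y$, hence $0$-polynomial.

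For the inductive step, assume that $A_{n-1}(B,-)$ is $(n-1)$-polynomial. An $(n-1)$-polynomial functor is in particular $n$-polynomial. By \cref{corollary:homogeneity}, the fibre $D_n(B,-)$ is $n$-homogeneous, hence $n$-polynomial. We are working in spectra, which form a stable $\infty$-category, so fibre sequences are also cofibre sequences. Moreover, the class of $n$-excisive functors is closed under finite limits: a cube of fibres of cartesian cubes is cartesian, since limits commute with limits. It follows that $A_n(B,-)$ is $n$-excisive as soon as the two other terms of the fibre sequence are.

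Two technical points need checking. First, the sequence above must be a genuine (homotopy) fibre sequence of functors, natural in $Y$. This follows from the proof of \cref{lemma:layer-computation}, which shows that $\widetilde{A}_n(B,Y) \to \widetilde{A}_{n-1}(B,Y)$ is a levelwise fibration. Adding the summand $\Sigma^\infty\Gamma(E)$ to both sides does not change the fibre. Naturality in $Y$ comes from the point-set construction together with \cref{lemma:invariance-in-Y}, after restricting to ex-fibrations so that everything is homotopy invariant. Second, the homogeneity statement in \cref{corollary:homogeneity} must hold for any finite CW-complex $B$, not only for closed manifolds. That corollary proves exactly this via the free cell filtration of $B^n$ relative to the fat diagonal: every layer is of the form $\Sigma^{-d_i}\cE^\infty_n(X,Y)|_{f(p_i)}$, each of which is $n$-homogeneous, and extensions of $n$-homogeneous functors are $n$-excisive.

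The main obstacle is this homogeneity input, in particular the claim that the stalk $\cE^\infty_n(X,Y)|_{f(p_i)}$ is $n$-homogeneous in $Y$. By \cref{lemma:ev-fibrant-replacement}, this stalk is a fibre of $\overline{p}_!\big(f^*(\Sigma^\infty_E Y)^{\boxtimes n} \otimes \mathrm{ev}_!\bS^0\big)$. That is a colimit over a fibre of $f^*E^n$ of an $n$-fold external smash power of $\Sigma^\infty_E Y$, twisted by a fixed coefficient, and such a functor is visibly homogeneous of degree $n$. The finite filtration then passes homogeneity to the total fibre, because each step is a fibre sequence. Since this input is already recorded in \cref{corollary:homogeneity}, the proof of the proposition reduces to the formal induction described above.
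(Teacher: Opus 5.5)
Your proposal is correct and follows the paper's own argument: induction on $n$ along the fibre sequences $D_n(B,-)\to A_n(B,-)\to A_{n-1}(B,-)$, with the $n$-homogeneity of $D_n(B,-)$ for an arbitrary finite CW-complex $B$ supplied by \cref{corollary:homogeneity}. Your extension step is fine, though $A_n$ is not literally a finite limit of $D_n$ and $A_{n-1}$; the point is that $A_n(B,-)\simeq\fib\big(A_{n-1}(B,-)\to\Sigma D_n(B,-)\big)$ by stability, or equivalently that total fibres of cubes of spectra are exact.
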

\begin{proof}
This follows directly by induction on $n$ using the fibre sequences
\[
    D_n(B,-) \lra A_n(B,-) \lra A_{n-1}(B,-)
\]
and \cref{corollary:homogeneity}.
\end{proof}

\subsection{Agreement on points}

Ultimately, our goal is to show that the natural map~\eqref{eqn:goodwillie-tower-map}
\[
    \Sigma^\infty\left(\Gamma(B;f^*Y) \times_{\Gamma(B;f^*E)} \Gamma(E)\right) \lra A_n(B,Y)
\]
is the universal $n$-polynomial approximation in the sense of Goodwillie. Our strategy is to use the functoriality in the variable $B$ and induction on its dimension. In this section, we treat the case $\dim B = 0$. 

\begin{lemma}\label{lemma:splitting-An}
Let $B = \{b_1,\dotsc,b_l\}$ be a finite $0$-dimensional CW-complex with a map $f \colon B \to X$, and write $x_i = f(b_i)$. There is a natural equivalence
\[
    \widetilde{A}_n(B,Y) \simeq \bigoplus_{\substack{\varnothing \neq S \subset \{1,\dotsc,l\}\\ |S| \leq n}} \Sigma^\infty\left( \overline{\bigwedge}_{i \in S} Y_{x_i} \wedge_{\prod_{i \in S} E_{x_i}} \left(\Gamma(E) \sqcup \prod_{i \in S} E_{x_i}\right) \right) \bigg/ \prod_{i \in S} E_{x_i}.
\]
\end{lemma}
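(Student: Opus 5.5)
The plan is to compute the spaces of the sequential spectrum $\widetilde{A}_n(B,Y)$ levelwise by hand, using that $B$ is discrete, and then recognise the result as a finite product of suspension spectra. Since $B^i$ is the finite discrete set of tuples $(b_{j_1},\dotsc,b_{j_i})$, a section $s_i$ of $\cE^k_i(B,Y)\to B^i$ is simply a choice of a point in the fibre over each tuple. Over such a tuple the fibre of $f^*E^i$ is $\prod_{r} E_{x_{j_r}}$. The fibre of $B^i\times\Gamma(E)$ is $\Gamma(E)$, mapping in by evaluation, and the fibre of $(f^*Y)^{\ovwedge i}$ is $\overline{\bigwedge}_r Y_{x_{j_r}}$. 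Fibrewise collapsing over a point is ordinary collapsing. Hence the fibre of $\cE^k_i(B,Y)$ over an injective tuple indexed by $S$ is exactly $\Sigma^k$ of the space $Z_S := \big(\overline{\bigwedge}_{i\in S}Y_{x_i}\wedge_{\prod E_{x_i}}(\Gamma(E)\sqcup\prod E_{x_i})\big)/\prod E_{x_i}$ appearing in the statement, up to a reordering of factors.

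Next I analyse the compatibility conditions of \cref{definition:An}. Every tuple $t\in B^i$ with $i\le n$ factors uniquely as a surjection $\{1,\dotsc,i\}\twoheadrightarrow\{1,\dotsc,|S|\}$ followed by an injective tuple whose underlying set is the support $S\subset\{1,\dotsc,l\}$ of $t$, after fixing an ordering of $S$. The commuting squares for this surjection force $s_i(t)$ to be the image of $s_{|S|}$ at the ordered support under the structure map $\cE^k_{|S|}\to\cE^k_i$. That structure map is induced by diagonals such as $Y_x\to Y_x\wedge Y_x$; it is not an isomorphism, but we only need that it is a well-defined map. The squares for bijections force the values at different orderings of $S$ to correspond under the permutation action.

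Conversely, I would check that choosing one arbitrary point of $\Sigma^kZ_S$ for each nonempty $S$ with $|S|\le n$ determines a compatible family. This holds because the structure maps are functorial in composites of surjections, so any two ways of deducing a value agree. The upper bound $|S|\le n$ appears because an injective tuple of length $|S|$ exists in some $B^i$ with $i\le n$ exactly when $|S|\le n$. Together these give a natural homeomorphism
\[
\widetilde{A}^k_n(B,Y)\cong\prod_{\varnothing\neq S,\ |S|\le n}\Sigma^k Z_S.
\]
The structure maps $\Sigma\widetilde{A}^k_n\to\widetilde{A}^{k+1}_n$ of the sequential spectrum correspond to the canonical maps $\Sigma(\prod_S\Sigma^kZ_S)\to\prod_S\Sigma^{k+1}Z_S$.

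Finally I identify the sequential spectrum with levels $\Sigma^kZ_S$ and identity structure maps as $\Sigma^\infty Z_S$, so $\widetilde{A}_n(B,Y)$ is the levelwise product of these. In spectra, finite products agree with finite sums, and the levelwise product of sequential spectra computes the homotopy product. It remains to justify that the point-set fibres above have the correct homotopy type. Here I use that $\cE^k_i(B,Y)$ is an ex-fibration, as shown in the proof of \cref{lemma:invariance-in-B}, so its strict fibres are homotopy fibres. I also use \cref{lemma:ev-fibrant-replacement} to see that the strict smash product and collapse at each point compute the derived $Z_S$. I expect the main obstacle to be the bookkeeping in the compatibility analysis, namely checking that the values at non-injective tuples are forced and impose no further constraints, together with the naturality of the resulting identification in $Y$.
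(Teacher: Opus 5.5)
Your proposal is correct and follows essentially the same argument as the paper: the compatibility squares for surjections force every value to be the image, under the diagonal-induced structure maps, of a single point of $\Sigma^k Z_S$ attached to the underlying set $S$ with $|S| \leq n$, and conversely any such choice extends to a compatible family. You make explicit what the paper's two-sentence proof leaves implicit, namely the unique factorisation through the ordered support, functoriality for the converse, and the homotopical remarks about finite levelwise products and strict fibres of ex-fibrations.
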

\begin{proof}
A tuple of sections $(s_1,\dotsc,s_n)$ in the $k$th space of $\widetilde{A}_n(B,Y)$ is determined by the values taken by $s_n$, i.e. it is the data of a $|B^n| = l^n$-tuple of points in various fibres of $\cE^k_n(B,Y)$ subject to compatibility conditions. These conditions precisely impose that the value at a tuple $(t_1,\dotsc,t_n) \in B^n$ is determined by the underlying set $S = \{t_1,\dotsc,t_n\}$, which we identify with a non-empty subset of $\{1,\dotsc,l\}$ of size at most $n$.
\end{proof}

\begin{lemma}\label{lemma:splitting-suspension-product}
Let $x_1,\dotsc,x_l \in X$. There is a natural equivalence of parameterised spectra over $\prod_{i=1}^l E_{x_i}$
\[
    \Sigma^\infty \prod_{i = 1}^l Y_{x_i} \overset{\simeq}{\lra} \bigoplus_{\varnothing \neq S \subset \{1,\dotsc,l\}} \mathrm{pr}_S^* \Sigma^\infty \overline{\bigwedge}_{i \in S} Y_{x_i}
\]
where $\mathrm{pr}_S \colon \prod_{i=1}^l E_{x_i} \to \prod_{i \in S} E_{x_i}$ is the projection, and $\Sigma^\infty$ denotes the fibrewise suspension.
\end{lemma}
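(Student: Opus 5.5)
The plan is to reduce to the classical splitting of the suspension spectrum of a product of pointed spaces, applied fibrewise. Write $P = \prod_{i=1}^l E_{x_i}$ and regard $\prod_{i} Y_{x_i} \to P$ as a retractive space, with section given by the sections of the $Y_{x_i} \to E_{x_i}$. It is the external product of the retractive spaces $Y_{x_i} \to E_{x_i}$. The comparison map will be built by adding up, over nonempty $S$, the composites
\[
    \Sigma^\infty \prod_{i=1}^l Y_{x_i} \lra \Sigma^\infty \mathrm{pr}_S^*\prod_{i\in S} Y_{x_i} \lra \mathrm{pr}_S^*\Sigma^\infty \overline{\bigwedge}_{i\in S} Y_{x_i}.
\]
The first map is induced by the fibrewise projection $\prod_{i=1}^l Y_{x_i} \to \mathrm{pr}_S^*\prod_{i\in S}Y_{x_i}$, a map of retractive spaces over $P$. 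The second is the fibrewise quotient collapsing the fat wedge, using that $\mathrm{pr}_S^*$ commutes with $\Sigma^\infty$. The sum makes sense because $\catSp^P$ is stable. Naturality in $Y$ is then clear from the construction.

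Next I would show this map is an equivalence. Since equivalences in $\catSp^P = \mathsf{Fun}(P,\catSp)$ are detected pointwise, it suffices to check on fibres over each point $(e_1,\dotsc,e_l) \in P$. Pullback to a point commutes with $\Sigma^\infty$, $\mathrm{pr}_S^*$ and finite sums, and external smash products restrict on fibres to smash products of fibres. The ex-fibration hypotheses guarantee that the point-set fibres compute the homotopy fibres, by the fibre-wise properties in \cite{MaySigurdsson} already used in \cref{lemma:exfibration}. The fibre map thus becomes the standard map
\[
    \Sigma^\infty \prod_{i=1}^l F_i \lra \bigoplus_{\varnothing\neq S} \Sigma^\infty \bigwedge_{i\in S} F_i,
\]
where $F_i$ is the pointed fibre of $Y_{x_i} \to E_{x_i}$ at $e_i$ and the product is taken with its basepoint; note the left side is the reduced suspension spectrum $\Sigma^\infty$, not $\Sigma^\infty_+$.

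I would prove this classical splitting by induction on $l$ from the case $l=2$. That case is the stable splitting $\Sigma^\infty(A\times B) \simeq \Sigma^\infty A \oplus \Sigma^\infty B \oplus \Sigma^\infty(A\wedge B)$, which comes from the cofibre sequence $A\vee B \to A\times B \to A\wedge B$. This sequence splits stably because the two projections give a retraction $\Sigma^\infty(A\times B)\to\Sigma^\infty A\oplus\Sigma^\infty B \simeq \Sigma^\infty(A\vee B)$. One must check that the splitting maps agree with the collapse maps defined above; I expect this to be immediate from the construction. For the induction step, apply the $l=2$ case to $A=\prod_{i<l}F_i$ and $B=F_l$, and use that smash products distribute over wedges.

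The only potential obstacle is the homotopical bookkeeping: ensuring that the strict fibrewise products, smashes and quotients present the derived constructions in $\catSp^P$. I would handle this by assuming ex-fibrancy, which can be arranged via the ex-fibrant replacement of \cite[Section~8.3]{MaySigurdsson}, together with \cite[Propositions~8.2.1--8.2.3]{MaySigurdsson}.
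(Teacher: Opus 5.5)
Your proposal is correct and follows the paper's proof: you use the same map, assembled from the fibrewise projections $\prod_{i=1}^l Y_{x_i} \to \mathrm{pr}_S^*\overline{\bigwedge}_{i\in S}Y_{x_i}$, and check fibrewise that it is an equivalence by reducing to the standard stable splitting of $\Sigma^\infty$ of a product. The only difference is that you also sketch that classical splitting by induction on $l$, which the paper takes as known; in your induction step, what you actually use is $\Sigma^\infty(A\wedge B)\simeq\Sigma^\infty A\otimes\Sigma^\infty B$ together with distributivity of $\otimes$ over $\oplus$, rather than distributivity of smash over wedge, since $A=\prod_{i<l}F_i$ is not a wedge before stabilising.
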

\begin{proof}
For each $S$ we obtain a map $\prod_{i=1}^l Y_{x_i} \to \mathrm{pr}_S^* \overline{\bigwedge}_{i \in S} Y_{x_i}$ of retractive spaces over $\prod_{i=1}^l E_{x_i}$ by projecting from the cartesian product to the smash product. After taking suspension spectra, these assemble into a map into the direct sum:
\[
    \Sigma^\infty \prod_{i = 1}^l Y_{x_i} \lra \bigoplus_{\varnothing \neq S \subset \{1,\dotsc,l\}} \mathrm{pr}_S^* \Sigma^\infty \overline{\bigwedge}_{i \in S} Y_{x_i}
\]
which we can check is an equivalence fibrewise: it reduces to the standard splitting of the suspension of a product.
\end{proof}

We write $\conn(A)$ for the connectivity of a space $A$, and $\conn(A/B)$ for the connectivity of a map $A \to B$ (which is also equal to $\conn(\fib(A \to B))+1$).

\begin{proposition}\label{proposition:zero-dimensional-connectivity}
Let $B$ be a finite $0$-dimensional CW-complex with a map $f \colon B \to X$. Then the map~\eqref{eqn:goodwillie-tower-map}
\[
    \Sigma^\infty\big( \Gamma(B; f^*Y) \times_{\Gamma(B; f^*E)} \Gamma(E) \big)  \lra A_n(B,Y)
\]
is $(n+1)\conn(Y/E)$-connected.
\end{proposition}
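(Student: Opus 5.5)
We reduce to a fibrewise computation over $\prod_{i} E_{x_i}$ and identify the source of \eqref{eqn:goodwillie-tower-map} with the full, untruncated version of its target. Write $B = \{b_1,\dotsc,b_l\}$ and $x_i = f(b_i)$. Then $\Gamma(B;f^*Y) = \prod_i Y_{x_i}$ and $\Gamma(B;f^*E) = \prod_i E_{x_i}$. Hence the source is the pullback
\[
    P := \prod_{i} Y_{x_i} \times_{\prod_i E_{x_i}} \Gamma(E).
\]
We regard $P$ as the total space of the pullback of the retractive space $\prod_i Y_{x_i} \to \prod_i E_{x_i}$ along the evaluation map $\mathrm{ev} \colon \Gamma(E) \to \prod_i E_{x_i}$; this is an ex-fibration by the standard results of \cite{MaySigurdsson}.

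The first step uses \cref{lemma:splitting-suspension-product}. We write $\Sigma^\infty P$ as the colimit over $\Gamma(E)$ of the fibrewise suspension spectrum $\mathrm{ev}^*\Sigma^\infty_{\prod E_{x_i}}\prod_i Y_{x_i}$. Since the retractive space over $\Gamma(E)$ has the canonical section $\Gamma(E) \to P$, we have
\[
    \Sigma^\infty_+$-type splitting $\Sigma^\infty P \simeq \Sigma^\infty\Gamma(E) \oplus \colim_{\Gamma(E)} \mathrm{ev}^*\Sigma^\infty_{\Gamma(E)}\prod_i Y_{x_i}.
\]
Applying \cref{lemma:splitting-suspension-product} and pulling back along $\mathrm{ev}$ then gives
\[
    \Sigma^\infty P \simeq \Sigma^\infty\Gamma(E) \oplus \bigoplus_{\varnothing\ne S\subset\{1,\dots,l\}} \colim_{\Gamma(E)} \mathrm{ev}^*\mathrm{pr}_S^*\Sigma^\infty\overline{\bigwedge}_{i\in S}Y_{x_i}.
\]
Each summand is identified, via the projection formula, with
\[
    \Sigma^\infty\Big(\overline{\bigwedge}_{i\in S}Y_{x_i}\wedge_{\prod_{S}E_{x_i}}\big(\Gamma(E)\sqcup\textstyle\prod_S E_{x_i}\big)\Big)\Big/\textstyle\prod_S E_{x_i}.
\]
This is exactly the $S$-summand of \cref{lemma:splitting-An}, where $\Gamma(E)$ maps to $\prod_S E_{x_i}$ through $\mathrm{ev}$ followed by $\mathrm{pr}_S$. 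Next we check that under these identifications the map \eqref{eqn:goodwillie-tower-map} becomes the projection onto the summands with $|S|\le n$. This requires unwinding formula \eqref{eqn:adjointmap}: the component of a section at a tuple with underlying set $S$ is the smash $\bigwedge_{i \in S} s(b_i)$ together with $g$, which is precisely the $S$-component of the splitting map of \cref{lemma:splitting-suspension-product}. The extra summand $\Sigma^\infty\Gamma(E)$ matches the projection term added in \eqref{eqn:goodwillie-tower-map}.

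It follows that the fibre of \eqref{eqn:goodwillie-tower-map} is the direct sum of the summands with $|S|\ge n+1$, so it remains to bound their connectivity. Set $c=\conn(Y/E)$, so each fibre of $Y_{x_i}\to E_{x_i}$ is $(c-1)$-connected. Then the fibres of the retractive space $\overline{\bigwedge}_{i\in S}Y_{x_i}$ over $\prod_S E_{x_i}$ are $|S|$-fold smashes of $(c-1)$-connected spaces. These smashes are $(|S|c-1)$-connected, so the fibrewise suspension spectrum is fibrewise $(|S|c-1)$-connected. Pulling back along $\mathrm{ev}$ and taking the colimit over $\Gamma(E)$ preserves this lower bound, since colimits of connective-up-to-shift spectra remain so. Each summand with $|S|\ge n+1$ is therefore $((n+1)c-1)$-connected, and the map is $(n+1)c$-connected, as claimed. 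This uses the paper's convention that $\conn$ of a map is $\conn$ of its fibre plus one.

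The main obstacle is the compatibility check in the second step, namely that the point-set map \eqref{eqn:goodwillie-tower-map} really agrees with the abstract splitting maps. To handle it, I would take the fibrant replacement $W$ of \cref{lemma:ev-fibrant-replacement} to be the path-space replacement of $\mathrm{ev}$, so that the quotient $Z\wedge_{\prod E}(\Gamma(E)\sqcup\prod E)/\prod E$ models $\colim_{\Gamma(E)}\mathrm{ev}^*(-)$. With this model, the naturality of the smash-versus-product projections can be verified fibrewise.
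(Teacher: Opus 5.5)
Your proposal is correct and follows essentially the same route as the paper: you identify the source with $\colim_{\Gamma(E)}$ of the pulled-back fibrewise suspension spectrum, split it via \cref{lemma:splitting-suspension-product} and the projection formula into the $S$-summands of \cref{lemma:splitting-An}, recognise \eqref{eqn:goodwillie-tower-map} as the projection onto the summands with $|S|\le n$, and bound the remaining summands as $(|S|\conn(Y/E)-1)$-connected. The differences are cosmetic. You split off $\Sigma^\infty\Gamma(E)$ first using the canonical section, whereas the paper works with $\Sigma^\infty_+$ and keeps $S=\varnothing$ as a summand. Your path-space sketch of the compatibility check is a little more explicit than the paper, which simply asserts that identification.
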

\begin{proof}
Let us write $B = \{b_1,\dotsc,b_l\}$ and $x_i = f(b_i)$, so that $\Gamma(B; f^*Y) = \prod_{i = 1}^l Y_{x_i}$ and $\Gamma(B; f^*E) = \prod_{i = 1}^l E_{x_i}$. It will be clearer to distinguish between a space in a slice category and its associated unstraightened functor. So let us write $\cY_i \colon E_{x_i} \to \catS_*$ for the functor associated to the ex-fibration $Y_{x_i} \to E_{x_i}$, so that the external product $\prod_{i=1}^l \cY_i$ corresponds to the bundle $\prod_{i = 1}^l Y_{x_i} \to \prod_{i = 1}^l E_{x_i}$. By definition,
\[
    \Gamma(B; f^*Y) \times_{\Gamma(B; f^*E)} \Gamma(E) = \mathrm{ev}^*(\Gamma(B; f^*Y))
\]
where $\mathrm{ev} \colon \Gamma(E) \to \Gamma(B; f^*E)$ is the restriction map along $f$. Hence $\mathrm{ev}^*\prod_{i=1}^l \cY_i$ corresponds to the bundle $\Gamma(B; f^*Y) \times_{\Gamma(B; f^*E)} \Gamma(E) \to \Gamma(E)$. Expressing that the total space of a fibration is the colimit (in the category of unpointed spaces) of its associated functor, we obtain an equivalence
\[
    \prod_{i = 1}^l Y_{x_i} \times_{\prod_{i = 1}^l E_{x_i}} \Gamma(E) \simeq \colim_{\Gamma(E)} \mathrm{ev}^*\prod_{i=1}^l \cY_i.
\]
Upon taking suspension spectra, which commutes with the colimit, we get
\[
    \Sigma^\infty_+\left(\prod_{i = 1}^l Y_{x_i} \times_{\prod_{i = 1}^l E_{x_i}} \Gamma(E)\right) \simeq \colim_{\Gamma(E)} \mathrm{ev}^*\Sigma^\infty_+\left(\prod_{i=1}^l \cY_i\right) \simeq \bigoplus_{S \subset \{1,\dotsc,l\}} \colim_{\Gamma(E)} \mathrm{ev}^*\mathrm{pr}_S^* \Sigma^\infty \overline{\bigwedge}_{i \in S} \cY_i
\]
where the second equivalence follows from \cref{lemma:splitting-suspension-product}. Be aware that we have to add a basepoint before taking the suspension spectrum, hence $S = \varnothing$ is also allowed in the sum. The projection formula yields
\[
    \colim_{\Gamma(E)} \mathrm{ev}^*\mathrm{pr}_S^* \Sigma^\infty \overline{\bigwedge}_{i \in S} \cY_i \simeq \colim_{\prod_{i \in S} E_{x_i}} \Sigma^\infty \overline{\bigwedge}_{i \in S} \cY_i \otimes (\mathrm{pr}_S \circ \mathrm{ev})_!\bS^0.
\]
This is a colimit of parameterised suspension spectra, so we may compute it first in parameterised pointed spaces and then suspend. Noticing that the total space of an ex-fibration modulo its section is the colimit of its associated parameterised pointed space, we get
\begin{align*}
    \colim_{\prod_{i \in S} E_{x_i}} \Sigma^\infty \overline{\bigwedge}_{i \in S} \cY_i \otimes (\mathrm{pr}_S \circ \mathrm{ev})_!\bS^0 &\simeq \Sigma^\infty \left(\overline{\bigwedge}_{i \in S} Y_{x_i} \wedge_{\prod_{i \in S} E_{x_i}} (\mathrm{pr}_S \circ \mathrm{ev})_!S^0 \right)/ \prod_{i \in S} E_{x_i} \\
    &= \Sigma^\infty \left(\overline{\bigwedge}_{i \in S} Y_{x_i} \wedge_{E_S} (\mathrm{pr}_S \circ \mathrm{ev})_!S^0 \right)/ E_S,
\end{align*}
where we write $E_S = \prod_{i \in S} E_{x_i}$ to lighten the formulas.
Finally, by definition of the $(-)_!$ functor, $(\mathrm{pr}_S \circ \mathrm{ev})_!S^0$ can be computed by taking a fibrant replacement of the retractive space
\[
    B^i \times \Gamma(E) \sqcup E_S \to E_S.
\]
Hence, by \cref{lemma:ev-fibrant-replacement}, we obtain
\[
    \left(\overline{\bigwedge}_{i \in S} Y_{x_i} \wedge_{E_S} (\mathrm{pr}_S \circ \mathrm{ev})_!S^0 \right)/ E_S \simeq \left(\overline{\bigwedge}_{i \in S} Y_{x_i} \wedge_{E_S} (\Gamma(E) \sqcup E_S)\right)/ E_S.
\]
Summing up, we have obtained an equivalence
\[
    \Sigma^\infty\big( \Gamma(B; f^*Y) \times_{\Gamma(B; f^*E)} \Gamma(E) \big) \simeq \Sigma^\infty\Gamma(E) \oplus \bigoplus_{\varnothing \neq S \subset \{1,\dotsc,l\}} \Sigma^\infty \left(\overline{\bigwedge}_{i \in S} Y_{x_i} \wedge_{E_S} (\Gamma(E) \sqcup E_S)\right)/ E_S.
\]
Together with \cref{lemma:splitting-An}, we see that our map~\eqref{eqn:goodwillie-tower-map}
\[
    \Sigma^\infty\big( \Gamma(B; f^*Y) \times_{\Gamma(B; f^*E)} \Gamma(E) \big) \lra A_n(B,Y)
\]
can be identified with the projection onto the summands with $|S| \leq n$, hence its fibre is
\[
    \bigoplus_{S \subset \{1,\dotsc,l\}, \ |S| > n} \Sigma^\infty \left(\overline{\bigwedge}_{i \in S} Y_{x_i} \wedge_{E_S} (\Gamma(E) \sqcup E_S)\right)/ E_S.
\]
The summand corresponding to $S$ is $(|S|\conn(Y/E)-1)$-connected, hence the direct sum is itself $((n+1)\conn(Y/E)-1)$-connected.
\end{proof}

\subsection{Cell attachments}

We will need the following crude connectivity estimate.
\begin{lemma}\label{lemma:crude-connectivity}
Let $A \to B$ be a map between finite CW-complexes of dimension $\leq d$. Then the restriction map
\[
    \Gamma(Y|_{B}) \times_{\Gamma(E|_{B})} \Gamma(E) \lra \Gamma(Y|_{A}) \times_{\Gamma(E|_{A})} \Gamma(E)
\]
is at least $(\conn(Y/E)-d-1)$-connected.
\end{lemma}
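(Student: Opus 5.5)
The plan is to reduce the claim to a fibrewise statement about section spaces over $\Gamma(E)$. The basic input is obstruction theory: for a Serre fibration $Z \to W$ with $c$-connected fibres over a CW-complex $B$ of dimension $\leq d$, the map of section spaces $\Gamma(B;Z) \to \Gamma(B;W)$ is $(c-d)$-connected. Here I write $c = \conn(Y/E)$, so that the homotopy fibres of $Y \to E$ are $(c-1)$-connected.

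First I rewrite the map. Both source and target are spaces over $\Gamma(E)$, and the map is a map over $\Gamma(E)$. Its homotopy fibre over a point can therefore be computed fibrewise over $\sigma \in \Gamma(E)$. Since $Y \to E$ is an ex-fibration, the projection $\Gamma(Y|_B) \times_{\Gamma(E|_B)} \Gamma(E) \to \Gamma(E)$ is a fibration whose fibre over $\sigma$ is the space of lifts $\Gamma(B; (f|_B)^*\sigma^*Y)$. Here $\sigma^*Y \to X$ is a fibration with $(c-1)$-connected fibres, and the same holds over $A$. Hence the map in question is a map of fibrations over $\Gamma(E)$. It suffices to show that for each $\sigma$ the restriction
\[
\Gamma(B; \sigma^*Y|_B) \lra \Gamma(A; \sigma^*Y|_A)
\]
is $(c-d-1)$-connected, by the long exact sequence comparison (five lemma on fibrations over the same base). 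Both spaces are non-empty since $Y$ has a section.

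Second, I prove the fibrewise estimate. Replacing $A \to B$ by its mapping cylinder (allowed by the homotopy invariance in the spirit of \cref{lemma:invariance-in-B}), I may assume $A \subset B$ is a relative CW-inclusion. The mapping cylinder has dimension $\leq d+1$, and the relative cells have dimension at most $d+1$. Restriction along a cofibration is a fibration, and its fibre is a relative section space $\Gamma(B, A; \sigma^*Y)$ of sections fixed on $A$. I filter by skeleta of the relative cells. Each attached cell of dimension $k \leq d+1$ contributes a fibre equivalent to $\Omega^k$ of a $(c-1)$-connected fibre, which is $(c-1-k)$-connected. So the relative section space is at least $(c-d-2)$-connected, and the restriction map is $(c-d-1)$-connected.

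I would take care with the bookkeeping of $+1$'s, since this is where a crude bound could slip, and the mapping cylinder raises the dimension by one; this is where the $-1$ in the statement comes from. The main subtlety is avoiding non-surjectivity on $\pi_0$ and basepoint issues. For that, use the section $X \to E \to Y$ so that every fibre is non-empty and all homotopy fibres are taken over actual points. The obstruction-theoretic connectivity of fibres, computed via a finite induction over cells, then gives the claim.
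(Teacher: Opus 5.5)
Your proof is correct, but it is organised differently from the paper's.

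\textbf{The paper's route.} The paper does not pass to fibres over $\Gamma(E)$ and uses no mapping cylinder. Its only input is the absolute estimate that $\Gamma(Y|_A)\to\Gamma(E|_A)$ and $\Gamma(Y|_B)\to\Gamma(E|_B)$ are $(\conn(Y/E)-d)$-connected. It pulls the first map back along $\Gamma(E|_B)\to\Gamma(E|_A)$. It then applies 2-out-of-3 to the factorisation $\Gamma(Y|_B)\to\Gamma(Y|_A)\times_{\Gamma(E|_A)}\Gamma(E|_B)\to\Gamma(E|_B)$, which is where one degree is lost. A final pullback along $\Gamma(E)\to\Gamma(E|_B)$ finishes.

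\textbf{Your route.} You first reduce, via the fibrations over $\Gamma(E)$, to the restriction $\Gamma(B;\sigma^*Y)\to\Gamma(A;\sigma^*Y)$ for a single section $\sigma$. You then prove a relative obstruction-theory estimate directly, and your $-1$ comes from the $(d+1)$-cells $e\times(0,1)$ of the mapping cylinder.

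\textbf{Comparison.} The paper's argument is more formal. It avoids the point-set care needed to replace $A\to B$ by a cofibration over $X$ and to know that restriction along it is a fibration. Yours makes the source of the $-1$ transparent, and shows it is sharp: take $A=S^d$, $B=*$ and a trivial fibration with fibre $K(\bZ,c)$, where the homotopy fibre is $\Omega^{d+1}K(\bZ,c)$. The same cell count with $A=\varnothing$ also recovers the absolute estimate that the paper quotes as a direct consequence of the assumptions.

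\textbf{Two small remarks.}
\begin{enumerate}
\item Your opening ``basic input'' has an off-by-one error in the safe direction. With $c$-connected fibres the section-space map is $(c-d+1)$-connected, not just $(c-d)$-connected. It is harmless, since you later redefine $c$ and redo the cell count without using it.
\item For the reduction over $\Gamma(E)$, it is cleaner to observe that the homotopy fibre of the total map at a point lying over $\sigma$ is the homotopy fibre of the fibre map over $\sigma$. This disposes of the $\pi_0$ and $\pi_1$ issues of a five-lemma argument at once.
\end{enumerate}
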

\begin{proof}
We will use the observation that the pullback of an $i$-connected map is $i$-connected. For the duration of this proof, we write $(-)|_B$, resp. $(-)|_A$, for the pullback to $B$, resp. $A$. Consider the diagram of pullback squares:
\[
\begin{tikzcd}
|[label={[label distance=-2mm]-45:\lrcorner}]|\Gamma(Y|_{B}) \times_{\Gamma(E|_{B})} \Gamma(E) \arrow[d] \arrow[r] & |[label={[label distance=-2mm]-45:\lrcorner}]|\Gamma(Y|_{A}) \times_{\Gamma(E|_{A})} \Gamma(E) \arrow[d] \arrow[r]      & \Gamma(E) \arrow[d]      \\
\Gamma(Y|_{B}) \arrow[r]    & |[label={[label distance=-2mm]-45:\lrcorner}]|\Gamma(Y|_{A}) \times_{\Gamma(E|_{A})} \Gamma(E|_{B}) \arrow[r] \arrow[d] & \Gamma(E|_{B}) \arrow[d] \\
 & \Gamma(Y|_{A}) \arrow[r]   & \Gamma(E|_{A})          
\end{tikzcd}
\]
As a direct consequence of our assumptions, $\Gamma(Y|_{A}) \to \Gamma(E|_{A})$ is $(\conn(Y/E)-d)$-connected. Hence the pulled back map
\[
    \Gamma(Y|_{A}) \times_{\Gamma(E|_{A})} \Gamma(E|_{B}) \lra \Gamma(E|_{B})
\]
is also $(\conn(Y/E)-d)$-connected. Likewise, the middle horizontal composition $\Gamma(Y|_{B}) \to \Gamma(E|_{B})$ is $(\conn(Y/E)-d)$-connected. A 2-out-of-3 computation then shows that
\[
    \Gamma(Y|_{B}) \lra \Gamma(Y|_{A}) \times_{\Gamma(E|_{A})} \Gamma(E|_{B})
\]
is $(\conn(Y/E)-d-1)$-connected. Therefore the pulled back map
\[
    \Gamma(Y|_{B}) \times_{\Gamma(E|_{B})} \Gamma(E) \lra \Gamma(Y|_{A}) \times_{\Gamma(E|_{A})} \Gamma(E)
\]
is $(\conn(Y/E)-d-1)$-connected.
\end{proof}

\begin{proposition}\label{proposition:order-n-agreement}
Let $B$ be a $d$-dimensional finite CW-complex with a map $f \colon B \to X$. Then the natural map
\[
    \Sigma^\infty\big( \Gamma(B; f^*Y) \times_{\Gamma(B; f^*E)} \Gamma(E) \big)  \lra A_n(B,Y)
\]
is $(n+1)(\conn(Y/E)-d)$-connected. In particular, when $f \colon B = X$ is the identity, the map
\[
    \Sigma^\infty\Gamma(X;Y) \lra A_n(X,Y)
\]
is $(n+1)(\conn(Y/E)-\dim(X))$-connected.
\end{proposition}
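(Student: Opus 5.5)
We argue by induction on the number of cells of $B$, following Malkiewich's approach. Write $G(B) = \Sigma^\infty\big(\Gamma(B;f^*Y)\times_{\Gamma(B;f^*E)}\Gamma(E)\big)$ and let $\eta_B \colon G(B) \to A_n(B,Y)$ denote the map~\eqref{eqn:goodwillie-tower-map}; both sides are contravariant homotopy functors in $B \in \catS_{/X}$, by \cref{lemma:invariance-in-B}. Since the statement is homotopy invariant, we may assume $B$ is built from $B'$ by attaching a single $d'$-cell with $d' \leq d$, that is, by a strict pushout $B = B' \cup_{S^{d'-1}} D^{d'}$, where $B'$ has fewer cells and dimension $\leq d$. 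The base case, a finite set of points, is \cref{proposition:zero-dimensional-connectivity}, which gives connectivity $(n+1)\conn(Y/E) \geq (n+1)(\conn(Y/E)-d)$.

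For the inductive step we consider the square obtained by applying $G$ and $A_n(-,Y)$ to the pushout square $B \leftarrow B' ,\ D^{d'} \leftarrow S^{d'-1}$. The disc $D^{d'}$ is contractible onto a point, so $\eta_{D^{d'}}$ is $(n+1)\conn(Y/E)$-connected by the base case. The sphere $S^{d'-1}$ and $B'$ have fewer cells (after choosing a minimal cell structure on the sphere, or by a separate induction on dimension for spheres), so the inductive hypothesis gives connectivity $(n+1)(\conn(Y/E)-d+1)$ and $(n+1)(\conn(Y/E)-d)$ respectively. Since $A_n(-,Y)$ sends strongly cocartesian cubes to cartesian ones by \cref{proposition:first-variable-excision}, a pushout square in particular becomes cartesian; a $2$-cube is strongly cocartesian, and $1$-excisive implies $n$-excisive in the appropriate sense. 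More precisely, we use that $A_n(-,Y)$ applied to a homotopy pushout square is cartesian. We therefore get
\[
A_n(B,Y) \simeq A_n(B',Y) \times_{A_n(S^{d'-1},Y)} A_n(D^{d'},Y).
\]
Comparing this with the square for $G$, it remains to control how far the $G$-square is from being cartesian.

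This is the main obstacle. The space-level square $\Gamma(B)\times_{\Gamma(E|_B)}\Gamma(E) \to \cdots$ is strictly cartesian, because sections glue. The difficulty is that $\Sigma^\infty$ does not preserve pullbacks, so we must bound the total fibre of the $G$-square. Here we plan to invoke the dual Blakers--Massey theorem for $\Sigma^\infty$ applied to a cartesian square of spaces: if the two maps into the corner, the restrictions to $S^{d'-1}$ from $B'$ and from $D^{d'}$, are $k_1$- and $k_2$-connected, then $\Sigma^\infty$ of the square is $(k_1+k_2+\text{const})$-cartesian. \cref{lemma:crude-connectivity} gives $k_1, k_2 \geq \conn(Y/E)-d$. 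This yields cartesianness of order roughly $2(\conn(Y/E)-d)$, which falls short of $(n+1)(\ldots)$ for $n \geq 2$.

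To reach the bound $(n+1)(\conn(Y/E)-d)$, we plan to avoid this crude estimate and instead compare the fibres directly. The total fibre of the $A_n$-square vanishes. The total fibre of the $G$-square we identify with the total fibre of the analogous square for a cell $(D^{d'},S^{d'-1})$ relative to the corner, which is the fibre of restriction $\Gamma(D^{d'},S^{d'-1};\ldots)$, essentially the $d'$-fold loops of the fibre of the pointwise map. Concretely, we pass to fibres over the restriction to $B'$: the map of fibres $\fib(G(B)\to G(B')) \to \fib(A_n(B,Y)\to A_n(B',Y))$ is, fibrewise over $\Gamma(E)$, a $\Omega^{d'}$-type relative section object of the pointwise comparison. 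That pointwise comparison is $(n+1)\conn(Y/E)$-connected by \cref{proposition:zero-dimensional-connectivity}, and applying $\Omega^{d'}$ together with the fact that the relevant fibres are $(n+1)$-fold smash-type objects costs at most $(n+1)d'$ in connectivity. Combining this with the inductive bound on $\eta_{B'}$ via the five lemma on the fibre sequences gives the claim. The special case $B = X$ is then immediate.
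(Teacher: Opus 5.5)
The proposal has a genuine gap at the step where you assert that $A_n(-,Y)$ sends a homotopy pushout square to a cartesian square. \cref{proposition:first-variable-excision} only says that $A_n(-,Y)$ is $n$-excisive in $B$. $1$-excisive implies $n$-excisive, not conversely, and $A_n(-,Y)$ is not $1$-excisive for $n \geq 2$.

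A single $0$-cell attachment already shows this. Take $X = E = *$ and $\{a,b\}$ the pushout of $\{a\} \leftarrow \varnothing \to \{b\}$. Then \cref{lemma:splitting-An} gives $A_n(\{a,b\},Y) \simeq \Sigma^\infty Y \oplus \Sigma^\infty Y \oplus \Sigma^\infty (Y \wedge Y)$. The pullback, by contrast, is $\Sigma^\infty Y \oplus \Sigma^\infty Y$; the two differ for, e.g., $Y$ a sphere. So the total fibre of your $A_n$-square does not vanish, and you are left comparing two non-cartesian squares, which is the original problem.

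Your fallback does not fill this gap. $\fib(G(B) \to G(B'))$ is a fibre of suspension spectra, not $\Sigma^\infty$ of a space-level fibre. It is therefore not an ``$\Omega^{d'}$-type relative section object'' of the pointwise comparison. The claimed loss of at most $(n+1)d'$ is asserted rather than derived, and it is essentially the whole content of the proposition. On the $A_n$ side, the fibre of restriction involves sections over $B^i$ relative to $(B')^i$ for all $i \leq n$, with cells of dimension up to $id'$, so it is not an $\Omega^{d'}$-construction either.

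The missing idea is to decompose $B$ by an $(n+1)$-cube rather than a square.

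\begin{enumerate}
\item The paper inducts on $\dim B$. Using punctured cells (Malkiewich's layer cake), it writes a $d$-dimensional $B$ as the terminal vertex of a strongly cocartesian $(n+1)$-cube $\cP$ whose other vertices have dimension $< d$.
\item Now \cref{proposition:first-variable-excision} genuinely applies, so $A_n(\cP,Y)$ is cartesian.
\item The cube of section spaces is strongly cartesian. Its $n+1$ extremal edges, the restrictions from $\cP(\{i\})$ to $\cP(\varnothing)$, are $(\conn(Y/E)-d)$-connected by \cref{lemma:crude-connectivity}.
\item The (dual) Blakers--Massey theorem makes this cube $(n+(n+1)(\conn(Y/E)-d))$-cocartesian. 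After $\Sigma^\infty$ it is therefore $(n+1)(\conn(Y/E)-d)$-cartesian as a cube of spectra.
\item The inductive hypothesis at the lower-dimensional vertices gives $(n+1)(\conn(Y/E)-d+1)$-connectivity. Passing to the limit of the punctured cube costs at most $n+1$, which yields the claim.
\end{enumerate}

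The factor $n+1$ comes from summing connectivities over $n+1$ edges. That is exactly what a square, with only two edges, cannot provide, as you yourself observed.
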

\begin{proof}
We will proceed  by induction on the dimension of $B$. The zero-dimensional case was treated in \cref{proposition:zero-dimensional-connectivity}. Suppose then that the conclusion holds for finite CW-complexes of dimension $< d$. Let $B$ be a $d$-dimensional finite CW-complex. By attaching punctured cells\footnote{See for example the ``layer cake space" of \cite[Proof of Proposition~2.3 p.~534]{Malkiewich}. Though beware the little misprint: the formula should also include the entire boundary of the cube.}, $B$ can be written as the pushout of a strongly cocartesian $(n+1)$-cube
\[
    \cP\colon \{1,\dotsc,n+1\} \supset S \longmapsto \cP(S) 
\]
with $\cP(\{1,\dotsc,n+1\}) = B$ and such that $\cP(S)$ is homotopy equivalent to a finite CW-complex of dimension $<d$ for $S \neq \{1,\dotsc,n+1\}$. By \cref{proposition:first-variable-excision}, the cube
\[
    A_n(\cP, Y) \colon \{1,\dotsc,n+1\} \supset S \longmapsto A_n(\cP(S),Y)
\]
is cartesian. On the other hand, the cube
\[
    \Gamma(Y|_{\cP}) \times_{\Gamma(E|_{\cP})} \Gamma(E) \colon \{1,\dotsc,n+1\} \supset S \longmapsto \Gamma(Y|_{\cP(S)}) \times_{\Gamma(E|_{\cP(S)})} \Gamma(E)
\]
is strongly cartesian. The extremal legs of the cube
\[
    \Gamma(Y|_{\cP(\{i\})}) \times_{\Gamma(E|_{\cP(\{i\})})} \Gamma(E) \lra \Gamma(Y|_{\cP(\varnothing)}) \times_{\Gamma(E|_{\cP(\varnothing)})} \Gamma(E)
\]
are $(\conn(Y/E)-d)$-connected by \cref{lemma:crude-connectivity}. Hence the Blakers--Massey theorem \cite[Theorem~2.4]{Goodwillie2} shows that the cube is also $(n+(n+1)(\conn(Y/E)-d))$-cocartesian. Taking suspension spectra, we get a cube which is also $(n+(n+1)(\conn(Y/E)-d))$-cocartesian. By \cite[Remark~1.19]{Goodwillie2}, such a cube of spectra is also $(n+1)(\conn(Y/E)-d)$-cartesian. By induction, for $S \neq \{1,\dotsc,n+1\}$, the map
\[
    \Sigma^\infty\left( \Gamma(Y|_{\cP(S)}) \times_{\Gamma(E|_{\cP(S)})} \Gamma(E) \right) \lra A_n(\cP(S), Y)
\]
is $(n+1)(\conn(Y/E)-d+1)$-connected. Hence the map on the limit
\[
    \Sigma^\infty\left( \Gamma(Y|_{B}) \times_{\Gamma(E|_{B})} \Gamma(E) \right) \lra A_n(B, Y)
\]
has connectivity $(n+1)(\conn(Y/E)-d+1) - (n+1) = (n+1)(\conn(Y/E)-d)$.
\end{proof}

\subsection{Conclusion}

We summarise our results in the following theorem. It should be of general interest for applications outside of the scope of this paper.
\begin{theorem}\label{theorem:goodwillie-approximation}
Let $X$ be a finite CW-complex. Let $E \in \catS_{X//X}$ be a retractive space over $X$. The Goodwillie tower of the functor
\begin{align*}
    F \colon \catS_{E//E} &\lra \catSp \\
        Y &\longmapsto \Sigma^\infty\Gamma(X;Y)
\end{align*}
converges for retractive spaces $Y \in \catS_{E//E}$ such that $\conn(Y/E) > \dim(X)$. Furthermore, the $n$th polynomial approximation is given by the functor $A_n(X,-)$ constructed in \cref{definition:An}. When $X$ is a closed manifold, the $n$th derivative is given by the parameterised spectrum
\[
    \mathrm{ev}_!\bS^0 \otimes p^*\bS^{-TX^n} \otimes p^*\omega_!\bS^0 \in \catSp^{E^n \sslash \Sigma_n}
\]
where $p \colon E^n \sslash \Sigma_n \to X^n \sslash \Sigma_n$ is the projection, $\mathrm{ev} \colon X^n \sslash \Sigma_n \times \Gamma(E) \to E^n \sslash \Sigma_n$ is the evaluation map, and $\omega \colon \UConf_n(X) \to X^n \sslash \Sigma_n$ is the inclusion of the unordered configuration space.
\end{theorem}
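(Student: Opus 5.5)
The theorem assembles the preceding results; the plan is to check Goodwillie's characterisation of the $n$-excisive approximation. We work with $F(Y)=\Sigma^\infty\Gamma(X;Y)$ on $\catS_{E//E}$. Taking $B=X$ and $f=\mathrm{id}$, the map~\eqref{eqn:goodwillie-tower-map} becomes a natural transformation $F \to A_n(X,-)$, since $\Gamma(X;Y)\times_{\Gamma(E)}\Gamma(E)=\Gamma(X;Y)$. Its target is $n$-excisive by \cref{proposition:second-variable-excision}. By \cref{proposition:order-n-agreement}, it is $(n+1)(\conn(Y/E)-\dim X)$-connected.

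So $F$ and $A_n(X,-)$ agree to order $n$ in Goodwillie's sense: the connectivity has the form $(n+1)k-c$ with $k=\conn(Y/E)$ and constant $c=(n+1)\dim X$. Goodwillie's uniqueness statement \cite[Theorem~1.1.16 / Proposition~1.6]{GoodwillieHandbook} then says that if $F\to G$ agrees to order $n$ and $G$ is $n$-excisive, the induced $P_nF\to P_nG\simeq G$ is an equivalence. Hence $P_nF\simeq A_n(X,-)$.

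Before applying this, two checks are needed.
\begin{itemize}
\item The transformations must be compatible with the maps $A_n\to A_{n-1}$ that forget $s_n$. This is clear from the construction.
\item Goodwillie's notion of agreement is stated for maps $Y\to E$ that are $k$-connected for all $k$ above some threshold, with the estimate applied to the whole cube $\mathcal{X}$ appearing in the definition of $P_n$. Our estimate is exactly of this form, valid for every $Y\in\catS_{E//E}$.
\end{itemize}
I expect this matching of the connectivity estimate with the precise hypothesis of Goodwillie's theorem to be the main obstacle. In particular, the constant must be uniform in $n$ in the required sense. If the handbook version needs "$\rho$-analytic" phrasing, I would instead use directly that $P_n$ commutes with the finite limits used in $T_n$. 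Then iterating $T_n$ applied to a map that is $((n+1)k-c)$-connected on the relevant range increases connectivity each time, so $P_nF\to P_nA_n$ is an equivalence.

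For convergence, once $\conn(Y/E)>\dim X$, the connectivity $(n+1)(\conn(Y/E)-\dim X)$ tends to infinity with $n$. Hence $F(Y)\to \lim_n P_nF(Y)$ is an equivalence, since on each homotopy group the tower is eventually constant and equal to $\pi_*F(Y)$, so there are no $\lim^1$ issues.

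For the derivative statement, the layer $D_nF=\fib(P_nF\to P_{n-1}F)$ is identified with $D_n(X,-)$. When $X$ is a closed manifold, \cref{corollary:homogeneity} with $B=X$ gives $D_n(X,Y)$ in the homogeneous form $\colim_{E^n\sslash\Sigma_n}\big((\Sigma^\infty_EY)^{\boxtimes n}\otimes \mathrm{ev}_!\bS^0\otimes p^*\bS^{-TX^n}\otimes p^*\omega_!\bS^0\big)$. By the uniqueness of the coefficient spectrum of an $n$-homogeneous functor \cite[Theorem~1.2.3]{GoodwillieHandbook}, this parameterised spectrum is $\partial_nF$.
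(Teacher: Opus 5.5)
Your proposal is correct and follows the paper's proof essentially step for step. The transformation $F \to A_n(X,-)$ lands in an $n$-polynomial functor (\cref{proposition:second-variable-excision}) and is an order-$n$ agreement by \cref{proposition:order-n-agreement}, so \cite[Proposition~1.6]{Goodwillie3} gives $P_nF \simeq A_n(X,-)$; convergence follows from the connectivity growing with $n$, and the derivative is read off from \cref{corollary:homogeneity}.

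The obstacle you anticipate is not one. Agreement to order $n$ is a condition imposed separately for each fixed $n$, so the constant $c=(n+1)\dim X$ does not need to be uniform in $n$.
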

\begin{proof}
By the universal property of the $n$th polynomial approximation $P_nF$ \cite[Theorem~1.8]{Goodwillie3} and \cref{proposition:second-variable-excision}, the natural map $F \to A_n(X,-)$ factors through $F \to P_nF$. By \cref{proposition:order-n-agreement} the map $F \to A_n(X,-)$ is an order $n$ agreement, hence $P_nF \to A_n(X,-)$ is an equivalence \cite[Proposition~1.6]{Goodwillie3}. That proposition also shows that the connectivity of the map $F(Y) \to A_n(X,Y) \simeq P_nF(Y)$ tends to infinity with $n$ provided that $\conn(Y/E) > \dim(X)$, hence the Goodwillie tower converges on those spaces $Y$. The formula for the derivative when $X$ is a manifold was given in \cref{corollary:homogeneity}.
\end{proof}

\begin{remark}\label{remark:tower-extra-basepoint}
Instead of basing the space $\Gamma(X;Y)$ at the canonical section $X \to E \to Y$, we could have chosen to add a basepoint and study the functor $Y \mapsto \Sigma^\infty_+\Gamma(X;Y)$. This has little effect on the Goodwillie tower. Indeed there is a natural equivalence $\Sigma^\infty_+\Gamma(X;Y) \simeq \bS^0 \oplus \Sigma^\infty\Gamma(X;Y)$, hence only the zeroth derivatives differ by an $\bS^0$ summand. More directly, one may also modify \cref{definition:An} and work with $\widetilde{A}_n^{k,+} := \widetilde{A}_n^k \vee S^k$ (compare \cite[Remark~3.5]{Malkiewich}).
\end{remark}

\begin{remark}\label{remark:tower-subspace}
More generally, our proof of \cref{theorem:goodwillie-approximation} shows that for a finite CW-complex $B$ with a map $f \colon B \to X$, the $n$th polynomial approximation of the functor
\begin{align*}
    \catS_{E//E} &\lra \catSp \\
    Y &\longmapsto \Sigma^\infty\left( \Gamma(B;f^*Y) \times_{\Gamma(B;f^*E)} \Gamma(X;E) \right)
\end{align*}
is $A_n(B,-)$.
\end{remark}

\section{Comparing the moduli space to the section space}\label{section:comparison}

In this section, $X$ is a smooth projective complex variety and $\cL$ is a very ample line bundle on $X$. Recall that we are interested in the moduli of hypersurfaces equipped with a section of a fibration $Y \to X$ defined in \cref{eqn:def-moduli-Y}, or rather the more technically favourable fattened up version
\[
    \cM^\mathrm{tub}(\cL;Y) = \set{(s,g)}{s \in \cM(\cL), \ g \in \Gamma(Y|_{\tub V(s)})},
\]
introduced in \cref{subsection:scanning}.
Our goal in this section is to prove \cref{theorem:main} by showing that the suspension of the map~\eqref{eqn:main-map}
\begin{equation}\label{eqn:main-natural-transformation}
    \Sigma^\infty_+ \cM^\mathrm{tub}(\cL;Y) \lra \Sigma^\infty_+\Gamma\left(X; (N \sqcup S(J^1\cL)) \wedge_{S(J^1\cL)} p^*(S^\cL \wedge_X Y)\right)
\end{equation}
is an equivalence in a range of degrees. We will do this by first computing the Goodwillie towers in the $Y$ variable on both sides, and then showing that their derivatives agree in a range of degrees.

\subsection{Conventions and notations}

Let $N$ be the tubular neighbourhood chosen once and for all in \cref{subsection:scanning}. Recall that for any $s \in \cM(\cL)$, we write
\[
    \tub V(s) = (\{s\} \times X) \cap \Psi^{-1}(N) = \set{x \in X}{\Psi(s,x) \in N}
\]
for the associated closed tubular neighbourhood of $V(s) \subset X$. (We need to work with the \emph{closed} neighbourhood to ensure that $\tub V(s)$ is a finite CW complex.) Let $r \colon \tub V(s) \to V(s)$ be the retraction of the tubular neighbourhood and define
\[
    \Conf_n^{\mathrm{tub}}(V(s)) = \set{(x_1,\dots,x_n) \in \Conf_n(\tub V(s))}{r(x_i) \neq r(x_j), \ i \neq j}
\]
for the subspace of those configurations in the tubular neighbourhood which retract to configurations on the hypersurface. This is a bundle of products of closed discs over $V(s)$, so the radial contraction $\Conf_n^{\mathrm{tub}}(V(s)) \to V(s)$ is a proper homotopy equivalence. We will also need a fattening of the universal hypersurface bundle with $n$ distinct marked points, so we define
\[
    \cZ^\mathrm{tub}_n(\cL) = \set{(s, c) \in \cM(\cL) \times \Conf_n(X)}{c \subset \Conf_n^{\mathrm{tub}}(V(s))} \lra \cM(\cL).
\]
In the course of the proof, we will perform several base changes. To orient ourselves, we give names to maps in the following diagram:
\[
\begin{tikzcd}
         &                                                           &                                      & X^n \times \Gamma(S(J^1\cL)) \arrow[d, "\mathrm{ev}"] \\
         & \cZ^\mathrm{tub}_n(\cL) \arrow[rd, "q"] \arrow[ld, "\pi"'] &                                      & S(J^1\cL)^n \arrow[d, "p"]                            \\
\cM(\cL) &                                                           & \Conf_n(X) \arrow[r, "\omega", hook] & X^n                                                  
\end{tikzcd}
\]
Here $p$, $q$, $\pi$ are the obvious projections, $\omega$ is the inclusion, and $\mathrm{ev}$ is the evaluation map. We do not further decorate our maps; it will be clear from context which integer $n \geq 0$ is meant. Likewise, we will not distinguish between a map and the map it induces on the quotient by the symmetric group action, e.g. we write $p$ for both $S(J^1\cL)^n \to X^n$ and $S(J^1\cL)^n \sslash \Sigma_n \to X^n \sslash \Sigma_n$. Finally, we will also use the same notation when we restrict a map to a subspace, e.g. $p$ will denote both projections $S(J^1\cL)^n \to X^n$ and $S(J^1\cL)^n|_{\Conf_n(X)} \to \Conf_n(X)$ (as well as the map induced on the quotients).

Let us also recall from \cref{subsection:recollections} that we write $(-)^{\boxtimes n}$ for the external tensor power of a parameterised spectrum, and $\Sigma^k_{B,+}(-) = \Sigma^k_B(- \sqcup B)$ for the fibrewise $k$-fold suspension over $B$ with an added basepoint fibrewise. We also recall that we write $\colim_B(-)$ for the colimit of a parameterised spectrum over $B$, i.e. the functor $r_!(-)$ where $r \colon B \to *$ is the unique map. For a vector bundle $V \to B$, we will often write $\bS^{nV} = (\bS^V)^{\boxtimes n}$ for the $n$-fold tensor product seen as a parameterised spectrum over $B^n$ with a $\Sigma_n$ action.

\subsection{Derivatives of the moduli space}

We start by studying the Goodwillie tower of the functor
\[
    \catS_{X//X} \lra \catSp, \quad Y \longmapsto \Sigma^\infty_+\cM^\mathrm{tub}(\cL;Y).
\]
By definition, the projection $\cM^\mathrm{tub}(\cL;Y) \to \cM(\cL)$ is a fibration with fibre $\Gamma(Y|_{\tub V(s)})$. Hence it straightens to a functor $\cM(\cL) \to \catS$ informally given by $s \mapsto \Gamma(Y|_{\tub V(s)})$, and whose colimit is $\cM^\mathrm{tub}(\cL;Y)$. After applying the functor $\Sigma^\infty_+ \colon \catS \to \catSp$ and taking the colimit, we obtain that $\Sigma^\infty_+\cM^\mathrm{tub}(\cL;Y) \simeq \colim_{s \in \cM(\cL)} \Sigma^\infty_+\Gamma(Y|_{\tub V(s)})$. In \cref{section:goodwillie} we have studied the Goodwillie tower of $Y \mapsto \Sigma^\infty_+\Gamma(Y|_{\tub V(s)})$. It remains to explain how to adapt the formulas given there to the parameterised setting over $\cM(\cL)$.

Let $Y \to X$ be an ex-fibration. We write (cf. \cref{remark:tower-extra-basepoint})
\[
    A^{k,+}_n(-,Y) = \widetilde{A}^k_n(-,Y) \vee S^k
\]
where $\widetilde{A}^k_n(-,Y)$ is defined as in \cref{definition:An} (taking $E=X$ there). The construction $A^{k,+}_n(-,Y)$ is functorial on the category of finite CW complexes with a map to $X$. In particular, we may apply it fibrewise on the fattened universal hypersurface bundle $\cZ^\mathrm{tub}_1(\cL) \to \cM(\cL)$, using that each fibre $\tub V(s) \hookrightarrow X$ comes with a canonical embedding inside $X$. This yields parameterised spaces over $\cM(\cL)$ for each $k \geq 0$
\[
    A^{k,+}_n(\cZ^\mathrm{tub}_1(\cL), Y) = \set{(s, a)}{s \in \cM(\cL), \ a \in A^{k,+}_n(\tub V(s),Y)},
\]
thus defining a parameterised spectrum $A^+_n(\cZ^\mathrm{tub}_1(\cL), Y)$ over $\cM(\cL)$. The formula given in \cref{eqn:adjointmap} (extended to the added basepoint) yields as in \cref{eqn:goodwillie-tower-map} a map of sequential spectra
\begin{equation}\label{eqn:Vs-tower-map}
    \Sigma^k_+\Gamma(Y|_{\tub V(s)}) \lra A^{k,+}_n(\tub V(s), Y), \quad k \geq 0.
\end{equation}
By naturality, these maps assemble into a map of parameterised sequential spectra
\begin{equation}\label{eqn:moduli-tower-map}
    \Sigma^k_{\cM(\cL),+} \cM^\mathrm{tub}(\cL;Y) \lra A^{k,+}_n(\cZ^\mathrm{tub}_1(\cL), Y), \quad k \geq 0.
\end{equation}
The following result is then a direct consequence of our \cref{theorem:goodwillie-approximation}:
\begin{proposition}\label{proposition:parameterised-tower}
Let $n \geq 0$ be an integer. The natural transformation of functors
\[
    \Sigma^\infty_+ \cM^\mathrm{tub}(\cL;-) \lra \colim_{\cM(\cL)}A^+_n(\cZ^\mathrm{tub}_1(\cL),-),
\]
obtained as the colimit over $\cM(\cL)$ of the map~\eqref{eqn:moduli-tower-map}, is the $n$th polynomial approximation in the sense of Goodwillie.
\end{proposition}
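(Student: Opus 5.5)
The strategy is to reduce to the fibrewise statement of \cref{theorem:goodwillie-approximation}, together with \cref{remark:tower-extra-basepoint,remark:tower-subspace} in the case $E = X$, and then pass to the colimit over $\cM(\cL)$. Two facts drive this. First, $\colim_{\cM(\cL)}$ is a left adjoint, so it commutes with finite limits of spectra and with filtered colimits. Second, $n$-excisive functors into spectra are closed under colimits, since strongly cocartesian cubes are sent to cartesian cubes and cartesian cubes of spectra are the same as cocartesian ones. So it suffices to exhibit the map in the statement as the colimit over $s \in \cM(\cL)$ of the maps~\eqref{eqn:Vs-tower-map}, each of which is a $P_n$-approximation.

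Concretely, I would proceed in four steps.

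\begin{enumerate}
\item \textbf{Identify the source.} Straighten the fibration $\cM^\mathrm{tub}(\cL;Y) \to \cM(\cL)$ to obtain $\Sigma^\infty_+\cM^\mathrm{tub}(\cL;Y) \simeq \colim_{s} \Sigma^\infty_+\Gamma(Y|_{\tub V(s)})$, as explained just before the statement.

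\item \textbf{Check the target is well behaved.} Show that $A^{k,+}_n(\cZ^\mathrm{tub}_1(\cL),Y) \to \cM(\cL)$ is a quasifibration, or at least a levelwise parameterised spectrum, whose fibre over $s$ is $A^{k,+}_n(\tub V(s),Y)$. I would use the local triviality of $\cZ^\mathrm{tub}_1(\cL)$ over small opens $U$: choosing $\theta \colon U \times \tub V(s) \cong \cZ^\mathrm{tub}_1(\cL)|_U$ over $X$ up to homotopy, \cref{lemma:invariance-in-B} shows that restriction along $\theta(u,-)$ gives weak equivalences $A^{k,+}_n(\tub V(u),Y) \simeq A^{k,+}_n(\tub V(s),Y)$, continuous in $u$. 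This lets us regard $A^+_n(\cZ^\mathrm{tub}_1(\cL),Y)$ as the straightening of $s \mapsto A^+_n(\tub V(s),Y)$. The map~\eqref{eqn:moduli-tower-map} is then the straightened family of the maps~\eqref{eqn:Vs-tower-map}.

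\item \textbf{Polynomiality of the target.} By \cref{proposition:second-variable-excision}, each fibre $A^+_n(\tub V(s),-)$ is $n$-excisive; this uses that $\tub V(s)$ is a finite CW complex, which is why the closed neighbourhood was chosen. Naturality in $Y$ makes $Y \mapsto A^+_n(\cZ^\mathrm{tub}_1(\cL),Y)$ a functor $\catS_{X//X} \to \catSp^{\cM(\cL)}$ that is fibrewise $n$-excisive. Applying $\colim_{\cM(\cL)}$ preserves $n$-excisiveness by the fact recalled above, so the target of the statement is $n$-excisive.

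\item \textbf{Universality.} For each $s$, \cref{theorem:goodwillie-approximation}, with the extra basepoint handled as in \cref{remark:tower-extra-basepoint}, shows that $\Sigma^\infty_+\Gamma(Y|_{\tub V(s)}) \to A^+_n(\tub V(s),Y)$ exhibits $P_n$. Goodwillie's $P_n$ is built as a sequential colimit of finite limits (the $T_n$ construction), so it commutes with colimits of functors into spectra. Hence
\[
    P_n\big(\colim_s \Sigma^\infty_+\Gamma(Y|_{\tub V(s)})\big) \simeq \colim_s P_n \Sigma^\infty_+\Gamma(Y|_{\tub V(s)}) \simeq \colim_s A^+_n(\tub V(s),Y),
\]
which is exactly the claim.
\end{enumerate}

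The main obstacle is the point-set part of Step 2: checking that the fibrewise construction $A^{k,+}_n(\cZ^\mathrm{tub}_1(\cL),Y)$, with its topology inherited from the trivialisations, really presents the $\infty$-categorical straightening, and that the map~\eqref{eqn:moduli-tower-map} is continuous and natural. My first attempt would be to cover $\cM(\cL)$ by trivialising opens, on each of which everything is a product up to the homotopy invariance lemmas. I would then glue using the fact that both sides are sheaves on $\cM(\cL)$, in the manner of the sheaf-theoretic conventions set out in \cref{subsection:recollections}. Once this identification is in place, the remaining steps are formal.
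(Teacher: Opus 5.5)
Your proposal is correct and follows the paper's own argument: $P_n$ of spectrum-valued functors commutes with colimits, so the statement reduces to checking, fibre by fibre over $s \in \cM(\cL)$, that the maps~\eqref{eqn:Vs-tower-map} are $n$th polynomial approximations, and this is \cref{theorem:goodwillie-approximation} combined with \cref{remark:tower-extra-basepoint,remark:tower-subspace} for $B = \tub V(s)$ and $E = X$. Your Step~3 is redundant given Step~4, since $\colim_s P_n(\cdot)$ is $P_n$ of the colimit and hence automatically $n$-excisive; the point-set assembly issues you raise in Step~2 are ones the paper also passes over, simply asserting that the maps assemble ``by naturality''.
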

\begin{proof}
For spectrum valued functors, the universal polynomial approximation $P_n(-)$ commutes with all colimits \cite[Proposition~1.7.(5)]{Goodwillie3}. Hence it suffices to check that the map~\eqref{eqn:moduli-tower-map} of parameterised spectra
\[
    \Sigma^\infty_{\cM(\cL),+} \cM^\mathrm{tub}(\cL;-) \lra A^+_n(\cZ^\mathrm{tub}_1, -)
\]
is the $n$th polynomial approximation on each fibre. Between the fibres over $s \in \cM(\cL)$ it is the map~\eqref{eqn:Vs-tower-map}
\[
    \Sigma^\infty_+\Gamma(\tub V(s);-) \lra A^+_n(\tub V(s), -)
\]
and the result follows exactly from \cref{theorem:goodwillie-approximation} and \cref{remark:tower-extra-basepoint,remark:tower-subspace}.
\end{proof}

Recall that $\pi \colon \cZ^\mathrm{tub}_n \to \cM(\cL)$ and $q \colon \cZ^\mathrm{tub}_n(\cL) \to \Conf_n(X)$ denote the projections. We write $\pi_!^\sh(-)$ for the exceptional pushforward functor on spectral sheaves \cite[\S~6]{Volpe}. For any $k \geq 0$, we have a parameterised space
\begin{equation}\label{eqn:fibrewise-compact-bundle}
    \Gamma_c(\cZ^\mathrm{tub}_n(\cL),q^*(\Sigma^k_X Y)^{\ovwedge n}) \lra \cM(\cL)
\end{equation}
obtained by fibrewise taking compactly supported sections of $q^*(\Sigma^k_X Y)^{\ovwedge n}$. By taking sections, we obtain a locally constant sheaf of spectra
\begin{equation*}
    \cM(\cL) \overset{\text{open}}{\supset} U \longmapsto \Gamma(U;\Gamma_c(\cZ^\mathrm{tub}_n(\cL),q^*(\Sigma^\infty_X Y)^{\boxtimes n})|_U).
\end{equation*}
We recognise it in the following:
\begin{lemma}\label{lemma:exceptional-pushforward-formula}
There is a natural equivalence of sheaves
\[
    \pi_!^\sh\left( q^*(\Sigma^\infty_X Y)^{\boxtimes n}\right) \overset{\simeq}{\lra} \Gamma(-;\Gamma_c(\cZ^\mathrm{tub}_n(\cL),q^*(\Sigma^\infty_X Y)^{\boxtimes n})|_{(-)}).
\]
\end{lemma}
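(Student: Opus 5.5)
The plan is to produce a natural comparison map of sheaves of spectra on $\cM(\cL)$, and then check that it is an equivalence on stalks. Both sides can be computed locally over a trivialising open cover, which reduces the problem to a product situation.

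\emph{Step 1: the comparison map.} Recall from \cite[\S~6]{Volpe} that $\pi^\sh_!$ is defined by factoring $\pi$ as an open embedding $j$ into a relative compactification $\overline\pi$ and setting $\pi^\sh_! = \overline{\pi}_* j_!$. In our situation a relative compactification is essentially given. The fattened configuration space $\cZ^\mathrm{tub}_n(\cL)$ sits as an open subspace of
\[
\overline{\cZ}_n = \set{(s,c)}{s \in \cM(\cL),\ c \in (\tub V(s))^n},
\]
and $\overline\pi \colon \overline{\cZ}_n \to \cM(\cL)$ is proper, since it is a bundle with compact fibres $(\tub V(s))^n$. The complement is the closed locus where two points retract to the same point of $V(s)$.

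For $U$ open, a section of $\overline\pi_* j_! (q^*(\Sigma^\infty_X Y)^{\boxtimes n})$ over $U$ is a section over $\overline\pi^{-1}(U)$ of the extension by zero. Via the localisation sequence \cite[Corollary~4.5]{Volpe}, this is the fibre of restriction to the closed complement. This fibre is naturally identified with the point-set object: sections over $U$ of the parameterised spectrum of fibrewise compactly supported sections. To make the levels match, we use the sheaves of sequential spectra from \cref{subsection:recollections}, together with the fact that restriction along the cofibration onto the fat-diagonal-type locus is a fibration, as in the proof of \cref{lemma:layer-computation}. This gives the map in the statement, natural in $U$ and $Y$.

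\emph{Step 2: the map is an equivalence.} Both sides are sheaves, so it suffices to work locally. Over a small open $U$ on which $\cZ^\mathrm{tub}_1(\cL)$ is trivial, $\cZ^\mathrm{tub}_n(\cL)|_U \cong U \times \Conf_n^{\mathrm{tub}}(V(s))$. Moreover $q^*(\Sigma^\infty_X Y)^{\boxtimes n}$ is pulled back from a parameterised spectrum over the fibre up to homotopy, because $Y$ is a fibration and $U$ may be taken contractible.

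Proper base change for $\pi^\sh_!$ \cite{Volpe} identifies the stalk of the left-hand side at $s$ with $\Gamma_c(\Conf_n^{\mathrm{tub}}(V(s)); q^*(\Sigma^\infty_X Y)^{\boxtimes n})$, computed as a compactly supported sheaf cohomology. The stalk of the right-hand side is the same spectrum of compactly supported sections. For this we use that the parameterised space \eqref{eqn:fibrewise-compact-bundle} is locally trivial, and hence gives a locally constant sheaf, by local triviality of the tubular data.

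The comparison of the point-set compactly supported section spectrum with the sheaf-theoretic one on a single fibre is exactly the identification already used in \cref{lemma:layer-computation}, via \cite[Corollary~4.5]{Volpe}.

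\emph{Main obstacle.} The delicate point is Step 1: making the point-set fibrewise compactly supported sections genuinely compute $\overline\pi_* j_!$ as a sheaf, including local constancy and continuity in $s$. I would handle this by working with closed tubes, so that the compactification is a fibre bundle of finite CW complexes with a fibrewise subcomplex as complement. This makes restriction a levelwise fibration of sheaves of sequential spectra, and the strict fibre then computes the homotopy fibre.
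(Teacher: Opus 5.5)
Your argument is correct, but you construct the comparison map differently from the paper. The paper never chooses a compactification. It uses that $\pi^\sh_!F$ is characterised by $\Gamma_c(U;\pi^\sh_!F)\simeq\Gamma_c(\pi^{-1}(U);F)$ \cite[Definition~6.1]{Volpe}. A map out of $\pi^\sh_!F$ is then obtained by currying: a compactly supported section on $\pi^{-1}(U)$ becomes a compactly supported section over $U$ of the bundle of fibrewise compactly supported sections. The paper then only checks stalks, where by proper base change the map is the identity of $\Gamma_c(\Conf_n^{\mathrm{tub}}(V(s));-)$.

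You instead write $\pi^\sh_!\simeq\overline{\pi}_*j_!$ through the explicit fibrewise compactification $\overline{\cZ}_n$. This follows from functoriality of $(-)_!$ and $f_!\simeq f_*$ for proper $f$; it is not literally Volpe's definition. You then use the localisation sequence to model sections over $U$ as sections on $\overline{\pi}^{-1}(U)$ that vanish on $r^{-1}(\Delta)$, which is a parameterised version of \cref{lemma:layer-computation}.

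What your route buys is an explicit point-set model of $\pi^\sh_!$ on every open, not just on stalks. Your Step~2 is then really only a consistency check, and the concreteness helps with later point-set comparisons such as \eqref{eqn:sheaf-to-conf}.

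The price is the list of verifications you partly flag:
\begin{enumerate}
\item $\overline{\cZ}_n$ is closed in $\cM(\cL)\times X^n$, so $\overline{\pi}$ is proper.
\item The retraction $r_s$ varies continuously in $s$, so $\cZ^{\mathrm{tub}}_n(\cL)$ is open in $\overline{\cZ}_n$.
\item The inclusion of $r^{-1}(\Delta)$ is, locally over $\cM(\cL)$, a cofibration, so the strict fibre of restriction is the homotopy fibre.
\item Point-set sections over the closed locus compute $i^*$ of the locally constant sheaf.
\item Sections vanishing on $r^{-1}(\Delta)$ are equivalent to genuinely compactly supported ones.
\end{enumerate}
The paper's currying on compactly supported sections avoids all of these and needs only the stalkwise check, which suffices on the manifold $\cM(\cL)$.
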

\begin{proof}
The sheaf $\pi_!^\sh\left( q^*(\Sigma^\infty_X Y)^{\boxtimes n}\right)$ is uniquely determined \cite[Definition~6.1 and the paragraph below]{Volpe} by the formula
\[
    \Gamma_c\left(U;\pi_!^\sh\left( q^*(\Sigma^\infty_X Y)^{\boxtimes n}\right)\right) = \Gamma_c\left(\pi^{-1}(U); q^*(\Sigma^\infty_X Y)^{\boxtimes n}\right).
\]
There is an obvious map obtained by currying
\[
    \Gamma_c(\pi^{-1}(U); q^*(\Sigma^\infty_X Y)^{\boxtimes n}) \lra \Gamma_c(U;\Gamma_c(\cZ^\mathrm{tub}_n(\cL),q^*(\Sigma^\infty_X Y)^{\boxtimes n})|_U),
\]
which, by proper base change \cite[Proposition~6.9]{Volpe}, restricts on the stalk at $s \in \cM(\cL)$ to the identity
\[
    \Gamma_c(\Conf_n^{\mathrm{tub}}(V(s)); q^*(\Sigma^\infty_X Y)^{\boxtimes n}) \overset{\simeq}{\lra} \Gamma_c(\Conf_n^{\mathrm{tub}}(V(s)); q^*(\Sigma^\infty_X Y)^{\boxtimes n}). \qedhere
\]
\end{proof}

Our work on Goodwillie calculus uses configuration spaces rather than fattened up versions, so we shall need to compare them.
\begin{lemma}\label{lemma:configuration-tube-replacement}
Let $\cF$ be a locally constant sheaf of spectra on $(\tub V(s))^n$. Extension by zero along the open inclusion $j \colon \Conf_n^{\mathrm{tub}}(V(s))\hookrightarrow \Conf_n(\tub V(s))$ induces
an equivalence
\[
    \Gamma_c(\Conf_n^{\mathrm{tub}}(V(s));\cF) \overset{\simeq}{\lra} \Gamma_c(\Conf_n(\tub V(s));\cF).
\]
\end{lemma}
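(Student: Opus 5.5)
The plan is to use the localisation (recollement) fibre sequence for sheaves of spectra. Write $W = \Conf_n(\tub V(s))$, $U = \Conf_n^{\mathrm{tub}}(V(s)) \subset W$ for the open subspace, and $Z = W - U$ for its closed complement. $Z$ consists of configurations of distinct points $(x_1,\dotsc,x_n)$ in $\tub V(s)$ such that $r(x_i) = r(x_j)$ for some $i \neq j$. By \cite[Corollary~4.5]{Volpe} and its compactly supported analogue, there is a cofibre sequence
\[
    \Gamma_c(U;\cF) \overset{j_!}{\lra} \Gamma_c(W;\cF) \lra \Gamma_c(Z;\cF|_Z).
\]
It therefore suffices to prove that $\Gamma_c(Z;\cF|_Z) \simeq 0$.

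To do this I would stratify $Z$ according to the partition of $\{1,\dotsc,n\}$ induced by the equivalence relation $i \sim j \iff r(x_i) = r(x_j)$.
\begin{itemize}
\item Order the partitions by refinement and filter $Z$ by closed unions of strata. Since stratum inclusions are locally closed, repeated use of the same localisation sequence reduces the claim to the vanishing of $\Gamma_c(Z_P;\cF)$ for each stratum $Z_P$. Here $P$ is a partition with at least one block of size $\geq 2$.
\item Each stratum $Z_P$ maps via $r$ to the configuration space $\Conf_{|P|}(V(s))$. Over a configuration $(v_1,\dotsc,v_{|P|})$ the fibre is $\prod_{b \in P} \Conf_{|b|}(D_{v_b})$, where $D_v$ is the closed normal disc of $\tub V(s)$ at $v$.
\item Applying $r_!^\sh$ and proper base change \cite[Proposition~6.9]{Volpe}, the stalks of $r_!^\sh\cF$ are the spectra $\Gamma_c$ of these fibres. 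So it suffices to show fibrewise vanishing.
\end{itemize}

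Since $\cF$ is locally constant and the fibre lies in a product of discs, $\cF$ is constant there. By a Künneth-type splitting it suffices to show that $\Gamma_c(\Conf_m(D);\underline{S}) \simeq 0$ for $D$ a closed disc, $m \geq 2$ and $S$ a constant spectrum. Using the localisation sequence once more, this spectrum is the fibre of the restriction
\[
    \Gamma(D^m;\underline{S}) \lra \Gamma(\Delta;\underline{S}),
\]
where $\Delta \subset D^m$ is the fat diagonal. This uses that $D^m$ is compact and $\Delta$ is closed. Both $D^m$ and $\Delta$ are contractible: $\Delta$ is a union of linear subspaces intersected with a convex set, hence star-shaped about the centre point $(0,\dotsc,0)$. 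Note that $\Delta \neq \varnothing$ precisely because $m \geq 2$. Hence both section spectra are $S$ and the restriction is an equivalence, so the fibre vanishes.

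The main obstacle is making the stratification step rigorous. Concretely, one must check two things. First, the strata are locally closed and glue via the recollement in the required order. Second, proper base change applies to $r|_{Z_P}$. For the latter, note that $r|_{Z_P}$ is not proper, but it is a fibre bundle whose fibres are products of configuration spaces in closed discs. I would first try to avoid this by working with $r^n \colon W \to V(s)^n$ directly and computing stalks of $r^n_!$ at points of the fat diagonal of $V(s)^n$; there the fibre is again a product of $\Conf_m(D)$'s with some $m \geq 2$, so the same contractibility argument applies. Failing that, I would do the filtration by hand.
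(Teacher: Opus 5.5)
Your argument is correct, but it takes a longer route than the paper. Both proofs start from the same localisation sequence and reduce to $\Gamma_c(Z;\cF|_Z)\simeq 0$.

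The paper then argues globally. Write $r\colon(\tub V(s))^n\to V(s)^n$ for the radial retraction, and $\Delta\subset V(s)^n$, $\Delta_{\mathrm{tub}}\subset(\tub V(s))^n$ for the fat diagonals. Then $Z = r^{-1}(\Delta) - \Delta_{\mathrm{tub}}$, with $r^{-1}(\Delta)$ compact and $\Delta_{\mathrm{tub}}$ closed in it. Hence $\Gamma_c(Z;\cF)$ is the fibre of the restriction $\Gamma(r^{-1}(\Delta);\cF)\to\Gamma(\Delta_{\mathrm{tub}};\cF)$. This restriction is an equivalence, because the radial contraction deformation retracts both spaces onto $\Delta$ and sections of locally constant sheaves are homotopy invariant.

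Your proof is the same ``fibre of a restriction between two contractible compacta'' computation, done one normal-disc fibre at a time. That costs you base change, constancy of $\cF$ on fibres, and detecting zero on stalks. You use the last point implicitly; it holds because sheaves of spectra on a finite-dimensional base are hypercomplete. In exchange, your contractibility inputs are completely explicit star-shaped sets, and you never need homotopy invariance on the singular compacta $r^{-1}(\Delta)$ and $\Delta_{\mathrm{tub}}$.

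Two simplifications are available.
\begin{itemize}
\item The stratification is unnecessary. Base change for $(-)_!$ holds for arbitrary maps of locally compact Hausdorff spaces, with no properness needed; the paper itself applies it to the non-proper $\pi$ in \cref{lemma:exceptional-pushforward-formula}. So your fallback of pushing forward along $r|_Z\colon Z\to\Delta$ works directly, with fibre $\prod_b\Conf_{|b|}(D_{v_b})$ over a point of $\Delta$.
\item The K\"unneth step can be dropped. The complement of this fibre in $\prod_i D_{v_i}$ is itself nonempty and star-shaped about the centre point as soon as some block has size $\geq 2$.
\end{itemize}
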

\begin{proof}
This follows from the localisation sequence of the inclusion
\[
    \Conf_n^{\mathrm{tub}}(V(s)) \hookrightarrow \Conf_n(\tub V(s)).
\]
Indeed, let $r \colon (\tub V(s))^n \to V(s)^n$ be the radial retraction, and $\Delta \subset V(s)^n$ and $\Delta_\mathrm{tub} \subset (\tub V(s))^n$ be the fat diagonals. Then $\Delta_\mathrm{tub} \hookrightarrow r^{-1}(\Delta)$ is a homotopy equivalence as both spaces radially retract onto $\Delta$. Then, by the localisation sequence, the cofibre of the map of the lemma is
\[
    \Gamma_c(r^{-1}(\Delta) - \Delta_\mathrm{tub}; \cF) \simeq \fib\left(\Gamma(r^{-1}(\Delta);\cF) \overset{\simeq}{\to} \Gamma(\Delta_\mathrm{tub};\cF)\right) \simeq 0,
\]
by homotopy invariance of global sections of locally constant sheaves (i.e. parameterised spectra).
\end{proof}

The sheaf $\pi_!^\sh\left( q^*(\Sigma^\infty_X Y)^{\boxtimes n}\right)$ is locally constant, and thus corresponds to a parameterised spectrum on $\cM(\cL)$. We can in particular make sense of its colimit over $\cM(\cL)$, and use it to compute the differential of our functor:
\begin{corollary}\label{corollary:differential-is-colimit}
Let $n \geq 1$ be an integer. The $n$th differential of the functor $\Sigma^\infty_+\cM^\mathrm{tub}(\cL;-)$ is
\[
    D_n(\Sigma^\infty_+\cM^\mathrm{tub}(\cL;-))(Y) \simeq \colim_{\cM(\cL)} \ \pi_!^\sh\left( q^*(\Sigma^\infty_X Y)^{\boxtimes n}\right)_{h\Sigma_n},
\]
where the symmetric group $\Sigma_n$ acts by permuting the factors of $(\Sigma^\infty_X Y)^{\boxtimes n}$ and the ordering of the configurations.
\end{corollary}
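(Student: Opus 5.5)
The plan is to compute the layer fibrewise over $\cM(\cL)$ and then reassemble. By \cref{proposition:parameterised-tower}, $P_n$ of $\Sigma^\infty_+\cM^\mathrm{tub}(\cL;-)$ is $\colim_{\cM(\cL)} A^+_n(\cZ^\mathrm{tub}_1(\cL),-)$. Colimits over $\cM(\cL)$ commute with fibres of spectra, and the added $S^k$ summands cancel for $n\geq 1$. Hence
\[
D_n(\Sigma^\infty_+\cM^\mathrm{tub}(\cL;-))(Y) \simeq \colim_{\cM(\cL)} D_n(\cZ^\mathrm{tub}_1(\cL),Y),
\]
where $D_n(\cZ^\mathrm{tub}_1(\cL),Y)$ is the parameterised spectrum over $\cM(\cL)$ whose fibre at $s$ is $D_n(\tub V(s),Y)=\fib(A_n(\tub V(s),Y)\to A_{n-1}(\tub V(s),Y))$.

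Next I apply \cref{lemma:layer-computation} with $E=X$ and $B=\tub V(s)$, which is a finite CW-complex. Since $E=X$, we have $\Gamma(E)=*$, and the factor $(B^n\times\Gamma(E))\sqcup f^*E^n$ collapses, so $\cE^\infty_n(B,Y)\simeq f^*(\Sigma^\infty_X Y)^{\boxtimes n}$ restricted to $B^n$. The lemma then gives
\[
D_n(\tub V(s),Y)\simeq \Gamma_c\big(\Conf_n(\tub V(s)); (\Sigma^\infty_X Y)^{\boxtimes n}\big)_{h\Sigma_n}.
\]
By \cref{lemma:configuration-tube-replacement}, applied $\Sigma_n$-equivariantly, this is equivalent to $\Gamma_c(\Conf_n^\mathrm{tub}(V(s)); q^*(\Sigma^\infty_X Y)^{\boxtimes n})_{h\Sigma_n}$. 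That spectrum is the fibre at $s$ of the bundle \eqref{eqn:fibrewise-compact-bundle} with homotopy orbits taken. Finally, \cref{lemma:exceptional-pushforward-formula} identifies the corresponding locally constant sheaf with $\pi_!^\sh(q^*(\Sigma^\infty_X Y)^{\boxtimes n})$. Homotopy orbits commute with $\colim_{\cM(\cL)}$, so this gives the claimed formula.

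The main obstacle is naturality in $s$. The equivalences of \cref{lemma:layer-computation} and \cref{lemma:configuration-tube-replacement} must assemble into an equivalence of parameterised spectra over $\cM(\cL)$, not just fibrewise equivalences. I would handle this by running the proof of \cref{lemma:layer-computation} in families. The strict pullback square with the restriction to the fat diagonal makes sense over $\cM(\cL)$ using $\cZ^\mathrm{tub}_1(\cL)$, and the restriction map is a fibration in families by local triviality of $\cZ^\mathrm{tub}_1(\cL)\to\cM(\cL)$. The strict fibre is then the fibrewise compactly supported section space $\Gamma_c(\cZ^\mathrm{tub}_n(\cL)\text{-type},-)$, which gives a map of parameterised spectra that is a fibrewise equivalence and hence an equivalence. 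I would also check that the Adams isomorphism and the extension-by-zero map are natural, so they too glue over $\cM(\cL)$.
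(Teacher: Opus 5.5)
Your proposal is correct and follows essentially the same route as the paper. Like the paper, you commute $D_n$ with $\colim_{\cM(\cL)}$ via \cref{proposition:parameterised-tower}, identify the fibre of $A^+_n \to A^+_{n-1}$ by \cref{lemma:layer-computation} (with $E=X$, so that $\cE^\infty_n$ reduces to $(\Sigma^\infty_X Y)^{\boxtimes n}$) and \cref{lemma:configuration-tube-replacement}, and conclude with \cref{lemma:exceptional-pushforward-formula}; the paper handles the naturality in $s$ that you flag by treating these parameterised spectra as locally constant sheaves $U \mapsto \Gamma(U;-|_U)$ on $\cM(\cL)$, which amounts to the same gluing argument you sketch.
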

\begin{proof}
For spectrum valued functors, the universal polynomial approximation $P_n(-)$ commutes with all colimits \cite[Proposition~1.7.(5)]{Goodwillie3}, hence so does $D_n(-) = \fib(P_n(-) \to P_{n-1}(-))$. Thus, by \cref{proposition:parameterised-tower} we have that
\[
    D_n(\Sigma^\infty_+\cM^\mathrm{tub}(\cL;-))(Y) \simeq \colim_{\cM(\cL)} \fib\left(A^+_n(\cZ^\mathrm{tub}_1(\cL),Y) \to A^+_{n-1}(\cZ^\mathrm{tub}_1(\cL),Y)\right).
\]
Seen as a locally constant sheaf, the parameterised spectrum $A^+_n(\cZ^\mathrm{tub}_1(\cL),Y)$ is the sheaf
\[
    \cM(\cL) \overset{\text{open}}{\supset} U \longmapsto \Gamma(U;A^+_n(\cZ^\mathrm{tub}_1(\cL),Y)|_U).
\]
Hence, using \cref{lemma:layer-computation} and the equivalence of \cref{lemma:configuration-tube-replacement}, we identify
\[
    \fib(A^+_n(\cZ^\mathrm{tub}_1(\cL),Y) \to A^+_{n-1}(\cZ^\mathrm{tub}_1(\cL),Y))
\]
with the locally constant sheaf
\[
    \cM(\cL) \overset{\text{open}}{\supset} U \longmapsto \Gamma(U;\Gamma_c(\cZ^\mathrm{tub}_n(\cL),q^*(\Sigma^\infty_X Y)^{\boxtimes n})_{h\Sigma_n}|_U).
\]
The result then follows by \cref{lemma:exceptional-pushforward-formula}.
\end{proof}

\subsection{Derivatives of the section space}

We now study the functor
\begin{equation*}
    \catS_{X//X} \lra \catSp, \quad Y \longmapsto \Sigma^\infty_+\Gamma\left(X; (N \sqcup S(J^1\cL)) \wedge_{S(J^1\cL)} p^*(S^\cL \wedge_X Y)\right).
\end{equation*}
We may factorise it as the composition of
\begin{equation}\label{eqn:precomposition}
    \gamma \colon \catS_{X//X} \lra \catS_{S(J^1\cL)//S(J^1\cL)}, \quad Y \longmapsto (N \sqcup S(J^1\cL)) \wedge_{S(J^1\cL)} p^*(S^\cL \wedge_X Y)
\end{equation}
and $\Sigma^\infty_+\Gamma(X;-) \colon \catS_{S(J^1\cL)//S(J^1\cL)} \to \catSp$. Notice that we have already determined the Goodwillie tower of the second functor in \cref{theorem:goodwillie-approximation}. And in fact, precomposition with the first functor only mildly affects the tower:
\begin{lemma}\label{lemma:pulled-back-tower}
Let $\gamma$ be the functor of \cref{eqn:precomposition}. For any finitary functor $G \colon \catS_{S(J^1\cL)//S(J^1\cL)} \to \catSp$, let $\gamma^*G$ be its composition with $\gamma$. If $G$ is $n$-polynomial, resp. $n$-homogeneous, then so is $\gamma^*G$. Furthermore, if $G$ is stably $n$-excisive \cite[Definition~4.1]{Goodwillie2} then $\gamma^*P_nG = P_n(\gamma^*G)$ and the $n$th Goodwillie derivative of $\gamma^*G$ is given by
\[
    \partial_n(\gamma^*G) = p_!\left(\partial_nG \otimes (\Sigma^\infty_{S(J^1\cL),+}N)^{\boxtimes n}\right) \otimes \bS^{n\cL} \in \catSp^{X^n\sslash \Sigma_n}.
\]
\end{lemma}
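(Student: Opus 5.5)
The plan is to show first that $\gamma$ is a ``linear'' functor in a strong sense, then formally transport polynomiality, homogeneity and the Taylor tower along it. Write $\Sigma^\infty_{S(J^1\cL)}$-wise: the functor $\gamma$ sends $Y \in \catS_{X//X}$ to $(N\sqcup S(J^1\cL))\wedge_{S(J^1\cL)} p^*(S^\cL\wedge_X Y)$. Fibrewise over a point $v \in S(J^1\cL)$ lying over $x \in X$, this is $N_v^+ \wedge S^{\cL_x}\wedge Y_x$, where $N_v^+$ is $S^0$ if $v\in N$ and a point otherwise. So $\gamma$ is the composite of $p^* \colon \catS_{X//X}\to\catS_{S(J^1\cL)//S(J^1\cL)}$ with fibrewise smashing with the fixed retractive space $(N\sqcup S(J^1\cL))\wedge p^*S^\cL$. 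Both operations preserve all (homotopy) pushouts and filtered colimits of retractive spaces: $p^*$ has a right adjoint, and fibrewise smash with a fixed object is a left adjoint in the closed symmetric monoidal structure on $\catS_{S(J^1\cL)//S(J^1\cL)}$. Hence $\gamma$ preserves strongly cocartesian cubes, so precomposition with $\gamma$ preserves $n$-excisiveness. It also preserves being reduced, since $\gamma$ sends the zero object $X$ to the zero object. It preserves finitarity as well. Together these give that $\gamma^*$ preserves $n$-polynomial and $n$-homogeneous functors.

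Next I would prove $\gamma^*P_nG\simeq P_n(\gamma^*G)$. The map $\gamma^*G\to\gamma^*P_nG$ has $n$-excisive target by the first step. It therefore suffices to show it is an agreement of order $n$, and then invoke \cite[Proposition~1.6]{Goodwillie3} as in the proof of \cref{theorem:goodwillie-approximation}. Stable $n$-excisiveness of $G$ says that $G\to P_nG$ is $(-c+(n+1)k)$-connected on objects that are $k$-connected over the base, for $k$ large enough. Since $\gamma(Y)$ is at least $\conn(Y/X)$-connected over $S(J^1\cL)$ (smashing with a fibrewise sphere only raises connectivity), we get $(c,\kappa)$-estimates for $\gamma^*G\to\gamma^*P_nG$. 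These are exactly the hypotheses of Goodwillie's uniqueness statement. This is the step I expect to be the main obstacle: one must check that connectivity of fibres is indeed controlled uniformly, including over the part of $S(J^1\cL)$ outside $N$ where the fibre is trivial, which only helps.

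Finally I would compute the derivative via layers. Since $D_n(\gamma^*G)=\gamma^*D_nG$ by the previous step, I would plug into the homogeneous formula
\[
D_nG(Z)\simeq\colim_{S(J^1\cL)^n\sslash\Sigma_n}\big(\partial_nG\otimes(\Sigma^\infty_{S(J^1\cL)}Z)^{\boxtimes n}\big)
\]
with $Z=\gamma(Y)$. There $\Sigma^\infty_{S(J^1\cL)}\gamma(Y)\simeq\Sigma^\infty_{S(J^1\cL),+}N\otimes p^*\bS^\cL\otimes p^*\Sigma^\infty_XY$. Taking the $n$-th external power and writing the colimit as $\colim_{X^n\sslash\Sigma_n}\circ p_!$, the projection formula for $p$ pulls $p^*(\bS^{\cL})^{\boxtimes n}=p^*\bS^{n\cL}$ and $p^*(\Sigma^\infty_XY)^{\boxtimes n}$ out of $p_!$. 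This gives
\[
\colim_{X^n\sslash\Sigma_n}\Big(p_!\big(\partial_nG\otimes(\Sigma^\infty_{S(J^1\cL),+}N)^{\boxtimes n}\big)\otimes\bS^{n\cL}\otimes(\Sigma^\infty_XY)^{\boxtimes n}\Big).
\]
The symmetric group actions are compatible throughout. By uniqueness of the derivative of an $n$-homogeneous functor \cite[Theorem~1.2.3]{GoodwillieHandbook}, the claimed formula for $\partial_n(\gamma^*G)$ follows.
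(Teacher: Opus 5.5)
Your Steps 2 and 3 follow the paper's argument. You get the order-$n$ agreement of $\gamma^*G\to\gamma^*P_nG$ from \cite[Proposition~1.5]{Goodwillie3} and the connectivity of $\gamma(Y)\to S(J^1\cL)$, then apply \cite[Proposition~1.6]{Goodwillie3}, and compute the derivative with the projection formula.

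For polynomiality your route is different and more direct. Since $\gamma$ is a composite of colimit-preserving functors, it preserves strongly cocartesian cubes, so $\gamma^*$ preserves $n$-excisiveness immediately. The paper instead proves the homogeneous case first from the classification formula, then inducts up the tower along $\gamma^*D_kG\to\gamma^*P_kG\to\gamma^*P_{k-1}G$. (Minor point: $N\hookrightarrow S(J^1\cL)$ is not a fibration. So the homotopy fibre of $\gamma(Y)$ over $v$ is $\fib_v(N\to S(J^1\cL))_+\wedge S^{\cL_x}\wedge Y_x$, as in the proof of \cref{lemma:ansazt-nice-formula}, not $S^{\cL_x}\wedge Y_x$ or a point. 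Your connectivity bound is unaffected.)

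What does not hold up is the homogeneity claim in Step 1. Preserving the zero object only shows that $\gamma^*G$ is reduced, i.e.\ $P_0(\gamma^*G)\simeq 0$. An $n$-homogeneous functor needs $P_{n-1}(\gamma^*G)\simeq 0$, and for $n\geq 2$ this does not follow from being $n$-excisive, reduced and finitary.

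This gap also affects Step 3. Step 2 only gives $P_n(\gamma^*G)\simeq\gamma^*P_nG$, since stable $n$-excisiveness does not imply stable $(n-1)$-excisiveness. So "$D_n(\gamma^*G)\simeq\gamma^*D_nG$ by the previous step" also requires $P_{n-1}(\gamma^*G)\simeq\gamma^*P_{n-1}G$. You can get this by applying the exact functor $P_{n-1}$ to $\gamma^*D_nG\to\gamma^*P_nG\to\gamma^*P_{n-1}G$, using $P_{n-1}(\gamma^*G)\simeq P_{n-1}(\gamma^*P_nG)$. But that only works once you know $\gamma^*D_nG$ is $n$-homogeneous.

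There are two ways to repair this.
\begin{enumerate}
\item Run your Step 3 projection-formula computation for an \emph{arbitrary} $n$-homogeneous $G$, not just $D_nG$. This writes $\gamma^*G$ visibly in classified form, and is exactly how the paper proves homogeneity.
\item Push your colimit observation further. Up to natural equivalence, the fibrewise joins $Z*U$ used to define $T_k$ are the finite colimits $\cofib(\bigvee_U Z\to Z)$, built from $Z$ and the zero object. Hence $\gamma(Y*U)\simeq\gamma(Y)*U$ naturally in $U$. It follows that $T_k(\gamma^*G)\simeq\gamma^*T_kG$, and so $P_k(\gamma^*G)\simeq\gamma^*P_kG$ for every $k$ and every $G$. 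This gives homogeneity immediately, and makes Step 2 and the stable excision hypothesis unnecessary.
\end{enumerate}
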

\begin{proof}
Suppose first that $G$ is homogeneous, hence given by the formula
\[
    G(Z) \simeq \colim_{S(J^1\cL)^n \sslash \Sigma_n}\left(\partial_nG \otimes (\Sigma^\infty_{S(J^1\cL)}Z)^{\boxtimes n}\right).
\]
Then 
\begin{align*}
    \gamma^*G(Y) &= G\left((N \sqcup S(J^1\cL)) \wedge_{S(J^1\cL)} p^*(S^\cL \wedge_X Y)\right) \\
    &\simeq \colim_{S(J^1\cL)^n \sslash \Sigma_n} \left(\partial_nG \otimes \left(\Sigma^\infty_{S(J^1\cL),+}N \otimes p^*(\bS^\cL \otimes \Sigma^\infty_{X}Y)\right)^{\boxtimes n}\right) \\
    &\simeq \colim_{X^n \sslash \Sigma_n} \left(p_!\left(\partial_nG \otimes (\Sigma^\infty_{S(J^1\cL),+}N)^{\boxtimes n}\right) \otimes \bS^{n\cL} \otimes (\Sigma^\infty_{X}Y)^{\boxtimes n}\right)
\end{align*}
where the last equivalence follows from the projection formula. Hence $\gamma^*G$ is visibly $n$-homogeneous with derivative
\[
    p_!\left(\partial_nG \otimes (\Sigma^\infty_{S(J^1\cL),+}N)^{\boxtimes n}\right) \otimes \bS^{n\cL} \in \catSp^{X^n\sslash \Sigma_n}.
\]
The functor $\gamma^*(-)$ preserves fibre sequences, as these are defined pointwise in the functor category. Hence, for any $n \geq 1$ we have a fibre sequence $\gamma^*D_nG \to \gamma^*P_nG \to \gamma^*P_{n-1}G$. By induction on $n$ we see that if $G$ is polynomial, then so is $\gamma^*G$.

Suppose now that $G$ is stably $n$-excisive. As $\gamma^*P_nG$ is $n$-polynomial, the universal property of $P_n(\gamma^*G)$ \cite[Theorem~1.8]{Goodwillie3} shows that the map $\gamma^*G \to \gamma^*P_nG$ factors through $P_n(\gamma^*G)$. As $G$ is stably $n$-excisive, $G \to P_nG$ is an order $n$ agreement \cite[Proposition~1.5]{Goodwillie3}, hence so is $\gamma^*G \to \gamma^*P_nG$ because $\gamma(Y) \to S(J^1\cL)$ is at least as connected as $Y \to X$. Hence $P_n(\gamma^*G) \to P_n(\gamma^*P_nG)$ is an equivalence \cite[Proposition~1.6]{Goodwillie3}. But $\gamma^*P_nG$ is $n$-polynomial, so $P_n(\gamma^*P_nG) \simeq \gamma^*P_nG$.
\end{proof}

\begin{corollary}\label{corollary:differential-sections}
Let $n \geq 1$ be an integer. The $n$th differential of the functor
\[
    \catS_{X//X} \lra \catSp, \quad Y \longmapsto \Sigma^\infty_+\Gamma\left(X; (N \sqcup S(J^1\cL)) \wedge_{S(J^1\cL)} p^*(S^\cL \wedge_X Y)\right)
\]
is given by
\[
    Y \longmapsto \Gamma_c\left(\Conf_n(X); p_!\big(\mathrm{ev}_!\bS^0 \otimes (\Sigma^\infty_{S(J^1\cL),+}N)^{\boxtimes n}\big) \otimes \bS^{n\cL} \otimes (\Sigma^\infty_X Y)^{\boxtimes n}\right)_{h\Sigma_n}.
\]
\end{corollary}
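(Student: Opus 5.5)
We factor the functor as $\gamma^*G$, where $\gamma$ is the functor of \cref{eqn:precomposition} and $G = \Sigma^\infty_+\Gamma(X;-) \colon \catS_{S(J^1\cL)//S(J^1\cL)} \to \catSp$, with $E = S(J^1\cL)$ in the notation of \cref{section:goodwillie}. The plan is to read off the derivatives of $G$ from \cref{theorem:goodwillie-approximation}, transfer them to $\gamma^*G$ via \cref{lemma:pulled-back-tower}, and then rewrite the resulting homogeneous layer as a compactly supported section spectrum over $\Conf_n(X)$.

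First we check that $G$ is stably $n$-excisive for every $n$, so that \cref{lemma:pulled-back-tower} applies. By \cref{proposition:order-n-agreement} (with \cref{remark:tower-extra-basepoint} for the extra basepoint), the map $G(Z) \to A_n(X,Z)$ is $(n+1)(\conn(Z/E)-\dim X)$-connected. Together with the $n$-polynomiality of $A_n(X,-)$ from \cref{proposition:second-variable-excision}, this should give stable $n$-excisiveness with constants $c = -\dim X$ and $\kappa = \dim X$ in Goodwillie's sense. Hence $P_n(\gamma^*G) \simeq \gamma^*P_nG$, and therefore $D_n(\gamma^*G) \simeq \gamma^*D_nG$, since $\gamma^*$ preserves fibre sequences.

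Next we compute $D_nG(Z)$ directly. We use the formula of \cref{lemma:layer-computation}, rather than the closed-manifold derivative formula, because it keeps the $\Gamma_c$ shape:
\[
D_nG(Z) \simeq \Gamma_c\big(\Conf_n(X); \cE^\infty_n(X,Z)\big)_{h\Sigma_n}.
\]
As in the proof of \cref{corollary:homogeneity}, applying \cref{lemma:ev-fibrant-replacement} gives
\[
\cE^\infty_n(X,Z) \simeq p_!\big((\Sigma^\infty_E Z)^{\boxtimes n} \otimes \mathrm{ev}_!\bS^0\big).
\]
We then substitute $Z = \gamma(Y)$. Since $\Sigma^\infty_E\gamma(Y) \simeq \Sigma^\infty_{E,+}N \otimes p^*(\bS^\cL \otimes \Sigma^\infty_X Y)$, the projection formula pulls the factors $p^*(\bS^{\cL} \otimes \Sigma^\infty_X Y)^{\boxtimes n}$ out of $p_!$. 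This leaves
\[
p_!\big(\mathrm{ev}_!\bS^0 \otimes (\Sigma^\infty_{S(J^1\cL),+}N)^{\boxtimes n}\big) \otimes \bS^{n\cL} \otimes (\Sigma^\infty_X Y)^{\boxtimes n},
\]
restricted to $\Conf_n(X)$, which is the claimed formula. All identifications are natural and $\Sigma_n$-equivariant, so homotopy orbits commute with them.

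The main obstacle is the first step: verifying honestly that $G$ is stably $n$-excisive, and checking that the connectivity hypothesis in \cref{lemma:pulled-back-tower} is adequate on the relevant subcategory. This is needed because the conclusion is only claimed for the universal approximation. If stable excision turns out to be delicate, we would instead argue directly: $\gamma^*A_n(X,-)$ is $n$-polynomial by \cref{lemma:pulled-back-tower}, and $\gamma^*G \to \gamma^*A_n(X,-)$ is an order $n$ agreement, because $\conn(\gamma(Y)/E) \geq \conn(Y/X)$ and \cref{proposition:order-n-agreement} applies. By \cite[Proposition~1.6]{Goodwillie3} this identifies $P_n(\gamma^*G) \simeq \gamma^*A_n(X,-)$, and the layer computation above then goes through unchanged.
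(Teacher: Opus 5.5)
Your overall strategy is the paper's. You factor the functor as $\gamma^*G$ with $G=\Sigma^\infty_+\Gamma(X;-)$ on $\catS_{S(J^1\cL)//S(J^1\cL)}$, use \cref{lemma:pulled-back-tower} to get $D_n(\gamma^*G)\simeq\gamma^*D_nG$, and read off $D_nG$ from \cref{section:goodwillie}.

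One step of your main line does not work as written. You cannot deduce stable $n$-excisiveness of $G$ from \cref{proposition:order-n-agreement} together with the polynomiality of $A_n(X,-)$. An order $n$ agreement bounds the connectivity of $G(Z)\to A_n(X,Z)$ by $(n+1)(\conn(Z/E)-\dim X)$, which is useless unless $Z\to E$ is highly connected. By contrast, $E_n(c,\kappa)$ in \cite[Definition~4.1]{Goodwillie2} is a condition on all strongly cocartesian cubes whose initial edges are $k_s$-connected, and the initial vertex may have arbitrarily low connectivity over $E$. Agreement with an $n$-excisive functor near $E$ says nothing about such cubes. The paper does not argue this way: it simply asserts stable excisiveness, citing \cite[Example~4.5]{Goodwillie2} for the analogous statement for mapping spaces.

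Your fallback is correct, and it is the more economical route. In the proof of \cref{lemma:pulled-back-tower}, stable excision is used only to know that $G\to P_nG$ is an order $n$ agreement. \Cref{proposition:order-n-agreement}, combined with $P_nG\simeq A_n(X,-)$ from \cref{theorem:goodwillie-approximation}, gives exactly this directly. Since $\conn(\gamma(Y)/E)\geq\conn(Y/X)$ and $\gamma^*A_n(X,-)$ is $n$-polynomial, \cite[Proposition~1.6]{Goodwillie3} yields $P_n(\gamma^*G)\simeq\gamma^*A_n(X,-)$ with no excision hypothesis on $G$ at all.

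A smaller difference concerns how you compute the layer. You go through \cref{lemma:layer-computation}, which keeps the $\Gamma_c(\Conf_n(X);-)_{h\Sigma_n}$ shape throughout. The paper's one-line proof instead implicitly plugs the closed-manifold derivative $\mathrm{ev}_!\bS^0\otimes p^*\bS^{-TX^n}\otimes p^*\omega_!\bS^0$ into the derivative formula of \cref{lemma:pulled-back-tower} and then undoes Poincaré duality on $\Conf_n(X)$. Your version lands on the stated formula more directly.
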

\begin{proof}
This follows directly from \cref{theorem:goodwillie-approximation,lemma:pulled-back-tower}, using that the section space functor $\Sigma^\infty\Gamma(X;-)$ is stably $n$-excisive for all $n$ (see \cite[Example~4.5]{Goodwillie2} for a proof in the very similar case of mapping spaces).
\end{proof}

\subsection{Comparison of derivatives}

In this section, we analyse the natural transformation~\eqref{eqn:main-map}
\[
    \Sigma^\infty_+ \cM^\mathrm{tub}(\cL;Y) \lra \Sigma^\infty_+\Gamma\left(X; (N \sqcup S(J^1\cL)) \wedge_{S(J^1\cL)} p^*(S^\cL \wedge_X Y)\right)
\]
via the induced map between the $n$th differentials. 

We first start with some notation, and then construct a concrete model for the induced map on differentials. When restricted to the configuration space, the evaluation map
\[
    \mathrm{ev} \colon \Conf_n(X) \times \Gamma(S(J^1\cL)) \to S(J^1\cL)^n|_{\Conf_n(X)}
\]
is a fibre bundle, see \cite[Lemma~3.9]{withronno}. Hence $\mathrm{ev}_!S^0$ is a retractive space over $S(J^1\cL)^n|_{\Conf_n(X)}$ with fibre equivalent to the strict fibre of $\mathrm{ev}$ with an added basepoint (cf. \cref{subsection:recollections}). Therefore the retractive space
\[
    p_!\big(\mathrm{ev}_!S^0 \wedge (N \sqcup S(J^1\cL))^{\ovwedge n}\big) \lra \Conf_n(X)
\]
can be identified with the space
\begin{equation}\label{eqn:point-set-identification}
    \set{(c,f,v) \in \Conf_n(X) \times \Gamma(S(J^1\cL)) \times N^n}{f(x_i) = v_i, \ i=1,\dotsc,n } \sqcup \Conf_n(X),
\end{equation}
where we write $c = (x_1,\dotsc,x_n)$ and $v = (v_1,\dotsc,v_n)$. We can thus define a map of bundles:
\begin{equation*}
\begin{split}
    \Gamma_c(\cZ^\mathrm{tub}_n(\cL),q^*(\Sigma^k_X Y)^{\ovwedge n}) &\lra \Gamma_c\left(\Conf_n(X); p_!\big(\mathrm{ev}_!S^0 \wedge (N \sqcup S(J^1\cL))^{\ovwedge n}\big) \wedge S^{n\cL} \wedge (\Sigma^k_X Y)^{\ovwedge n}\right) \\
    (s,g) &\longmapsto \left[ c \mapsto \begin{cases}
                                        (c, \Psi(s,-), \Psi(s,c)) \wedge \can(s,c) \wedge g(c) &\text{if } c \in \Conf_n^{\mathrm{tub}}(V(s))\\
                                        \text{fibrewise basepoint} &\text{else}
                                    \end{cases}\right]
\end{split}
\end{equation*}
Recall that the source was defined in \cref{eqn:fibrewise-compact-bundle}, and the target is seen as a trivial (product) bundle.
In that formula, we have written $(c, \Psi(s,-), \Psi(s,c)) \in p_!\big(\mathrm{ev}_!S^0 \wedge (N \sqcup S(J^1\cL))^{\ovwedge n}\big)$ using our identification~\eqref{eqn:point-set-identification}. And for $c = (x_1,\dotsc,x_n)$, we write $\can(s,c) = \can(s,x_1) \wedge \dots \wedge \can(s,x_n) \in S^{n\cL}$ for the natural extensions of the canonical section~\eqref{eqn:can-section} to the configuration spaces. Taking sections, this yields a map of locally constant sheaves of spectra (cf. \cref{lemma:exceptional-pushforward-formula})
\begin{equation*}
    \pi_!^\sh\left( q^*(\Sigma^\infty_X Y)^{\boxtimes n}\right) \lra \operatorname{cst} \Gamma_c\left(\Conf_n(X); p_!\big(\mathrm{ev}_!\bS^0 \otimes (\Sigma^\infty_{S(J^1\cL),+}N)^{\boxtimes n}\big) \otimes \bS^{n\cL} \otimes (\Sigma^\infty_X Y)^{\boxtimes n}\right)
\end{equation*}
where $\operatorname{cst}(-)$ denotes the constant sheaf on $\cM(\cL)$ associated to a spectrum. By the adjunction $\colim_{\cM(\cL)}(-) \dashv \mathrm{cst}(-)$, we obtain an adjoint map:
\begin{equation}\label{eqn:differential-map-adjoint}
    \colim_{\cM(\cL)} \ \pi_!^\sh\left( q^*(\Sigma^\infty_X Y)^{\boxtimes n}\right) \to \Gamma_c\left(\Conf_n(X); p_!\big(\mathrm{ev}_!\bS^0 \otimes (\Sigma^\infty_{S(J^1\cL),+}N)^{\boxtimes n}\big) \otimes \bS^{n\cL} \otimes (\Sigma^\infty_X Y)^{\boxtimes n}\right).
\end{equation}
\begin{lemma}\label{lemma:map-on-differentials}
Let $n \geq 1$. Under the identifications of \cref{corollary:differential-is-colimit,corollary:differential-sections}, taking $\Sigma_n$-homotopy orbits on both sides of the map~\eqref{eqn:differential-map-adjoint} gives the map
\[
    D_n\Sigma^\infty_+ \cM^\mathrm{tub}(\cL;Y) \lra D_n\Sigma^\infty_+\Gamma\left(X; (N \sqcup S(J^1\cL)) \wedge_{S(J^1\cL)} p^*(S^\cL \wedge_X Y)\right)
\]
on $n$th differentials induced by the natural transformation~\eqref{eqn:main-natural-transformation}.
\end{lemma}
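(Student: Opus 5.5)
The plan is to compute the map on differentials as the colimit over $\cM(\cL)$ of a fibrewise map, and to check fibrewise that this map is the one in \eqref{eqn:differential-map-adjoint}. By \cref{proposition:parameterised-tower}, the natural transformation \eqref{eqn:main-natural-transformation} can be written as a composite of two maps. The first is $\Sigma^\infty_+\cM^\mathrm{tub}(\cL;Y)\simeq\colim_{\cM(\cL)}\Sigma^\infty_{\cM(\cL),+}\cM^\mathrm{tub}(\cL;Y)$. The second is the colimit of the adjoint of a map of parameterised spectra to the constant parameterised spectrum on the target. On each fibre over $s$, this fibrewise map is $\Sigma^\infty_+\Gamma(Y|_{\tub V(s)})\to \Sigma^\infty_+\Gamma(X;\gamma(Y))$, induced by the scanning map \eqref{eqn:scanning-map} restricted to $s$. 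Since $P_n$ and $D_n$ commute with colimits and are natural, $D_n$ of the transformation is the adjoint of $D_n$ applied fibrewise. So it suffices to identify, naturally in $s$, the map on $n$th layers
\[
D_n\Sigma^\infty_+\Gamma(\tub V(s);-)\lra D_n\Sigma^\infty_+\Gamma(X;\gamma(-)).
\]

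To compute this map I would use the explicit models. Restriction along $\tub V(s)\hookrightarrow X$, pulled back through the section $\Psi(s,-)$, is a map of sections. Together with the fibrewise smash with $\can(s,-)$, it induces a map $A^{+}_n(\tub V(s),Y)\to A^{+}_n(X,\gamma(Y))$ (with $E=S(J^1\cL)$ on the right). It is given levelwise on tuples $(s_1,\dots,s_n)$ by the same formula as \eqref{eqn:adjointmap}: a point $(b_1,\dots,b_i)\in X^i$ goes to $\Psi(s,b_\bullet)\wedge\can(s,b_\bullet)\wedge s_i(b_\bullet)$ if all $b_j\in\tub V(s)$, and to the basepoint otherwise. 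Continuity holds because $\can$ sends $\partial\tub V(s)$ to infinity. By construction this map is compatible with the maps \eqref{eqn:Vs-tower-map} and \eqref{eqn:goodwillie-tower-map}. \Cref{lemma:pulled-back-tower} identifies $P_n$ of the target with $\gamma^*A_n(X,-)$, so by the universal property of $P_n$ this map models $P_n$ of the fibrewise scanning map. It commutes with forgetting $s_n$, so it induces a map on strict fibres.

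Using the proof of \cref{lemma:layer-computation}, I would compute these strict fibres as equivariant sections of $\cE^k_n$ vanishing on the fat diagonal, i.e. compactly supported sections over configuration spaces. On the source, \cref{lemma:configuration-tube-replacement} replaces $\Conf_n(\tub V(s))$ by $\Conf^{\mathrm{tub}}_n(V(s))$. On the target, the identification \eqref{eqn:point-set-identification} of $p_!(\mathrm{ev}_!S^0\wedge(N\sqcup S(J^1\cL))^{\ovwedge n})$ shows that the induced map on compactly supported sections is exactly the pointwise formula defining the bundle map before \eqref{eqn:differential-map-adjoint}: $c\mapsto(c,\Psi(s,-),\Psi(s,c))\wedge\can(s,c)\wedge g(c)$. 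Naturality in $s$ then assembles these maps into the map of sheaves $\pi^\sh_!(\dots)\to\operatorname{cst}(\dots)$ via \cref{lemma:exceptional-pushforward-formula}. Taking the colimit over $\cM(\cL)$ and $\Sigma_n$-homotopy orbits (through the categorical fixed points and the Adams isomorphism, as in \cref{lemma:layer-computation}) gives the claim.

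I expect the main obstacle to be the identification on the target. There, the point-set model $A_n(X,\gamma(Y))$ uses the non-fibrant retractive space $(X^i\times\Gamma(E))\sqcup E^i$, while the formula in \cref{corollary:differential-sections} uses $\mathrm{ev}_!$ and $p_!$ together with the equivalence of \cref{lemma:pulled-back-tower}. I would handle this with \cref{lemma:ev-fibrant-replacement}. Over $\Conf_n(X)$, the evaluation map is already a fibre bundle, so the strict model computes $\mathrm{ev}_!$ on the nose. I would then check that the projection-formula equivalence in \cref{lemma:pulled-back-tower} is realised by the evident point-set map regrouping the smash factors. With this in place, the two identifications agree on the nose, and the square commutes strictly rather than merely up to homotopy.
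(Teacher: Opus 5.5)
Your proposal is correct and is essentially the paper's proof. That proof consists only of the assertion that the lemma follows by chasing the explicit formulas for the scanning map \eqref{eqn:scanning-map} and for \eqref{eqn:adjointmap} through the identifications of \cref{corollary:differential-is-colimit,corollary:differential-sections}; your explicit map $A^+_n(\tub V(s),Y)\to A^+_n(X,\gamma(Y))$, your appeal to the universal property of $P_n$, and your strict-fibre computation are exactly that chase written out.

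One technical correction concerns which non-fibrancy matters. It is not that of $(X^i\times\Gamma(E))\sqcup E^i$, which \cref{definition:An} and \cref{lemma:exfibration} already accommodate, but that of $\gamma(Y)$ over $S(J^1\cL)$, because $N\hookrightarrow S(J^1\cL)$ is not a fibration. So the point-set $A_n(X,\gamma(Y))$ need not have the correct homotopy type over the fat diagonal, and you should postcompose with $A^+_n(X,\gamma(Y))\to A^+_n(X,R\gamma(Y))$, for an ex-fibrant replacement $R\gamma(Y)$, before invoking the universal property and \cref{lemma:layer-computation}. Over $\Conf_n(X)$, where $\mathrm{ev}$ is a fibre bundle, this comparison is a fibrewise equivalence, so your final formula is unchanged.
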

\begin{proof}
This follows from chasing through our definitions using the concrete formulas for our scanning map~\eqref{eqn:scanning-map} and the Goodwillie approximation~\eqref{eqn:adjointmap}, together with our computations of the differentials in \cref{corollary:differential-is-colimit,corollary:differential-sections}.
\end{proof}

To really compare the differentials, we shall rewrite the spectrum
\[
    \colim_{\cM(\cL)} \ \pi_!^\sh\left( q^*(\Sigma^\infty_X Y)^{\boxtimes n}\right)_{h\Sigma_n} \simeq D_n\Sigma^\infty_+ \cM^\mathrm{tub}(\cL;Y)
\]
in terms of the configuration space $\Conf_n(X)$. Let us factor the projection
\[
    \cZ^\mathrm{tub}_n(\cL) \sqcup \Conf_n(X) \to \Conf_n(X)
\]
through an acyclic cofibration $\eta \colon \cZ^\mathrm{tub}_n(\cL) \sqcup \Conf_n(X) \overset{\simeq}{\to} q_!S^0$ such that $q_!S^0 \to \Conf_n(X)$ is an ex-fibration. (We do this so that our later section spaces have the correct homotopy type.) There is a map of bundles
\begin{equation*}
\begin{split}
    \Gamma_c(\cZ^\mathrm{tub}_n(\cL),q^*(\Sigma^k_X Y)^{\ovwedge n}) &\lra \Gamma_c\left(\Conf_n(X); q_!S^0 \wedge S^{n\cL} \wedge (\Sigma^k_X Y)^{\ovwedge n}\right) \\
    (s,g) &\longmapsto \left[ c \mapsto \begin{cases}
                        \eta(s,c) \wedge \can(s,c) \wedge g(c) &\text{if } c \in \Conf_n^{\mathrm{tub}}(V(s))\\
                        \text{fibrewise basepoint} &\text{else}
                    \end{cases} \right]
\end{split}
\end{equation*}
where the target is seen as a trivial bundle over $\cM(\cL)$. As above, we thus obtain a map of locally constant sheaves of spectra on $\cM(\cL)$
\begin{equation}\label{eqn:sheaf-to-conf}
    \pi_!^\sh\left( q^*(\Sigma^\infty_X Y)^{\boxtimes n}\right) \lra \operatorname{cst}\Gamma_c\left(\Conf_n(X); q_!\bS^0 \otimes \bS^{n\cL} \otimes (\Sigma^\infty_X Y)^{\boxtimes n}\right)
\end{equation}
where the target spectrum is interpreted as a constant sheaf.
\begin{lemma}\label{lemma:basechange-rewriting}
The adjoint of the map~\eqref{eqn:sheaf-to-conf}
\[
    \colim_{\cM(\cL)} \ \pi_!^\sh\left( q^*(\Sigma^\infty_X Y)^{\boxtimes n}\right) \lra \Gamma_c\left(\Conf_n(X); q_!\bS^0 \otimes \bS^{n\cL} \otimes (\Sigma^\infty_X Y)^{\boxtimes n}\right)
\]
is an equivalence.
\end{lemma}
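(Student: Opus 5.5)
The plan is to recognise both sides as the same compactly supported pushforward, computed once over $\cM(\cL)$ and once over $\Conf_n(X)$. Write $\cF = q^*\big(\bS^{n\cL} \otimes (\Sigma^\infty_X Y)^{\boxtimes n}\big)$ on $\cZ^\mathrm{tub}_n(\cL)$. First I would use the canonical section to untwist the coefficients. Over $\cZ^\mathrm{tub}_n(\cL)$, the sections $\can(s,x_i)$ trivialise the sphere bundle $q^*\bS^{n\cL}$ on the open locus where they do not hit infinity. More precisely, $\can$ exhibits $q^*\bS^{n\cL}$ as the one-point compactified normal data of the fibrewise zero locus. So smashing with $\can(s,c)$ realises, fibrewise over $\cM(\cL)$, the compactly supported sections of $q^*(\Sigma^\infty_X Y)^{\boxtimes n}$ on the tube as sections over the configuration.

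The key step is to compute the source in two ways. Using the composite $\cZ^\mathrm{tub}_n(\cL) \to \cM(\cL) \to *$, the source $\colim_{\cM(\cL)} \pi^\sh_!(\cdots)$ is a compactly supported global section spectrum on the total space $\cZ^\mathrm{tub}_n(\cL)$. This uses \cref{lemma:exceptional-pushforward-formula}, the fact that $\cM(\cL)$ is a manifold (so $\colim$ equals $\Gamma_c$ twisted by $\bS^{-T\cM(\cL)}$, via the Volpe duality argument of \cref{corollary:homogeneity}), and composition of exceptional pushforwards. I would then push forward instead along $q \colon \cZ^\mathrm{tub}_n(\cL) \to \Conf_n(X)$. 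This map is a smooth fibre bundle, since $\cZ^\mathrm{tub}_n(\cL)$ is an open submanifold of $\cM(\cL) \times \Conf_n(X)$ and $q$ is a submersion onto its image. Its exceptional pushforward is $q_!$ twisted by the relative tangent bundle. The identification $q_!^\sh q^*(-) \simeq q_!(\bS^{-T_q}) \otimes (-)$ uses the projection formula and the Volpe duality.

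I then need to match the twists. The relative tangent bundle of $q$ is $T\cM(\cL)$ minus the normal directions cut out by the $n$ conditions ``$x_i$ lies in the tube of $V(s)$''. The fibre of $q$ over $c$ is $\set{s \in \cM(\cL)}{\Psi(s,x_i) \in N \text{ for all } i}$, an open subset of $\cM(\cL)$. The Thom collapse given by $\can(s,x_i) \in S^{\cL}_{x_i}$ collapses its boundary and provides exactly the $\bS^{n\cL}$ twist. In other words, the closed conditions $\lVert s(x_i)\rVert \le \varepsilon \lVert \mathrm{d}s(x_i)\rVert$ have normal bundle $\bigoplus_i \cL_{x_i}$ with framing given by $\can$. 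So $\bS^{-T\cM(\cL)}$ combined with the $\can$ collapse yields $q_!\bS^0 \otimes \bS^{n\cL}$ up to the trivial bundle $T\cM(\cL)$, which is trivial because $\cM(\cL)$ is open in a vector space; this trivialisation also kills the $\bS^{-T\cM(\cL)}$ twist above. Finally I would check that the resulting composite equivalence is the point-set map~\eqref{eqn:sheaf-to-conf} built from $\eta$ and $\can$, by a stalkwise check at a point $c$ via proper base change.

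The main obstacle is this twist bookkeeping, namely verifying that the map defined with $\can$ really is the Pontryagin--Thom or Costenoble--Waner collapse realising $q^\sh_! \simeq q_!(-\otimes \bS^{n\cL})$. The delicate point is properness: the fibres of $q$ are open and non-compact, and the tube boundary must go to the basepoint. To handle it, I would first prove the claim for $n=1$ in a local chart, where the fibre is a vector space minus a closed cone and $\can$ is visibly a fibrewise Thom class. I would then argue that the general case is the $n$-fold external product, using that the local conditions at distinct points are transverse.
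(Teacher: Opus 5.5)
\textbf{There is a genuine gap, in your second Fubini step: the pushforward along $q\colon \cZ^\mathrm{tub}_n(\cL)\to\Conf_n(X)$.}

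This map is not a fibre bundle. That is not merely a point-set issue, although $\cZ^\mathrm{tub}_n(\cL)$ is also not open, since it involves the closed conditions $x_i\in\tub V(s)$. Write $\mathrm{ev}_c\colon H^0(X,\cL)\to\bigoplus_i\cL_{x_i}$ for evaluation at $c=(x_1,\dotsc,x_n)$. The strict fibre of $\tilde q\colon\cZ_n(\cL)\to\Conf_n(X)$ over $c$ is $\cM(\cL)\cap\ker(\mathrm{ev}_c)$, and the fibres of $q$ are fattenings of these.

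The rank of $\mathrm{ev}_c$ jumps with $c$ once $n$ is large compared to the positivity of $\cL$. For $n\geq h^0(X,\cL)$ the fibre is empty for general $c$, but contains $s_0$ whenever $c\subset V(s_0)$. The lemma is needed for every $n\geq 1$ in the proof of \cref{theorem:main}, so this cannot be avoided.

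Consequently the identification $q^\sh_!q^*(-)\simeq q_!(\bS^{-T_q})\otimes(-)$ is false. By proper base change, the stalk of $q^\sh_!q^*(-)$ at $c$ is computed from compactly supported sections on the \emph{strict} fibre. By contrast, $q_!\bS^0$ is built from homotopy fibres, which is why the paper passes through the ex-fibration replacement $\eta$.

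For the same reason, your collapse in the $\cM(\cL)$-direction gives the wrong Thom twist at $c$. It produces the sphere of the normal bundle of $\ker(\mathrm{ev}_c)$ in $H^0(X,\cL)$, that is, of $\mathrm{im}(\mathrm{ev}_c)$. This equals $\bigoplus_i\cL_{x_i}$ only when the $n$ vanishing conditions are independent. Conditions at distinct points are transverse in the $X$-directions, not in the $\cM(\cL)$-direction, so your reduction to $n=1$ by external products also breaks. In particular, the stalkwise check over $\Conf_n(X)$ that you propose would fail. The comparison is not an equivalence of sheaves on $\Conf_n(X)$; it becomes one only after taking global compactly supported sections.

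\textbf{The missing idea is to put the Thom collapse in the configuration direction, and to pass to $\Conf_n(X)$ only homotopically.} Because $\mathrm{d}s(x_i)\neq 0$, the submanifold $\cZ_n(\cL)\subset\cM(\cL)\times\Conf_n(X)$ is cut out transversally by $(s,c)\mapsto(s(x_i))_i$ using the $c$-variables alone. Hence its normal bundle is $\tilde q^*(n\cL)$ with no independence hypothesis, while $\pi$ is an honest bundle by Ehresmann. The paper then proceeds in three steps:
\begin{enumerate}
\item It shows that $\Phi$ factors through restriction to $\cZ_n(\cL)$.
\item Under $(i\circ\iota)^!\overline{q}^*(\bS^{n\cL}\otimes(-))\simeq\tilde q^*(-)$, it identifies the adjoint of $\Phi'$ with the unit $\bS^0\to\tilde q^*\tilde q_!\bS^0$. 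This is the precise sense in which $\can$ is a Thom collapse.
\item It uses Atiyah duality on the manifold $\cZ_n(\cL)$ to reduce to the projection formula $\colim_{\cZ_n(\cL)}\tilde q^*(-)\simeq\colim_{\Conf_n(X)}(\tilde q_!\bS^0\otimes(-))$, which holds for an arbitrary map of spaces.
\end{enumerate}
Your first step, rewriting $\colim_{\cM(\cL)}\pi^\sh_!(\cdots)$ as a shifted compactly supported section spectrum of the total space, is compatible with this route. It is the sheaf-theoretic pushforward along $q$ that has to be replaced.
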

\begin{proof}
Formal manipulations with sheaves show that the spectra in the lemma are abstractly equivalent. However, as we want to keep track of the maps involved, we shall present a slightly cumbersome argument. 
Let us start by giving names to maps in the following commutative diagram:
\[
\begin{tikzcd}
\cZ_n(\cL) \arrow[r, hook, "\iota"] & \cZ^\mathrm{tub}_n(\cL) \arrow[r, hook, "i"] \arrow[d, "\pi"'] & |[label={[label distance=-2mm]-45:\lrcorner}]|\cM(\cL) \times \Conf_n(X) \arrow[d, "\varpi"'] \arrow[r, "\overline{q}"] & \Conf_n(X) \arrow[d, "\overline{r}"] \\
& \cM(\cL) \arrow[r, equal] & \cM(\cL) \arrow[r, "r"']  & *
\end{tikzcd}
\]
We shall also abbreviate $q = \overline{q} \circ i$ and $\tilde{q} = q \circ \iota$.
Smashing a section with $\eta \wedge \can$ gives a morphism
\[
    \Gamma_c\left(i^{-1}(U); q^*(\Sigma^\infty_X Y)^{\boxtimes n}\right) \simeq \Gamma_c\left(U; i^\sh_!q^*(\Sigma^\infty_X Y)^{\boxtimes n}\right) \lra \Gamma_c\left(U; \overline{q}^*(q_!\bS^0 \otimes \bS^{n\cL} \otimes (\Sigma^\infty_X Y)^{\boxtimes n}) \right)
\]
naturally for any open subset $U \subset \cM(\cL) \times \Conf_n(X)$. By \cite[below Definition~6.1]{Volpe}, this defines a morphism of sheaves
\[
    \Phi \colon i^\sh_!q^*(\Sigma^\infty_X Y)^{\boxtimes n} \lra \overline{q}^*\left(q_!\bS^0 \otimes \bS^{n\cL} \otimes (\Sigma^\infty_X Y)^{\boxtimes n}\right).
\]
By construction, the morphism~\eqref{eqn:sheaf-to-conf} is $\varpi^\sh_! \Phi$ after base-change identification. On $\cM(\cL) \times \Conf_n(X) - \cZ_n(\cL)$, the morphism $\Phi$ is null-homotopic with a null-homotopy induced from the fibrewise radial retraction $S^{n\cL} - \{0\} \simeq \{\infty\}$ of the punctured sphere to the point at infinity. It follows from the localisation sequence that $\Phi$ factors through the restriction map to $\cZ_n(\cL)$
\[
    \Phi \colon i^\sh_!q^*(\Sigma^\infty_X Y)^{\boxtimes n} \overset{\mathrm{res}}{\lra} (i\circ \iota)^\sh_!\tilde{q}^*(\Sigma^\infty_X Y)^{\boxtimes n} \overset{\Phi'}{\lra} \overline{q}^*\left(q_!\bS^0 \otimes \bS^{n\cL} \otimes (\Sigma^\infty_X Y)^{\boxtimes n}\right).
\]
Now, the proper radial deformation retraction of the tubular configuration space and local constancy of the coefficient spectrum show that restriction induces a homotopy equivalence
\[
    \pi_!^\sh\left( q^*(\Sigma^\infty_X Y)^{\boxtimes n}\right) \simeq \pi_!^\sh \iota^\sh_!\left( \tilde{q}^*(\Sigma^\infty_X Y)^{\boxtimes n}\right),
\]
hence the map~\eqref{eqn:sheaf-to-conf} is equivalent to the composite
\begin{align*}
    \pi_!^\sh \iota^\sh_!\left( \tilde{q}^*(\Sigma^\infty_X Y)^{\boxtimes n}\right) &\overset{\varpi^\sh_! \Phi'}{\lra} \varpi^\sh_!\overline{q}^*\left(\tilde{q}_!\bS^0 \otimes \bS^{n\cL} \otimes (\Sigma^\infty_X Y)^{\boxtimes n}\right) \\
    &\lra \operatorname{cst}\Gamma_c\left(\Conf_n(X); \tilde{q}_!\bS^0 \otimes \bS^{n\cL} \otimes (\Sigma^\infty_X Y)^{\boxtimes n}\right)
\end{align*}
where the second morphism is the base-change morphism, and we have implicitly used that $q_!\bS^0 \simeq \tilde{q}_!\bS^0$. First, the adjoint of the base-change morphism is
\begin{align*}
    \overline{q}^*\left(\tilde{q}_!\bS^0 \otimes \bS^{n\cL} \otimes (\Sigma^\infty_X Y)^{\boxtimes n}\right) &\overset{\overline{q}^*v}{\lra} \overline{q}^* \overline{r}^! \overline{r}^\sh_!\left(\tilde{q}_!\bS^0 \otimes \bS^{n\cL} \otimes (\Sigma^\infty_X Y)^{\boxtimes n}\right) \\
    &\overset{\simeq}{\lra} \varpi^!r^*\overline{r}^\sh_!\left(\tilde{q}_!\bS^0 \otimes \bS^{n\cL} \otimes (\Sigma^\infty_X Y)^{\boxtimes n}\right)
\end{align*}
where $v \colon \left(\tilde{q}_!\bS^0 \otimes \bS^{n\cL} \otimes (\Sigma^\infty_X Y)^{\boxtimes n}\right) \to \overline{r}^! \overline{r}^\sh_! \left(\tilde{q}_!\bS^0 \otimes \bS^{n\cL} \otimes (\Sigma^\infty_X Y)^{\boxtimes n}\right)$ is the unit map. Second, under the equivalence \cite[Corollary~7.12]{Volpe}
\[
    (i \circ \iota)^!\overline{q}^*\left(\tilde{q}_!\bS^0 \otimes \bS^{n\cL} \otimes (\Sigma^\infty_X Y)^{\boxtimes n}\right) \simeq \tilde{q}^*\left(\tilde{q}_!\bS^0 \otimes (\Sigma^\infty_X Y)^{\boxtimes n}\right),
\]
the adjoint of $\Phi'$ is
\[
    u \otimes \mathrm{id}_{\tilde{q}^*(\Sigma^\infty_X Y)^{\boxtimes n}} \colon \tilde{q}^*(\Sigma^\infty_X Y)^{\boxtimes n} \lra \tilde{q}^*\left(\tilde{q}_!\bS^0 \otimes (\Sigma^\infty_X Y)^{\boxtimes n}\right)
\]
where $u \colon \bS^0 \to \tilde{q}^*\tilde{q}_!\bS^0$ is the unit map. This is the key point of our unpacking the definitions: up to the homotopical replacement $\eta$, the map $\can$ implements a Thom collapse map. Under relative Atiyah duality \cite[\S~7]{Volpe}, the map of the lemma is canonically identified with the map
\[
    \colim_{\cZ_n(\cL)} \tilde{q}^*\left(\bS^{n\cL - T\Conf_n(X)} \otimes (\Sigma^\infty_X Y)^{\boxtimes n}\right) \lra \colim_{\Conf_n(X)} \left(\tilde{q}_!\bS^0 \otimes \bS^{n\cL - T\Conf_n(X)} \otimes (\Sigma^\infty_X Y)^{\boxtimes n}\right)
\]
induced by the adjoint of $u \otimes \mathrm{id}_{\tilde{q}^*(\bS^{n\cL - T\Conf_n(X)} \otimes (\Sigma^\infty_X Y)^{\boxtimes n})}$
\[
    \tilde{q}_! \tilde{q}^* \left(\bS^{n\cL - T\Conf_n(X)} \otimes (\Sigma^\infty_X Y)^{\boxtimes n}\right) \overset{\simeq}{\lra} \tilde{q}_!\bS^0 \otimes \bS^{n\cL - T\Conf_n(X)} \otimes (\Sigma^\infty_X Y)^{\boxtimes n}.
\]
This map is an equivalence by the projection formula.
\end{proof}

Under the identification~\eqref{eqn:point-set-identification}, we may define the following map of spaces over $\Conf_n(X)$:
\begin{align*}
    \cZ^\mathrm{tub}_n(\cL) &\lra p_!\big(\mathrm{ev}_!S^0 \wedge (N \sqcup S(J^1\cL))^{\ovwedge n}\big) \\
    (s,c) &\longmapsto (c, \Psi(s,-), \Psi(s,c))
\end{align*}
By fibrewise suspending, and using the fibrant replacement $\cZ^\mathrm{tub}_n(\cL) \sqcup \Conf_n(X) \simeq q_!S^0$ as above, it yields a map
\begin{equation}\label{eqn:configuration-jet-expansion}
    q_!\bS^0 \lra p_!\big(\mathrm{ev}_!\bS^0 \otimes (\Sigma^\infty_{S(J^1\cL),+}N)^{\boxtimes n}\big).
\end{equation}
By combining \cref{lemma:map-on-differentials,lemma:basechange-rewriting} we finally obtain:
\begin{proposition}\label{proposition:final-identification-map-differentials}
Let $n \geq 1$. The map on the $n$th Goodwillie derivatives
\[
    D_n\left(\Sigma^\infty_+ \cM^\mathrm{tub}(\cL;Y)\right) \lra D_n\left(\Sigma^\infty_+\Gamma\left(X; (N \sqcup S(J^1\cL)) \wedge_{S(J^1\cL)} p^*(S^\cL \wedge_X Y)\right)\right)
\]
can be identified with the morphism
\begin{align*}
    &\Gamma_c\left(\Conf_n(X); q_!\bS^0 \otimes \bS^{n\cL} \otimes (\Sigma^\infty_X Y)^{\boxtimes n}\right)_{h\Sigma_n} \\
    &\lra \Gamma_c\left(\Conf_n(X); p_!\big(\mathrm{ev}_!\bS^0 \otimes (\Sigma^\infty_{S(J^1\cL),+}N)^{\boxtimes n}\big) \otimes \bS^{n\cL} \otimes (\Sigma^\infty_X Y)^{\boxtimes n}\right)_{h\Sigma_n}
\end{align*}
induced by the map~\eqref{eqn:configuration-jet-expansion}. \qed
\end{proposition}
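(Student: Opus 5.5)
The plan is to assemble the three identifications already established and check that they are compatible with the relevant maps. First, by \cref{lemma:map-on-differentials}, the map on $n$th differentials induced by~\eqref{eqn:main-natural-transformation} is obtained by taking $\Sigma_n$-homotopy orbits of the adjoint map~\eqref{eqn:differential-map-adjoint}. Its source is $\colim_{\cM(\cL)} \pi_!^\sh( q^*(\Sigma^\infty_X Y)^{\boxtimes n})$, and its target is the compactly supported section spectrum appearing in \cref{corollary:differential-sections}. Second, by \cref{lemma:basechange-rewriting}, the adjoint of~\eqref{eqn:sheaf-to-conf} identifies the source with $\Gamma_c(\Conf_n(X); q_!\bS^0 \otimes \bS^{n\cL} \otimes (\Sigma^\infty_X Y)^{\boxtimes n})$. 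This equivalence is $\Sigma_n$-equivariant because every construction involved (the maps $\eta$, $\can$, and the localisation and base change maps) is natural with respect to permuting the configuration. It therefore passes to homotopy orbits.

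It then remains to show that~\eqref{eqn:differential-map-adjoint} factors as the adjoint of~\eqref{eqn:sheaf-to-conf} followed by the map on $\Gamma_c(\Conf_n(X);-\otimes \bS^{n\cL}\otimes(\Sigma^\infty_XY)^{\boxtimes n})$ induced by~\eqref{eqn:configuration-jet-expansion}. I would first check this at the level of maps of bundles over $\cM(\cL)$, for each level $k$. Composing the point-set map defining~\eqref{eqn:sheaf-to-conf}, namely $(s,g)\mapsto \eta(s,c)\wedge\can(s,c)\wedge g(c)$, with the suspension of $(s,c)\mapsto (c,\Psi(s,-),\Psi(s,c))$ recovers the formula $(c,\Psi(s,-),\Psi(s,c))\wedge \can(s,c)\wedge g(c)$ that defines~\eqref{eqn:differential-map-adjoint}. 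The only discrepancy is that the first formula uses the fibrant replacement $\eta$ while the second uses the strict point. I would handle this by extending the map $\cZ^\mathrm{tub}_n(\cL)\sqcup\Conf_n(X)\to p_!(\mathrm{ev}_!S^0\wedge(N\sqcup S(J^1\cL))^{\ovwedge n})$ along the acyclic cofibration $\eta$. Such an extension exists because the target is an ex-fibration over $\Conf_n(X)$, and it is unique up to homotopy under $\cZ^\mathrm{tub}_n(\cL)\sqcup\Conf_n(X)$. The triangle then commutes strictly at the point-set level, and hence after passing to locally constant sheaves on $\cM(\cL)$.

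Finally, I would use naturality of the adjunction $\colim_{\cM(\cL)}\dashv\operatorname{cst}$. A map of sheaves of the form $\operatorname{cst}(\phi)\circ\alpha$ has adjoint $\phi\circ\alpha^\flat$. Applying this with $\alpha$ the map~\eqref{eqn:sheaf-to-conf} and $\phi$ the map induced by~\eqref{eqn:configuration-jet-expansion} gives the desired factorisation of~\eqref{eqn:differential-map-adjoint}. Taking $\Sigma_n$-homotopy orbits and invoking \cref{lemma:map-on-differentials} then concludes.

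I expect the main obstacle to be the compatibility of the point-set identification~\eqref{eqn:point-set-identification} of $p_!\mathrm{ev}_!$ with the homotopical one used in \cref{corollary:differential-sections}. The subtle point is that $\mathrm{ev}$ is only a fibre bundle over $\Conf_n(X)$, and $N\hookrightarrow S(J^1\cL)$ is not a fibration. Beyond that, the argument amounts to diagram bookkeeping.
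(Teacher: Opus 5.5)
Your proposal is correct and is essentially the paper's argument: the paper deduces the proposition directly from \cref{lemma:map-on-differentials} and \cref{lemma:basechange-rewriting}, leaving implicit the factorisation of \eqref{eqn:differential-map-adjoint} as the adjoint of \eqref{eqn:sheaf-to-conf} followed by the map induced by \eqref{eqn:configuration-jet-expansion}, which you make explicit via the lift along $\eta$ and the naturality of $\colim_{\cM(\cL)} \dashv \operatorname{cst}$. The compatibility you flag as the main obstacle is already covered by \cref{lemma:map-on-differentials}, so the only extra care is to choose the lift along $\eta$ $\Sigma_n$-equivariantly so that it passes to homotopy orbits; this is harmless because $\Sigma_n$ acts freely on $\Conf_n(X)$.
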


Our goal is now to show that the map between the differentials is highly connected. We begin by proving that the map~\eqref{eqn:configuration-jet-expansion} of parameterised spectra is fibrewise highly connected. Let
\[
    \cZ_n^{\cC^0}(\cL) = {\set{(c,f) \in \Conf_n(X) \times \Gamma(S(J^1\cL))}{\forall x \in c, \ f(x) \in S(\Omega^1_X \otimes \cL) \subset S(J^1\cL)}}
\]
and
\[
    \cZ^{\mathrm{tub}, \cC^0}_n(\cL) = {\set{(c,f,v) \in \Conf_n(X) \times \Gamma(S(J^1\cL)) \times N^n}{f(x_i) = v_i, \ i=1,\dotsc,n }}
\]
where $c = (x_1,\dotsc,x_n)$ and $v = (v_1,\dotsc,v_n)$.
As the tubular neighbourhood $N$ deformation retracts onto its zero section $S(\Omega^1_X \otimes \cL)$, we obtain homotopy equivalences of spaces in $\catS_{/\Conf_n(X)}$ fitting in the following commutative square:
\begin{equation}\label{eqn:retract-N-square}
\begin{tikzcd}[column sep=8em]
\cZ_n(\cL) \arrow[d, "\simeq"'] \arrow[r, "{(c,s) \mapsto (c, \Psi(s,-))}"]   & {\cZ_n^{\cC^0}(\cL)} \arrow[d, "\simeq"]  \\
\cZ^\mathrm{tub}_n(\cL) \arrow[r, "{(c,s) \mapsto (c, \Psi(s,-), \Psi(s,c))}"'] & {\cZ^{\mathrm{tub}, \cC^0}_n(\cL)}
\end{tikzcd}
\end{equation}
To understand the effect in homology of the top horizontal map, we shall need a result from previous work of the author with Das. In what follows, we say that a map of spaces $A \to B$ is homology $m$-connected if the induced map on homology $H_*(A;\bZ) \to H_*(B;\bZ)$ is surjective in degrees $* \leq m$ and an isomorphism in degrees $* < m$.
\begin{theorem}\label{theorem:hprinciple-with-vanishing}
Let $c \in \Conf_n(X)$. Suppose that $\cL$ is $d$-jet ample. Then the jet map
\[
    j^1 \colon \set{s \in \cM(\cL)}{\forall x \in c, \ s(x) = 0} \lra \set{f \in \Gamma(S(J^1\cL))}{\forall x \in c, \ f(x) \in S(\Omega^1_X \otimes \cL)}
\]
is homology $(\frac{d-3}{2}-n)$-connected.
\end{theorem}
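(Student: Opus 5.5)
This is a Vassiliev-type h-principle with prescribed vanishing at $n$ points. The plan is to run the argument behind \cref{theorem:old-hprinciple} from \cite{hprinciple}, or equivalently the one in \cite[Theorem~B]{withronno}, on the linear subspace of sections vanishing at the points of $c$. Write $c = \{x_1,\dotsc,x_n\}$ and
\[
    W_c = \set{s \in \Gammahol(\cL)}{s(x_i) = 0, \ i = 1,\dotsc,n}.
\]
This is a linear subspace of codimension $n$, because $d$-jet ampleness with $d \geq n-1$ makes evaluation at the $n$ points surjective. The source of $j^1$ is the complement in $W_c$ of the discriminant
\[
    \Sigma_c = \set{s \in W_c}{\exists x \in X, \ (s,\mathrm{d}s)(x) = (0,0)}.
\]
The target is the space of continuous sections $f$ of $J^1\cL - 0$ (rescaled to $S(J^1\cL)$) whose value at each $x_i$ lies in the $\Omega^1_X \otimes \cL$ summand, i.e.\ its first component vanishes there.

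First, on the algebraic side, I will apply Alexander duality in the vector space $W_c$. This identifies the reduced cohomology of $W_c - \Sigma_c$ with a shift of the Borel--Moore homology of $\Sigma_c$. To compute the latter, I will build Vassiliev's simplicial resolution of $\Sigma_c$ by configurations of singular points and filter it by the number $k$ of singular points. The $k$th stratum fibres over $\UConf_k(X)$ with fibre the affine space of sections in $W_c$ singular at $k$ given points. By $d$-jet ampleness this fibre has the expected codimension as long as the total number of jet conditions stays below $d+1$. There are $k$ conditions of order $2$, namely $\fm^2$ at each singular point, and $n$ conditions of order $1$ at the $x_i$. When singular points collide with some $x_i$, the conditions combine to $\fm^2$ at that $x_i$, and the jet ampleness condition still controls this. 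Hence the codimensions are as expected for $2k + n \leq d+1$, roughly $k \leq \frac{d+1-n}{2}$, which produces a stable range of the form $\frac{d-3}{2}-n$ after shifting.

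Second, on the continuous side, I will run the same resolution for the space of continuous sections of $J^1\cL$ with the prescribed vanishing of the first component at $c$. Here the fibres over every configuration have the expected codimension with no bound on $k$. I will then compare the two spectral sequences using the jet map. The comparison is an isomorphism on the $E_1$-terms for small $k$, since both are Borel--Moore homologies of the same bundles of vector spaces over configuration spaces, compatibly oriented. This yields the range after accounting for the Alexander duality shifts. As in \cite{hprinciple}, I will handle the infinite dimensionality of the continuous section space by finite dimensional approximations, for instance via holomorphic sections of $\cL^{\otimes m}$-twists, or by following \cite[\S~2]{hprinciple} verbatim.

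The main obstacle I expect is the stratum where singular points of $s$ coincide with the marked points $x_i$. There the conditions are not simply the $k$ order-$2$ conditions plus the $n$ order-$1$ conditions counted separately, and the resolution must be taken over configurations in $X$ relative to $c$. I would treat this by stratifying $\UConf_k(X)$ according to the intersection with $c$. On each stratum I will check that jet ampleness, with multiplicities $k_i \in \{1,2\}$ summing to at most $d+1$, gives surjectivity. The loss of $n$ in the range comes from exactly this bookkeeping. Homology surjectivity in the top degree is read off from the same spectral sequence comparison at the edge of the range.
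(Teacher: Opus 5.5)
Your route differs from the paper's and looks sound in outline. The paper does not rerun Vassiliev's method. It maps source and target to $\prod_{i=1}^n S(\Omega^1_X\otimes\cL)_{x_i}$ by recording the normalised derivative at the marked points: $s\mapsto(\mathrm{d}s(x_1),\dotsc,\mathrm{d}s(x_n))$ and $f\mapsto(f(x_1),\dotsc,f(x_n))$. On each fibre the whole $1$-jet at every $x_i$ is then prescribed (up to positive scaling on the source). So the fibrewise statement is the h-principle with prescribed jets \cite[Theorem~5.1]{withronno}. The global statement follows from \cite[Lemma~3.8]{withronno}, which promotes fibrewise homology connectivity to global connectivity for a map from a microfibration (the source, an open piece of a bundle) to a fibre bundle (the target).

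You instead leave the derivative free and work in the linear subspace $W_c$. There Alexander duality applies directly and no local-to-global lemma is needed. You also spend only $n$ conditions of order one, rather than the $n$ order-two conditions on the paper's fibres. Your bookkeeping should therefore give a range of order $\frac{d-n}{2}$, which beats $\frac{d-3}{2}-n$ and implies the statement.

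The price is redoing the whole machinery. You need the truncation and tail estimate once $2k+n>d+1$. You need the stratification of the $E_1$-terms by how many configuration points lie in $c$, where the fibres jump in rank on both sides. Above all you need the continuous side, which is where your sketch is thinnest:
\begin{itemize}
\item The condition $f(x_i)\in S(\Omega^1_X\otimes\cL)$ is a constraint at fixed points, not avoidance of a closed subset of the jet bundle, so \cite{hprinciple} does not apply verbatim.
\item Any finite-dimensional approximation must consist of continuous, non-holomorphic sections of $J^1\cL$ that interpolate relative to $c$.
\item Holomorphic sections of twists of $\cL$ do not directly serve, since their $1$-jets are sections of $J^1$ of the twisted bundle rather than of $J^1\cL$.
\end{itemize}
None of this looks like an obstruction, but it is precisely the work the paper avoids by quoting \cite[Theorem~5.1]{withronno}.
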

\begin{proof}
Write $c = (x_1,\dotsc,x_n)$. The codomain and domain of the jet map $j^1$ can be viewed as spaces over $\prod_{i = 1}^n S(\Omega^1_X \otimes \cL)_{x_i}$ via the maps
\begin{align*}
    \set{f \in \Gamma(S(J^1\cL))}{\forall x \in c, \ f(x) \in S(\Omega^1_X \otimes \cL)} &\lra \prod_{i = 1}^n S(\Omega^1_X \otimes \cL)_{x_i} \\
    f &\longmapsto (f(x_1),\dotsc,f(x_n))
\end{align*}
and
\begin{align*}
    \set{s \in \cM(\cL)}{\forall x \in c, \ s(x) = 0} &\lra \prod_{i = 1}^n S(\Omega^1_X \otimes \cL)_{x_i} \\
    s &\longmapsto (\mathrm{d}s(x_1),\dotsc,\mathrm{d}s(x_n)).
\end{align*}
The restriction of the jet map to any fibre is homology $(\frac{d-3}{2}-n)$-connected by \cite[Theorem~5.1]{withronno}. We can then argue as in \cite[\S~3.2, Proof of Theorem~3.1]{withronno}: the first map above is a fibre bundle, and the second is a microfibration obtained as the restriction of a fibre bundle to an open subset of its total space. Hence \cite[Lemma~3.8]{withronno} applies and the theorem follows.
\end{proof}

\begin{corollary}\label{corollary:connectivity-fibrewise}
Let $c \in \Conf_n(X)$. Suppose that $\cL$ is $d$-jet ample. Then the restriction of the map~\eqref{eqn:configuration-jet-expansion} to the fibre above $c$
\[
    \big(q_!\bS^0\big)_c \lra \big(p_!\big(\mathrm{ev}_!\bS^0 \otimes (\Sigma^\infty_{S(J^1\cL),+}N)^{\boxtimes n}\big)\big)_c
\]
is $(\frac{d-3}{2}-n)$-connected.
\end{corollary}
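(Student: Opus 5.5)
We identify both fibres over $c = (x_1,\dotsc,x_n)$ with suspension spectra of spaces appearing in the square~\eqref{eqn:retract-N-square}, and then invoke \cref{theorem:hprinciple-with-vanishing}. Since $q_!\bS^0$ is the fibrewise suspension spectrum of the ex-fibrant replacement $q_!S^0$ of $\cZ^\mathrm{tub}_n(\cL) \sqcup \Conf_n(X)$, its fibre at $c$ is $\Sigma^\infty_+$ of the homotopy fibre of $q \colon \cZ^\mathrm{tub}_n(\cL) \to \Conf_n(X)$ at $c$. Likewise, by the point-set identification~\eqref{eqn:point-set-identification}, and because $\mathrm{ev}$ is a fibre bundle over $\Conf_n(X)$, the fibre at $c$ of the target is $\Sigma^\infty_+$ of the fibre at $c$ of $\cZ^{\mathrm{tub},\cC^0}_n(\cL) \to \Conf_n(X)$. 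That fibre is simply $\set{(f,v)}{f(x_i) = v_i \in N}$, i.e.\ the space of $f \in \Gamma(S(J^1\cL))$ with $f(x_i) \in N$. By construction, the map~\eqref{eqn:configuration-jet-expansion} on fibres is $\Sigma^\infty_+$ of the map on fibres induced by the bottom row of~\eqref{eqn:retract-N-square}.

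Next, we pass to the top row of~\eqref{eqn:retract-N-square}. Its vertical maps are homotopy equivalences over $\Conf_n(X)$, so they induce equivalences on homotopy fibres at $c$. The fibre of $\cZ_n(\cL) \to \Conf_n(X)$ at $c$ is $\set{s \in \cM(\cL)}{s(x_i)=0}$, and the fibre of $\cZ^{\cC^0}_n(\cL)$ is $\set{f}{f(x_i) \in S(\Omega^1_X\otimes\cL)}$. Here one needs these maps to $\Conf_n(X)$ to be fibrations, or at least quasifibrations, so that strict fibres compute homotopy fibres. For $\cZ^{\cC^0}_n(\cL)$ this holds because evaluation over $\Conf_n(X)$ is a fibre bundle. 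For $\cZ_n(\cL)$ we use local triviality: translating configurations by local diffeomorphisms isotopic to the identity does not preserve holomorphicity. So we would instead argue with the fattened version $\cZ^\mathrm{tub}_n(\cL)$, which is open in $\cM(\cL)\times\Conf_n(X)$ and hence a microfibration, together with the Weiss-type lemma \cite[Lemma~3.8]{withronno} already used in \cref{theorem:hprinciple-with-vanishing}. Alternatively, we note that the homotopy fibre agrees with the strict fibre by the same argument as in \cite[\S~3.2]{withronno}. I expect this identification of homotopy fibres with strict fibres to be the main technical obstacle.

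Granting it, the fibre map is identified with $\Sigma^\infty_+$ of the jet map $j^1$ of \cref{theorem:hprinciple-with-vanishing}, where the rescaling by $\varepsilon(s)$ in $\Psi$ is homotopic to $j^1$ because the space of rescalings is contractible. That theorem says $j^1$ is homology $(\frac{d-3}{2}-n)$-connected. By the Hurewicz/Whitehead theorem for spectra, a map of connective spectra inducing an isomorphism on $H_*(-;\bZ)$ below degree $m$ and a surjection in degree $m$ has cofibre with vanishing integral homology in degrees $\le m$. The cofibre is connective and bounded below, so it is $m$-connected, and hence the map is $m$-connected. This gives the claimed connectivity of the fibre map.
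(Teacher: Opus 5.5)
Your reduction is fine: you identify both fibres as $\Sigma^\infty_+$ of fibres over $c$, pass to the top row of \eqref{eqn:retract-N-square}, use the rescaling homotopy, and close with the Hurewicz argument. But the step you grant is exactly what this corollary adds beyond \cref{theorem:hprinciple-with-vanishing}, and neither justification you offer works.

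The fibre of $\widetilde q_!\bS^0$ at $c$ is $\Sigma^\infty_+$ of the \emph{homotopy} fibre of $\widetilde q\colon\cZ_n(\cL)\to\Conf_n(X)$. The theorem, however, concerns the strict fibre $\set{s\in\cM(\cL)}{s(x_i)=0}$. Since $\widetilde q$ is only a microfibration, there is no reason for the two to agree. Already the microfibration $\bR^2-\{0\}\to\bR$ has strict fibres of two different homotopy types, neither of which is its homotopy fibre $\simeq S^1$.

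The argument of \cite[\S~3.2]{withronno} does not give such an identification either. It deduces connectivity of a map of total spaces from connectivity on strict fibres, which is a different statement. Your fallback also fails: $\cZ^{\mathrm{tub}}_n(\cL)$ is not open in $\cM(\cL)\times\Conf_n(X)$, since it is cut out by closed tubes. Even if it were, being a microfibration would not identify strict and homotopy fibres.

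The missing idea is to go up to total spaces and come back down to homotopy fibres. For this you need connectivity on homotopy groups, not just homology, so that the five lemma applies. The paper proceeds in five steps.
\begin{enumerate}
\item It applies unreduced fibrewise double suspension over $\Conf_n(X)$, which makes all strict fibres simply connected.
\item The homological estimate of \cref{theorem:hprinciple-with-vanishing} then upgrades to a genuinely $(\frac{d-3}{2}-n+2)$-connected map $\Sigma^2\set{s}{s(x_i)=0}\to\Sigma^2\set{f}{f(x_i)\in S(\Omega^1_X\otimes\cL)}$.
\item \cite[Lemma~3.8]{withronno}, applied to the map from the microfibration $\Sigma^2_{\mathrm{fib}}\cZ_n(\cL)$ to the fibre bundle $\Sigma^2_{\mathrm{fib}}\cZ^{\cC^0}_n(\cL)$ over $\Conf_n(X)$, shows that the map of total spaces is $(\frac{d-3}{2}-n+2)$-connected.
\item The five lemma on the long exact sequences of homotopy groups over $\Conf_n(X)$ transfers this connectivity to $\fib_c(\Sigma^2_{\mathrm{fib}}\widetilde q)\to\Sigma^2\set{f}{f(x_i)\in S(\Omega^1_X\otimes\cL)}$.
\item Mather's second cube theorem identifies $\fib_c(\Sigma^2_{\mathrm{fib}}\widetilde q)\simeq\Sigma^2\fib_c(\widetilde q)$, and passing to suspension spectra gives the $(\frac{d-3}{2}-n)$-connectivity.
\end{enumerate}
The paper never proves that the homotopy fibre is equivalent to the strict fibre. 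It only proves the connectivity estimate for the homotopy fibre, which is all the corollary needs and which your proposal leaves unproved.
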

\begin{proof}
Taking fibrewise suspensions over $\Conf_n(X)$ in the commutative square~\eqref{eqn:retract-N-square} yields a commutative square of parameterised spectra:
\[
\begin{tikzcd}
\widetilde{q}_!\bS^0 \arrow[r] \arrow[d, "\simeq"'] & {p_!\big(\mathrm{ev}_!\bS^0 \otimes (\Sigma^\infty_{S(J^1\cL),+}S(\Omega^1_X \otimes \cL))^{\boxtimes n}\big)} \arrow[d, "\simeq"] \\
q_!\bS^0 \arrow[r]   & {p_!\big(\mathrm{ev}_!\bS^0 \otimes (\Sigma^\infty_{S(J^1\cL),+}N)^{\boxtimes n}\big)}                                            
\end{tikzcd}
\]
where $\widetilde{q} \colon \cZ_n(\cL) \to \Conf_n(X)$ is the projection. It thus suffices to show that the map
\[
    \big(\widetilde{q}_!\bS^0\big)_c \lra \big(p_!\big(\mathrm{ev}_!\bS^0 \otimes (\Sigma^\infty_{S(J^1\cL),+}S(\Omega^1_X \otimes \cL))^{\boxtimes n}\big)\big)_c
\]
induces an isomorphism on homotopy groups in the claimed range. We may identify this map with
\begin{equation}\label{eqn:map-on-hofib-param-spectra}
    \Sigma^\infty_+ \fib_c\left(\cZ_n(\cL) \to \Conf_n(X)\right) \overset{j^1}{\lra} \Sigma^\infty_+\set{f \in \Gamma(S(J^1\cL))}{\forall x \in c, \ f(x) \in S(\Omega^1_X \otimes \cL)},
\end{equation}
where $\fib_c(-)$ denotes the homotopy fibre at $c$. This is nearly the map of \cref{theorem:hprinciple-with-vanishing}, except for the fact that the domain is the homotopy fibre rather than the strict fibre. However, we can argue as in the proof of that theorem to show that our connectivity estimate still holds. Indeed, we have a map from a microfibration to a fibre bundle:
\[
\begin{tikzcd}[column sep=0em]
{\set{s \in \cM(\cL)}{\forall x \in c, \ s(x) = 0}} \arrow[r] \arrow[d] & \fib_c(\widetilde{q}) \arrow[r] \arrow[dl] & {\set{f \in \Gamma(S(J^1\cL))}{\forall x \in c, \ f(x) \in S(\Omega^1_X \otimes \cL)}} \arrow[d] \\
\cZ_n(\cL) \arrow[rd, "\widetilde{q}"'] \arrow[rr] & & \cZ^{\cC^0}_n(\cL) \arrow[ld] \\
& \Conf_n(X) & 
\end{tikzcd}
\]
After (unreduced) suspending twice, the map
\[
    \Sigma^2\set{s \in \cM(\cL)}{\forall x \in c, \ s(x) = 0} \lra \Sigma^2\set{f \in \Gamma(S(J^1\cL))}{\forall x \in c, \ f(x) \in S(\Omega^1_X \otimes \cL)}
\]
is $(\frac{d-3}{2}-n+2)$-connected by \cref{theorem:hprinciple-with-vanishing}. (As in \cite[\S~3.1]{withronno} the suspensions of the spaces are simply-connected, hence we can upgrade the homological connectivity to a connectivity on homotopy groups.) By \cite[Lemma~3.8]{withronno}, we deduce that
\[
    \Sigma^2_\mathrm{fib}\cZ_n(\cL) \lra \Sigma^2_\mathrm{fib} \cZ_n^{\cC^0}(\cL)
\]
is $(\frac{d-3}{2}-n+2)$-connected, where $\Sigma^2_\mathrm{fib}$ means the fibrewise unreduced suspension. By the 5-lemma, we deduce that
\[
    \Sigma^2\fib_c(\widetilde{q}) \simeq \fib_c(\Sigma^2_\mathrm{fib}\widetilde{q}) \lra \Sigma^2\set{f \in \Gamma(S(J^1\cL))}{\forall x \in c, \ f(x) \in S(\Omega^1_X \otimes \cL)}
\]
is also $(\frac{d-3}{2}-n+2)$-connected. (The first equivalence follows by a direct computation with Mather's second cube theorem \cite[Theorem~25]{Mather1976}.) Hence the map~\eqref{eqn:map-on-hofib-param-spectra} is $(\frac{d-3}{2}-n)$-connected.
\end{proof}

\begin{proposition}\label{proposition:connectivity-map-differentials}
Let $n \geq 1$. Suppose that $\cL$ is $d$-jet ample. Then the map on the $n$th Goodwillie differentials
\[
    D_n\left(\Sigma^\infty_+ \cM^\mathrm{tub}(\cL;Y)\right) \lra D_n\left(\Sigma^\infty_+\Gamma\left(X; (N \sqcup S(J^1\cL)) \wedge_{S(J^1\cL)} p^*(S^\cL \wedge_X Y)\right)\right)
\]
is $(\frac{d-3}{2} + n(\conn(Y/X) + 1 - \dim_\bR(X)))$-connected.
\end{proposition}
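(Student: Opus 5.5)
By \cref{proposition:final-identification-map-differentials}, it suffices to bound the connectivity of the map
\[
    \Gamma_c\left(\Conf_n(X); \cA \otimes \bS^{n\cL} \otimes (\Sigma^\infty_X Y)^{\boxtimes n}\right)_{h\Sigma_n} \lra \Gamma_c\left(\Conf_n(X); \cB \otimes \bS^{n\cL} \otimes (\Sigma^\infty_X Y)^{\boxtimes n}\right)_{h\Sigma_n}.
\]
Here we abbreviate $\cA = q_!\bS^0$ and $\cB = p_!\big(\mathrm{ev}_!\bS^0 \otimes (\Sigma^\infty_{S(J^1\cL),+}N)^{\boxtimes n}\big)$, and the map is induced by~\eqref{eqn:configuration-jet-expansion}. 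Homotopy orbits preserve connectivity of maps of spectra, since they are colimits. So we drop the $h\Sigma_n$ and study the map of compactly supported section spectra. Let $\cC = \cofib(\cA \to \cB)$, a parameterised spectrum over $\Conf_n(X)$. By \cref{corollary:connectivity-fibrewise}, each fibre $\cC_c$ is $(\frac{d-3}{2}-n)$-connected.

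The fibre $(\Sigma^\infty_X Y)_x$ is the suspension spectrum of the fibre of $Y \to X$, which is $(\conn(Y/X)-1)$-connected. Hence $(\Sigma^\infty_X Y)^{\boxtimes n}$ is fibrewise $(n\conn(Y/X)-1)$-connected. The factor $\bS^{n\cL}$ is fibrewise a sphere of dimension $2n$, which raises connectivity by $2n$. Smashing connectivities add in the usual way, so the coefficient $\cC \otimes \bS^{n\cL} \otimes (\Sigma^\infty_X Y)^{\boxtimes n}$ is fibrewise roughly $(\frac{d-3}{2} - n + 2n + n\conn(Y/X))$-connected. I will track the $\pm 1$ conventions carefully when writing this out.

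Next, pass to compactly supported sections over $\Conf_n(X)$, a manifold of real dimension $n\dim_\bR(X)$. The standard estimate is that $\Gamma_c(M;\cF)$ is at least $(\conn(\cF) - \dim M)$-connected when $\cF$ is fibrewise $\conn(\cF)$-connected. To justify this, I would use Atiyah duality as in \cref{corollary:homogeneity}: write $\Gamma_c(M;\cF) \simeq \colim_M(\cF \otimes \bS^{-TM})$, where twisting by $\bS^{-TM}$ lowers connectivity by $\dim M$ and colimits preserve connectivity. Alternatively, one can induct on a cell structure relative to the boundary. Combining, the cofibre of the map on differentials is about
\[
\tfrac{d-3}{2} + n + n\conn(Y/X) - n\dim_\bR(X) = \tfrac{d-3}{2} + n\big(\conn(Y/X) + 1 - \dim_\bR(X)\big)
\]
connected, which is the claim. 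The term $+n$ comes from the $2n$ of $\bS^{n\cL}$ minus the $n$ lost in \cref{corollary:connectivity-fibrewise}.

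The main obstacle is bookkeeping rather than conceptual. One must check that the connectivity conventions for maps versus fibres versus cofibres, and for smash products of fibrewise connective spectra, combine to give exactly the stated bound with no off-by-one loss. One must also confirm that the $\Conf_n(X)$-parameterised coefficients are fibrewise bounded below, so that the duality estimate applies; this holds because $X$ is compact, so $\Conf_n(X)$ is a finite-type manifold.
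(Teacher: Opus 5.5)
Your proposal is correct and takes essentially the same approach as the paper: use Poincar\'e--Atiyah duality to rewrite $\Gamma_c(\Conf_n(X);-)$ as a colimit twisted by $\bS^{-T\Conf_n(X)}$, then add the fibrewise connectivities of the cofibre of \eqref{eqn:configuration-jet-expansion} (from \cref{corollary:connectivity-fibrewise}), of $\bS^{n\cL}$, of $(\Sigma^\infty_X Y)^{\boxtimes n}$ and of $\bS^{-T\Conf_n(X)}$, noting that homotopy orbits and colimits do not lower connectivity. Your conventions (the cofibre of a $k$-connected map of spectra is $k$-connected, and an $(i-1)$-connected spectrum tensored with a $(j-1)$-connected one is $(i+j-1)$-connected) reproduce exactly the paper's count $\frac{d-3}{2}-n+2n+n\conn(Y/X)-n\dim_\bR(X)$.
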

\begin{proof}
Poincaré duality for the manifold $\Conf_n(X)$ gives an equivalence
\[
    \Gamma_c(\Conf_n(X); -) \simeq \colim_{\Conf_n(X)}\left(- \otimes \bS^{-T\Conf_n(X)}\right)
\]
for any parameterised spectrum $(-)$. Combined with \cref{proposition:final-identification-map-differentials}, we see that the map on the $n$th Goodwillie differentials has the form
\begin{align*}
    &\colim_{\UConf_n(X)}\left(q_!\bS^0 \otimes \bS^{n\cL} \otimes (\Sigma^\infty_X Y)^{\boxtimes n} \otimes \bS^{-T\Conf_n(X)}\right) \\
    &\lra \colim_{\UConf_n(X)}\left(p_!\big(\mathrm{ev}_!\bS^0 \otimes (\Sigma^\infty_{S(J^1\cL),+}N)^{\boxtimes n}\big) \otimes \bS^{n\cL} \otimes (\Sigma^\infty_X Y)^{\boxtimes n} \otimes \bS^{-T\Conf_n(X)}\right)
\end{align*}
and is induced by the map $q_!\bS^0 \to p_!\big(\mathrm{ev}_!\bS^0 \otimes (\Sigma^\infty_{S(J^1\cL),+}N)^{\boxtimes n}\big)$ given in \cref{eqn:configuration-jet-expansion}. This map is fibrewise $(\frac{d-3}{2}-n)$-connected by \cref{corollary:connectivity-fibrewise}. Furthermore, $\bS^{n\cL}$ is fibrewise $(2n-1)$-connected, $(\Sigma^\infty_X Y)^{\boxtimes n}$ is fibrewise $(n \conn(Y/X)-1)$-connected, and $\bS^{-T\Conf_n(X)}$ is fibrewise $(-n\dim_\bR(X)-1)$-connected. Finally, recall that taking homotopy orbits does not lower the connectivity, and that the tensor product of an $(i-1)$-connected spectrum with a $(j-1)$-connected spectrum is $(i+j-1)$-connected (both statements follow because connective spectra are stable under colimits \cite[Proposition~1.4.3.4]{HA}). By summing up, we thus obtain that the map between the $n$th differentials has connectivity:
\[
    \frac{d-3}{2}-n + 2n + n\conn(Y/X) -n\dim_\bR(X) = \frac{d-3}{2} + n(\conn(Y/X) + 1 - \dim_\bR(X)). \qedhere
\]
\end{proof}

\subsection{Proof of the main theorem}

We are now ready to give the proof of \cref{theorem:main}. The underlying idea is very simple: we write down the Goodwillie towers and examine the connectivity of the comparison map on polynomial approximations. This is done inductively using the connectivity established between differentials.

\begin{proof}[Proof of \cref{theorem:main}]
To lighten the notation, let us write $\Gamma(\cL;Y)$ for the functor 
\[
    Y \longmapsto \Gamma\left(X; (N \sqcup S(J^1\cL)) \wedge_{S(J^1\cL)} p^*(S^\cL \wedge_X Y)\right).
\]
The natural transformation $\Sigma^\infty_+ \cM^\mathrm{tub}(\cL;Y) \to \Sigma^\infty_+ \Gamma(\cL;Y)$ induces a map of Goodwillie towers:
\[
    P_\bullet\left(\Sigma^\infty_+ \cM^\mathrm{tub}(\cL;Y)\right) \lra P_\bullet\left(\Sigma^\infty_+ \Gamma(\cL;Y)\right)    
\]
At the bottom of the tower, the map
\[
    \Sigma^\infty_+j^1 \colon \Sigma^\infty_+\cM(\cL) \simeq P_0\left(\Sigma^\infty_+ \cM^\mathrm{tub}(\cL;Y)\right) \lra P_0\left(\Sigma^\infty_+ \Gamma(\cL;Y)\right) \simeq \Sigma^\infty_+ \Gamma(S(J^1\cL))
\]
is $\frac{d-3}{2}$-connected by \cite[Corollary~8.1]{hprinciple}. 
Recall that we are assuming that 
\[
    \conn(Y/X) \geq \dim_\bR(X)-1,
\]
hence for $n \geq 1$ the comparison map on differentials
\[
    D_n\left(\Sigma^\infty_+ \cM^\mathrm{tub}(\cL;Y)\right)\lra D_n\left(\Sigma^\infty_+ \Gamma(\cL;Y)\right)
\]
is at least $\frac{d-3}{2}$-connected by \cref{proposition:connectivity-map-differentials}. Using inductively the 5-lemma, it follows that each map
\[
    P_n\left(\Sigma^\infty_+ \cM^\mathrm{tub}(\cL;Y)\right) \lra P_n\left(\Sigma^\infty_+ \Gamma(\cL;Y)\right)
\]
is also $\frac{d-3}{2}$-connected. Now $\Sigma^\infty_+\cM^\mathrm{tub}(\cL;Y) \simeq \colim_{s \in \cM(\cL)} \Sigma^\infty_+\Gamma(Y|_{\tub V(s)})$ is the colimit of $(\dim_\bR(X)-2)$-analytic functors, hence is itself $(\dim_\bR(X)-2)$-analytic. Therefore the natural map to the limit of the tower
\[
    \Sigma^\infty_+\cM^\mathrm{tub}(\cL;Y) \lra \lim_{n \to \infty} P_n\left(\Sigma^\infty_+ \cM^\mathrm{tub}(\cL;Y)\right)
\]
is an equivalence when $\conn(Y/X) > \dim_\bR(X) - 2$. On the other hand,
\[
    \Sigma^\infty_+\Gamma(X;-) \colon \catS_{S(J^1\cL)//S(J^1\cL)} \to \catSp
\]
is $\dim_\bR(X)$-analytic (this follows from the argument given in the proof of \cref{proposition:order-n-agreement}). Now, if $Y \to X$ is $k$-connected then $(N \sqcup S(J^1\cL)) \wedge_{S(J^1\cL)} p^*(S^\cL \wedge_X Y) \to S(J^1\cL)$ is $(k+2)$-connected. Hence $\Sigma^\infty_+ \Gamma(\cL;Y)$ is $(\dim_\bR(X)-2)$-analytic, and therefore
\[
    \Sigma^\infty_+ \Gamma(\cL;Y) \lra \lim_{n \to \infty} P_n\left(\Sigma^\infty_+ \Gamma(\cL;Y)\right)
\]
is an equivalence when $\conn(Y/X) > \dim_\bR(X) - 2$.
\end{proof}

\section{An equivalence of fibrations}\label{section:equivalence}

In this section, we show that the fibration
\[
    (N \sqcup S(J^1\cL)) \wedge_{S(J^1\cL)} p^*(S^\cL \wedge_X Y) \lra X
\]
is equivalent to the fibrewise join
\[
    S(\cL) *_X (S(\Omega^1_X \otimes \cL) \times_X Y) \lra X.
\]
It will be useful to consider a third fibration, as a middle ground between the two above. Recall from \cref{subsection:scanning} that $N$ is a closed tubular neighbourhood of $S(\Omega^1_X \otimes \cL) \subset S(J^1\cL)$, that $\partial N = N - \mathring{N}$ is the associated sphere bundle, and that $\rho \colon N \to X$ is the restriction of the projection $p \colon S(J^1\cL) \to X$ to $N$. We first define $\frac{\rho^*Y}{\sim}$ as the pushout
\begin{equation*}
\begin{tikzcd}
\rho|_{\partial N}^*Y \arrow[r, hook] \arrow[d] & \rho^*Y \arrow[d]    \\
\partial N \arrow[r]                      & |[label={[label distance=-15mm]-45:\ulcorner}]|\frac{\rho^*Y}{\sim}
\end{tikzcd}
\end{equation*}
Assuming, as usual, that the fibration $Y \to X$ has a section $u \colon X \to Y$, we let $\rho^*u \colon N \to \frac{\rho^*Y}{\sim}$ be the pulled back section composed with the quotient map. We then define $S(J^1\cL) \cup_N \frac{\rho^*Y}{\sim}$ as the pushout
\begin{equation*}
\begin{tikzcd}
N \arrow[r, "\rho^*u"] \arrow[d, hook] & \frac{\rho^*Y}{\sim} \arrow[d] \\
S(J^1\cL) \arrow[r]                            & |[label={[label distance=-15mm]-45:\ulcorner}]|S(J^1\cL) \cup_N \frac{\rho^*Y}{\sim} 
\end{tikzcd}
\end{equation*}
\begin{lemma}\label{lemma:ansazt-nice-formula}
There is a weak homotopy equivalence of retractive spaces over $S(J^1\cL)$:
\[
    S(J^1\cL) \cup_N \frac{\rho^*Y}{\sim} \overset{\simeq}{\lra} (N \sqcup S(J^1\cL)) \wedge_{S(J^1\cL)} p^*(S^\cL \wedge_X Y).
\]
\end{lemma}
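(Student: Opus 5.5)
Let $T := (N \sqcup S(J^1\cL)) \wedge_{S(J^1\cL)} p^*(S^\cL \wedge_X Y)$ denote the target, which is a retractive space over $S(J^1\cL)$. The plan is to write down an explicit map of retractive spaces over $S(J^1\cL)$ from the left-hand side to $T$ using the canonical section $N \to \rho^*S^\cL$ of \cref{subsection:scanning}, and then check fibrewise that it is a weak equivalence. We first unpack $T$ fibrewise. Over a point $w \in S(J^1\cL) - N$, the fibre of $N \sqcup S(J^1\cL)$ is the single basepoint, so the fibre of $T$ is a point. Over $w \in N$, the fibre is $S^0 \wedge (S^{\cL_x} \wedge Y_x) = S^{\cL_x} \wedge Y_x$, where $x = p(w)$.

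The comparison map is defined by the universal properties of the two pushouts. On $S(J^1\cL)$ we send every point to the fibrewise basepoint of $T$. On $\rho^*Y$ we send a pair $(w,y)$, with $w \in N$ and $y \in Y_{\rho(w)}$, to $w \wedge \can(w) \wedge y$, where $\can(w) \in S^{\cL_{\rho(w)}}$ is the section built from the diagonal of $\mathring N$, extended by $\infty$ on $\partial N$. We then verify compatibility with the two gluings.
\begin{itemize}
\item On $\rho|_{\partial N}^*Y$, $\can(w) = \infty$, so the image is the basepoint. The map therefore factors through $\frac{\rho^*Y}{\sim}$.
\item On the image of $\rho^*u$, $y = u(x)$ is the basepoint of $Y_x$, so the image is again the basepoint. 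The map therefore agrees with the one defined on $S(J^1\cL)$ and descends to the pushout $S(J^1\cL) \cup_N \frac{\rho^*Y}{\sim}$.
\end{itemize}
Continuity follows because we work in compactly generated spaces and all maps are assembled from continuous pieces.

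The comparison is not fibrewise over $S(J^1\cL)$, because $\can$ varies along the normal direction of $N$. We therefore compare the two retractive spaces globally, after pushing both down to a common base, or equivalently after collapsing the sections. Our first approach is to use the identification $\mathring N \cong \nu^*((p^*\cL)|_{S(\Omega^1_X\otimes\cL)})$ and work over $Z := S(\Omega^1_X \otimes \cL)$. Both sides restricted over $N$ are then built from the disc bundle $N \to Z$. On the target side this is $N \times_X (S^\cL \wedge_X Y)$ relative to $\partial N$. On the source side this is $\rho^*Y$ modulo $\partial N$ and the section. The map collapses a disc fibre $D(\cL_x)$ with boundary onto $S^{\cL_x}$ via $\can$, which on each fibre is the standard homeomorphism $D/\partial D \cong S^{\cL_x}$ (the Thom collapse).

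The key reduction is that both constructions are homotopy pushouts (cofibrations because $\partial N \subset N$ and $N \subset S(J^1\cL)$ are NDR pairs and $Y$ is well-sectioned). Hence, using the gluing lemma, it suffices to show that the induced map on quotients $\rho^*Y/(\rho|_{\partial N}^*Y \cup \rho^*u(N))$ versus $T/S(J^1\cL)$ is an equivalence relative to the retraction data. Fibrewise over $Z$, this is the map $D(\cL_x)\times Y_x$ modulo $\partial D \times Y_x \cup D \times \{u\}$ mapping to $S^{\cL_x}\wedge Y_x$, which is a homeomorphism. We also need to record that the source side is well defined over $S(J^1\cL)$ via the projection $\rho^*Y \to N$.

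The main obstacle is exactly this mismatch of base-points. The natural map is not a map over $S(J^1\cL)$ in the naive sense: the source lives over $N$ through $w$, while the target records both $w$ and the vector $\can(w)$. We therefore expect to have to show that the projection of the target to $S(J^1\cL)$ forgets $\can(w)$, which makes the map over $S(J^1\cL)$. The fibrewise comparison then becomes: for $w \in \mathring N$, the source fibre is $Y_x$ and the target fibre is $S^{\cL_x}\wedge Y_x$, which are not equivalent. This forces either a correction of the formula or the conclusion that the statement only makes sense after a homotopy. Our first attempt at a correction is to replace $T$ by its fibrant replacement over $S(J^1\cL)$ and observe that the fibre of $N\sqcup S(J^1\cL) \to S(J^1\cL)$ should be understood homotopically. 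If that fails, we will instead prove the equivalence after applying $(-)/S(J^1\cL)$ relative to $X$, which is all that is needed for the section spaces used in the main theorem.
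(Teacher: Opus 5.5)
Your comparison map is the same as the paper's, but the proposal never shows that it is an equivalence and ends by doubting the statement. The gap is that you test the map on \emph{strict} fibres.

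First, the map is already a map over $E=S(J^1\cL)$. A point $(w,y)\in\rho^*Y$ lies over $w$, and so does $w\wedge\can(w)\wedge y$, because $\can(w)$ is a point of $(p^*S^\cL)_w=S^{\cL_{p(w)}}$. That $\can$ varies in the normal direction is irrelevant.

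More importantly, neither $E\cup_N\frac{\rho^*Y}{\sim}$ nor $(N\sqcup E)\wedge_E p^*(S^\cL\wedge_XY)$ is a fibration over $E$, since the fibres jump across $\partial N$. So the discrepancy between the strict fibres $Y_x$ and $S^{\cL_x}\wedge Y_x$ over $\mathring N$ carries no information. An equivalence in $\catS_{E//E}$ is detected on homotopy fibres (equivalently, on total spaces).

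Your reduction over $Z$ does not repair this. The target is not collapsed over $\partial N$: its fibres there are $S^{\cL_x}\wedge Y_x$. By also killing $D\times\{u\}$ you are only comparing quotients by the section, which is your fallback. An equivalence on $(-)/S(J^1\cL)$ does not by itself give an equivalence of retractive spaces. Moreover, the main theorem uses the section space of the uncollapsed fibration.

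What is missing is the homotopy-fibre computation, which is the core of the paper's proof.
\begin{enumerate}
\item Fix $e\in E$ over $x\in X$, and let $F=\fib_e(N\to E)$ with $h\colon F\to N$.
\item Mather's second cube theorem, applied to the pushout defining $\frac{\rho^*Y}{\sim}$, gives $\fib_e(\frac{\rho^*Y}{\sim}\to E)\simeq\fib_e(\partial N\to E)*_F\fib_e(\rho^*Y\to E)$.
\item Since $\partial N\hookrightarrow N$ is the sphere bundle of $\rho^*\cL$, and $\rho\circ h$ is null-homotopic, both fibrations over $F$ are trivial. So this homotopy fibre is $F\times(S(\cL_x)*Y_x)$.
\item Applying the cube theorem to the second pushout cones off $F$. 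The source therefore has homotopy fibre $F_+\wedge(S(\cL_x)*Y_x)\simeq F_+\wedge S^{\cL_x}\wedge Y_x$.
\item This is the homotopy fibre of the target, because $N\sqcup E\to E$ has homotopy fibre $F\sqcup *$.
\end{enumerate}
The collapse over $\partial N$ is exactly what turns $Y_x$ into the join with the circle $S(\cL_x)$, i.e.\ into $\Sigma^2Y_x$. This is the correct form of your Thom-collapse intuition.

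Your gluing-lemma instinct can also be salvaged, without collapsing the section. The strict target equals $E\cup_N\rho^*(S^\cL\wedge_XY)$, glued along the basepoint section. So it suffices to check that $\frac{\rho^*Y}{\sim}\to\rho^*(S^\cL\wedge_XY)$ is an equivalence over $N$. There the map on homotopy fibres is $S(\cL_x)*Y_x\to S^{\cL_x}\wedge Y_x$.
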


\begin{proof}
We will first construct a candidate map, and then verify that it is an equivalence fibrewise. For clarity in our formulas we write $E = S(J^1\cL)$ for the duration of this proof. Recall from \cref{subsection:scanning} that we have a natural section $\mathrm{vec} \colon N \to \rho^*S^\cL$. It was obtained by identifying the open tubular neighbourhood $\mathring{N}$ with $(p^*\cL)|_{S(\Omega^1_X \otimes \cL)}$, and the section informally records the normal vector (see \cref{figure:tubular-vector}).

We will now construct a map
\[
    S(J^1\cL) \cup_N \frac{\rho^*Y}{\sim} \lra (N \sqcup S(J^1\cL)) \wedge_{S(J^1\cL)} p^*(S^\cL \wedge_X Y).
\]
Let $\widetilde{u} \colon X \to S^\cL \wedge_X Y$ denote the basepoint section. By the universal property of pushouts, it suffices to construct maps on both $\frac{\rho^*Y}{\sim}$ and $S(J^1\cL)$ that agree along $N$. We write $\mathrm{incl}$ and $\mathrm{proj}$ to denote obvious inclusions and projections. The maps
\begin{align*}
    \mathrm{incl} \circ \mathrm{proj} \colon&  \rho^*Y \to N \hookrightarrow N \sqcup E \\
    \mathrm{incl} \circ (\mathrm{vec} \wedge \mathrm{id}) \colon& \rho^*Y \to \rho^*S^\cL \wedge_N \rho^*Y \hookrightarrow p^*(S^\cL \wedge_X Y)
\end{align*}
assemble into a map
\[
    \rho^*Y \lra (N \sqcup E) \wedge_E p^*(S^\cL \wedge_X Y)
\]
into the smash product. Similarly,
\begin{align*}
    \mathrm{incl} \colon& \partial N \hookrightarrow N \hookrightarrow N \sqcup E \\
    p^*\widetilde{u} \circ \mathrm{incl} \colon& \partial N \hookrightarrow E \to p^*(S^\cL \wedge_X Y)
\end{align*}
give a map
\[
    \partial N \lra (N \sqcup E) \wedge_E p^*(S^\cL \wedge_X Y).
\]
One readily checks that these maps agree on $\rho|_{\partial N}^*Y$ (they map to the basepoint), hence yield a map from the pushout
\begin{equation}\label{eqn:map-from-pushout}
    \frac{\rho^*Y}{\sim} \lra (N \sqcup E) \wedge_E p^*(S^\cL \wedge_X Y).
\end{equation}
The inclusion $E \hookrightarrow N \sqcup E$ and the given section $p^*\widetilde{u} \colon E \to p^*(S^\cL \wedge_X Y)$ also assemble into a map
\[
    E \lra (N \sqcup E) \wedge_E p^*(S^\cL \wedge_X Y).
\]
On the subspace $N \subset E$ it agrees with the composition of $\rho^*u$ and the map~\eqref{eqn:map-from-pushout}, hence we obtain a map
\begin{equation}\label{eqn:ansazt-comparison-map}
    E \cup_N \frac{\rho^*Y}{\sim} \lra (N \sqcup E) \wedge_E p^*(S^\cL \wedge_X Y).
\end{equation}

This is a map of spaces over $E$, so we may check that it is an equivalence after passing to homotopy fibres. Let $e \in E$, and write $h \colon F = \fib(N \to E) \to N$. Here and in the remainder of the proof, all homotopy fibres will be taken at $e$. From the following strongly cartesian cube
\[
\begin{tikzcd}[column sep={between origins, 5.5em}, row sep={between origins, 2em}]
 & \fib(\rho|_{\partial N}^*Y \to E) \arrow[lddd] \arrow[dd] \arrow[rr] & & \fib(\rho^*Y \to E) \arrow[dd] \\
 & & & \\
 & \rho|_{\partial N}^*Y \arrow[rr] \arrow[lddd] & & \rho^*Y \arrow[lddd]   \\
 \fib(\partial N \to E) \arrow[crossing over]{rr} \arrow[dd] & & F \arrow[dd]  & \\
 & & & \\
 \partial N \arrow[rr] & & N  &  
 \arrow[from=1-4, to=4-3, crossing over]
\end{tikzcd}
\]
we see that $\fib(\rho|_{\partial N}^*Y \to E) \simeq \fib(\partial N \to E) \times_F \fib(\rho^*Y \to E)$. Now using the pushout square defining $\frac{\rho^*Y}{\sim}$ and Mather's second cube theorem \cite[Theorem~25]{Mather1976}, we obtain a pushout square
\[
\begin{tikzcd}
\fib(\partial N \to E) \times_F \fib(\rho^*Y \to E) \arrow[d] \arrow[r] & \fib(\rho^*Y \to E) \arrow[d]    \\
\fib(\partial N \to E) \arrow[r]                                        & |[label={[label distance=-15mm]-45:\ulcorner}]|\fib(\frac{\rho^*Y}{\sim} \to E)
\end{tikzcd}
\]
Hence $\fib(\frac{\rho^*Y}{\sim} \to E) \simeq \fib(\partial N \to E) \ast_F \fib(\rho^*Y \to E)$ by definition of the fibrewise join $\ast_F$.
Likewise, from the pushout defining $E \cup_N \frac{\rho^*Y}{\sim}$ and Mather's second cube theorem, we get a pushout
\begin{equation}\label{eqn:mather-pushout}
\begin{tikzcd}
F \arrow[d] \arrow[r] & \fib(\partial N \to E) \ast_F \fib(\rho^*Y \to E) \arrow[d] \\
* \arrow[r]           & |[label={[label distance=-15mm]-45:\ulcorner}]|\fib(E \cup_N \frac{\rho^*Y}{\sim} \to E)                  
\end{tikzcd}    
\end{equation}
The inclusion $\partial N \hookrightarrow N$ is homotopy equivalent to the sphere bundle $S(\rho^*\cL)$ of the normal bundle. Hence, by pulling back, $\fib(\partial N \to E) \to F$ is equivalent to the sphere bundle $S(h^*\rho^*\cL) \to F$. From the pullback squares
\[
\begin{tikzcd}
|[label={[label distance=-2mm]-45:\lrcorner}]|\fib(\rho^*Y \to E) \arrow[r] \arrow[d] & |[label={[label distance=-2mm]-45:\lrcorner}]|F \arrow[r] \arrow[d, "h"] & * \arrow[d] \\
\rho^*Y \arrow[r]                       & N \arrow[r]           & E          
\end{tikzcd}
\]
we also see that $\fib(\rho^*Y \to E) \to F$ is equivalent to $h^*\rho^*Y \to F$. But this fibration is trivial because the composite
\[
    \rho \circ h \colon F \lra N \lra E \lra X
\]
is null-homotopic as $F$ is by definition the homotopy fibre of $N \to E$. Hence $h^*\rho^*Y \simeq F \times Y_x$ and $h^*\rho^*\cL \simeq F \times \cL_x$, where $x = p(e)$. Plugging this back into the pushout square~\eqref{eqn:mather-pushout}, we obtain
\begin{align*}
    \fib(E \cup_N \frac{\rho^*Y}{\sim} \to E) &\simeq \cofib(F \to (F \times S(\cL_x)) \ast_F (F \times Y_x)) \\
        &\simeq \cofib(F \to F \times (S(\cL_x) \ast Y_x)) \\
        &\simeq \cofib(F \to F \times (S^{\cL_x} \wedge Y_x)) \\
        &\simeq F_+ \wedge (S^{\cL_x} \wedge Y_x)
\end{align*}
which is exactly $\fib((N \sqcup E) \wedge_E p^*(S^{\cL} \wedge_X Y) \to E)$, and one can check that the map~\eqref{eqn:ansazt-comparison-map} restricts to such a homotopy equivalence on a fibre.
\end{proof}

Our first formula using two pushout squares was convenient for proving the lemma above, but we could have equivalently obtained $S(J^1\cL) \cup_N \frac{\rho^*Y}{\sim}$ from the product $S(J^1\cL) \times_X Y$ by crushing the second factor over the complement of the open tubular neighbourhood, i.e.
\begin{align*}
    S(J^1\cL) \cup_N \frac{\rho^*Y}{\sim} \cong \mathrm{pushout}\left(S(J^1\cL) \longleftarrow \mathring{N}^c \times_X Y \lra S(J^1\cL) \times_X Y\right).
\end{align*}
We shall simply denote this quotient by $\frac{S(J^1\cL) \times_X Y}{\sim}$.
Recall that the join of two topological spaces $A$ and $B$ is the space
\[
    A * B = \frac{A \times [0,1] \times B}{(a,0,b) \sim (a,0,b') \text{ and } (a,1,b) \sim (a',1,b)}.
\]
The fibrewise join is defined by the obvious generalisation of this formula. We shall write $[a,t,b]$ for a general element of the join, though $a \in A$ (resp. $b \in B$) is only well-defined if $t \neq 1$ (resp. $t \neq 0$). Because $J^1\cL \cong \cL \oplus \Omega^1_X \otimes \cL$, we have a homeomorphism $S(J^1\cL) \cong S(\cL) *_X S(\Omega^1_X \otimes \cL)$. This lets us define a natural collapse map:
\begin{lemma}\label{lemma:collapse-to-join}
The map
\begin{align*}
    S(\cL) *_X (S(\Omega^1_X \otimes \cL) \times_X Y) &\lra \frac{S(J^1\cL) \times_X Y}{\sim} \\
    [v,t,(w,y)] &\longmapsto [[v,t,w],y]
\end{align*}
is a homotopy equivalence of spaces over $X$.
\end{lemma}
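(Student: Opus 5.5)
Both sides are fibrations over $X$: the source is a fibrewise join of fibrations, and the target is homotopy equivalent over $X$ to $(N \sqcup S(J^1\cL)) \wedge_{S(J^1\cL)} p^*(S^\cL \wedge_X Y)$ by \cref{lemma:ansazt-nice-formula}. So I will prove the lemma fibrewise. I first check that the map is well defined and continuous. When $t=1$ the point $[v,1,(w,y)]$ does not depend on $(w,y)$, and $[v,1,w] \in S(\cL) \subset \mathring{N}^c$, so the image $[[v,1,w],y]$ is collapsed and does not depend on $y$ either. When $t=0$ everything is determined by $(w,y)$, as required.

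I will fix the tubular neighbourhood so that, under the homeomorphism $S(J^1\cL) \cong S(\cL) *_X S(\Omega^1_X \otimes \cL)$, the closed neighbourhood $N$ is $\{[v,t,w] : t \le 1/2\}$. This agrees with the definition $\lVert v \rVert \le \lVert w \rVert$ after a reparametrisation of $t$. Then $\mathring{N}^c = \{t \ge 1/2\}$, and $\partial N = \{t = 1/2\} \cong S(\cL) \times_X S(\Omega^1_X \otimes \cL)$.

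With this choice, the target is the quotient of $S(\cL) \times [0,1] \times S(\Omega^1_X \otimes \cL) \times_X Y$ by two relations: the join relations in the first three coordinates, and the relation crushing the $Y$-coordinate for $t \ge 1/2$. The region $t \ge 1/2$ of the target is then $\{t \ge 1/2\} \subset S(J^1\cL)$, which deformation retracts fibrewise onto $S(\cL)$ (the locus $t=1$) by pushing $t \mapsto 1$. This homotopy respects the crushing and the join relations, and it extends to the region $t \le 1/2$ as the identity after a reparametrisation of $[0,1]$. Concretely, I write down an explicit inverse up to homotopy. The map $g$ from the target to the source sends $[[v,t,w],y]$ to $[v,\min(2t,1),(w,y)]$. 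This is well defined because for $t \ge 1/2$ the output no longer depends on $w$ or $y$.

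The composite of $g$ with the collapse map sends $[v,t,(w,y)]$ to $[v,\min(2t,1),(w,y)]$, which is homotopic to the identity over $X$ via the straight-line homotopy $\min((1+s)t,1)$ on the parameter. The other composite sends $[[v,t,w],y]$ to $[[v,\min(2t,1),w],y]$, and the analogous homotopy stays compatible with the collapse at every stage, since it only increases $t$.

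The main obstacle is point-set continuity on the quotient spaces: the joins, collapses and homotopies must descend to continuous maps over $X$. This is where compactly generated weak Hausdorff conventions matter. I would handle it by noting that all the maps are induced from continuous maps on $S(\cL) \times_X [0,1] \times_X S(\Omega^1_X \otimes \cL) \times_X Y$ (times $[0,1]$ for the homotopies) which respect the identifications. Since products with $[0,1]$ preserve quotient maps in compactly generated spaces, the maps and homotopies descend to continuous maps on the quotients.
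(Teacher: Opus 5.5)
Your proposal is correct and is essentially the paper's argument. Both give an explicit homotopy inverse that linearly rescales the join coordinate so that $\mathring{N}^c$ is sent to the $S(\cL)$-end, and both check it with straight-line homotopies in $t$; your opening fibrewise reduction via \cref{lemma:ansazt-nice-formula} is never actually used.

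The only discrepancy is cosmetic. Your join coordinate is reversed relative to the paper's definition, where $(a,0,b)\sim(a,0,b')$ makes $t=0$ the $S(\cL)$-end and $N=\{t\ge 1/2\}$. After substituting $t\mapsto 1-t$, your inverse $[v,\min(2t,1),(w,y)]$ becomes exactly the paper's $[v,\max(0,t/\epsilon-1/\epsilon+1),(w,y)]$ with $\epsilon=1/2$.
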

\begin{proof}
One readily checks that the formula gives a well-defined map. Under any identification of $S(J^1\cL)$ with the join $S(\cL) *_X S(\Omega^1_X \otimes \cL)$, the closed neighbourhood $N$ corresponds to a subspace of the form $\set{[v,t,w]}{t \in [1-\epsilon, 1]}$ for some $0 < \epsilon < 1$. Then a homotopy inverse to the collapse map is given by
\begin{align*}
    \frac{S(J^1\cL) \times_X Y}{\sim} &\lra S(\cL) *_X (S(\Omega^1_X \otimes \cL) \times_X Y)  \\
     [[v,t,w],y] &\longmapsto [v,\max(0, t/\epsilon - 1/\epsilon  + 1),(w,y)],
\end{align*}
as one verifies using straight-line homotopies.
\end{proof}

Combining \cref{lemma:ansazt-nice-formula,lemma:collapse-to-join}, we have thus obtained:
\begin{proposition}\label{proposition:ansatz-is-join}
There is a weak homotopy equivalence of fibrations over $X$
\[
    \pushQED{\qed} 
    (N \sqcup S(J^1\cL)) \wedge_{S(J^1\cL)} p^*(S^\cL \wedge_X Y) \simeq S(\cL) *_X (S(\Omega^1_X \otimes \cL) \times_X Y). \qedhere
    \popQED
\]
\end{proposition}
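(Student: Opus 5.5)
The plan is to compose the two comparisons already established, keeping track of the fact that one of them goes in the opposite direction. \Cref{lemma:ansazt-nice-formula} provides a weak homotopy equivalence of retractive spaces over $S(J^1\cL)$,
\[
    S(J^1\cL) \cup_N \tfrac{\rho^*Y}{\sim} \overset{\simeq}{\lra} (N \sqcup S(J^1\cL)) \wedge_{S(J^1\cL)} p^*(S^\cL \wedge_X Y).
\]
Composing with the projection $p \colon S(J^1\cL) \to X$, this becomes a map of spaces over $X$, and it is still a weak equivalence on total spaces.

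Next I would identify the source with the quotient $\frac{S(J^1\cL) \times_X Y}{\sim}$. I would check the homeomorphism stated in the text: $S(J^1\cL) \cup_N \frac{\rho^*Y}{\sim}$ is the pushout of $S(J^1\cL) \leftarrow \mathring{N}^c \times_X Y \to S(J^1\cL) \times_X Y$. On $N$ the space $\rho^*Y = N \times_X Y$ is kept, with the $Y$ coordinate crushed over $\partial N$. Off $\mathring{N}$ only $S(J^1\cL)$ remains, glued along the section $u$. This is exactly the crushing of $S(J^1\cL)\times_X Y$ over $\mathring N^c$, and the homeomorphism commutes with the projections to $X$.

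Then \cref{lemma:collapse-to-join} supplies a homotopy equivalence over $X$ from $S(\cL) *_X (S(\Omega^1_X \otimes \cL) \times_X Y)$ to $\frac{S(J^1\cL)\times_X Y}{\sim}$. This gives a zigzag of maps over $X$:
\[
    S(\cL) *_X (S(\Omega^1_X \otimes \cL) \times_X Y) \lra \tfrac{S(J^1\cL)\times_X Y}{\sim} \cong S(J^1\cL) \cup_N \tfrac{\rho^*Y}{\sim} \lra (N \sqcup S(J^1\cL)) \wedge_{S(J^1\cL)} p^*(S^\cL \wedge_X Y).
\]
Every map in the zigzag is a weak equivalence on total spaces.

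The main obstacle is justifying that this is an equivalence \emph{of fibrations over $X$}, i.e.\ a fibrewise equivalence on homotopy fibres. A map over $X$ that is a weak equivalence on total spaces induces equivalences on homotopy fibres over every point, by the long exact sequences (or by passing to fibrant replacements in $\catS_{/X}$). So the composite zigzag gives an equivalence in $\catS_{/X}$. Moreover, the end terms are genuine fibrations: the smash product term by the remark after \cref{theorem:main}, and the fibrewise join of fibrations over $X$ by \cite[Proposition~8.2.1]{MaySigurdsson}-type arguments. Their homotopy fibres are therefore the actual fibres, and in particular their section spaces agree up to weak equivalence, which is what is needed to deduce \cref{maintheorem:scanning} from \cref{theorem:main}.
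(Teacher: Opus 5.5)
Your proposal is correct and matches the paper's argument: the paper also proves this by composing the collapse map of \cref{lemma:collapse-to-join}, the homeomorphism $S(J^1\cL) \cup_N \frac{\rho^*Y}{\sim} \cong \frac{S(J^1\cL)\times_X Y}{\sim}$, and the equivalence of \cref{lemma:ansazt-nice-formula}. Your extra remarks on why a weak equivalence of total spaces over $X$ gives an equivalence in $\catS_{/X}$ are fine, and the three maps actually compose in one direction, from the join to the smash product, so no genuine zigzag is needed.
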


\section{Proof of concept: Riemann surfaces in a background space}\label{section:POC}

The purpose of this section is to showcase our general method in the case of the moduli space of closed Riemann surfaces. Although the result is not new and was obtained by Cohen and Madsen in \cite{CohenMadsen}, and modern proofs can even deal with more general tangential structures \cite{OscarResolutionmoduli}, we find it instructive to give a new proof.

\bigskip

Let $S_g$ be a closed smooth surface of genus $g$. Denote by $\Diff^+(S_g)$ the topological group of orientation-preserving diffeomorphisms of $S_g$ with the $\cC^\infty$ topology. Let $Y$ be a space. As in \cite[Definition~1.1]{OscarResolutionmoduli}, we define the moduli space of closed Riemann surfaces in the background space $Y$ as the homotopy quotient:
\[
    \cM_g(Y) = \Map(S_g,Y) \sslash \Diff^+(S_g).
\]
Let $\MTSO(2)$ be the Madsen--Tillmann spectrum
\[
    \MTSO(2) = \mathrm{Th}(-\gamma_2 \to BSO(2)),
\]
or in other words the Thom spectrum of the virtual inverse of the tautological bundle $\gamma_2$ on $BSO(2)$. The parameterised Pontrjagin--Thom construction gives a natural transformation
\[
    \bZ \times \cM_g(Y) \lra \Omega^\infty(\MTSO(2) \otimes \Sigma^\infty_+Y)
\]
of functors of $Y$. For a construction of this map see e.g. \cite[Proof of Theorem~3.5]{CohenMadsen}, or combine \cite[Main theorem (with tangential structures)]{GMTW} and the inclusion of $\cM_g(Y)$ as a component of the space of maps of the cobordism category. (For the functorially minded reader, it can also be obtained quite formally using parameterised spectra as in \cite[Construction~5.19]{Azelie}.) Our goal is to prove the following theorem, using as inputs the Madsen--Weiss theorem \cite{MadsenWeiss} and Looijenga's computations of the stable homology of the moduli space of marked Riemann surfaces \cite{Looijenga}.
\begin{theorem}[{\cite[Theorem~0.1]{CohenMadsen}}]\label{theorem:surfaces-in-background}
Let $Y$ be a $2$-connected space. Then the map
\[
    \bZ \times \cM_g(Y) \lra \Omega^\infty(\MTSO(2) \otimes \Sigma^\infty_+Y)
\]
induces an isomorphism in integral homology in degrees $* \ll g$.
\end{theorem}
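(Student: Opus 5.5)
We follow the general method of the introduction: compare the Goodwillie towers in the variable $Y$ of the two functors
\[
    F(Y) = \Sigma^\infty_+\big(\bZ \times \cM_g(Y)\big), \qquad G(Y) = \Sigma^\infty_+\Omega^\infty\big(\MTSO(2)\otimes\Sigma^\infty_+Y\big),
\]
viewed on (pointed) spaces with $Y$ at least $2$-connected. Since we only care about a range $* \ll g$, and homology of a connected infinite loop space component is unaffected by $\bZ\times$, we may work with the basepoint component and $\cM_g(Y)$ directly. The argument has four steps.

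\textbf{Towers and convergence.} The projection $\cM_g(Y) \to \cM_g \simeq B\Diff^+(S_g)$ is a fibration with fibre $\Map(S_g,Y)$. Hence $\Sigma^\infty_+\cM_g(Y) \simeq \colim_{B\Diff^+(S_g)}\Sigma^\infty_+\Map(S_g,Y)$. Since $P_n$ commutes with colimits, the tower of $F$ is obtained by taking homotopy orbits of Arone's tower \cite{Greg}, i.e. \cref{theorem:goodwillie-approximation} with $X=S_g$ and $E=S_g\times *$, made $\Diff^+(S_g)$-equivariant exactly as in \cref{proposition:parameterised-tower}. The layer is
\[
    D_nF(Y) \simeq \big(\Conf_n(S_g)\sslash \Diff^+(S_g)\big)_+ \otimes_{h\Sigma_n} \big(\bS^{-T}\otimes \Sigma^\infty Y^{\wedge n}\big),
\]
a Thom spectrum over $\cM_{g,n}$, the moduli of genus $g$ surfaces with $n$ ordered marked points, twisted by minus the tangent lines at the markings. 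Convergence holds for $\conn(Y) > 2$; since $F$ is a colimit of $2$-analytic functors, and $Y$ is $2$-connected, we may need to be off by one. I would check the analyticity bound of \cref{proposition:order-n-agreement} carefully, or slightly strengthen the hypothesis when $\conn Y = 2$.

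For $G$, write $\MTSO(2)\otimes\Sigma^\infty_+Y \simeq \MTSO(2)\oplus \MTSO(2)\otimes\Sigma^\infty Y$. Since $\Omega^\infty$ sends sums to products, and $\Sigma^\infty_+\Omega^\infty$ of a connective spectrum $W$ splits (Snaith-type splitting, or its Goodwillie tower) as layers $W^{\otimes n}_{h\Sigma_n}$, we get
\[
    D_nG(Y) \simeq \Sigma^\infty_+\Omega^\infty\MTSO(2) \otimes \big(\MTSO(2)\otimes \Sigma^\infty Y\big)^{\otimes n}_{h\Sigma_n}.
\]
Here $\MTSO(2)$ is $(-2)$-connected, so $D_nG$ is $n(\conn Y - 1)$-ish connected and the tower converges for $Y$ $2$-connected.

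\textbf{Identify the map on layers.} Chasing the Pontrjagin--Thom map, as in \cref{lemma:map-on-differentials}, the map $D_nF \to D_nG$ should be induced by
\[
    \cM_{g,n} \lra \Omega^\infty\MTSO(2) \times BSO(2)^n,
\]
the scanning map combined with the tangent lines at the markings. This pulls back $\gamma_2^{\boxplus n}$ to the marked tangent bundles, so it is compatible with the Thom twists. Note that $\MTSO(2)^{\otimes n}\otimes\Sigma^\infty Y^{\wedge n}$ is the Thom spectrum of $-\gamma_2^{\boxplus n}$ over $BSO(2)^n$ smashed with $Y^{\wedge n}$, matching the $\bS^{-T}$ twist.

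\textbf{Connectivity and induction.} By Madsen--Weiss \cite{MadsenWeiss} together with Looijenga's stable homology of $\cM_{g,n}$ \cite{Looijenga}, namely $H_*(\cM_{g,n})\cong H_*(\Omega^\infty_0\MTSO(2)\times (\bC P^\infty)^n)$ in degrees $\ll g$, the map above is a homology isomorphism in degrees $\ll g$ (uniformly in $n$, or with a loss linear in $n$ as in \cref{proposition:connectivity-map-differentials}). Thom isomorphism, $\Sigma_n$-orbits and smashing with the $(n\conn Y - 1)$-connected $Y^{\wedge n}$ then make $D_nF\to D_nG$ connected in a range $\ll g$ plus a term growing with $n$. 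Inductively using the five lemma, each $P_nF\to P_nG$ is connected in the range $* \ll g$, and convergence of both towers gives the theorem.

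\textbf{Main obstacle.} I expect the hardest step to be the identification of the map on layers with the map induced by the marked-point scanning map, i.e. making the Pontrjagin--Thom transformation explicit enough to compare with Arone's model. I would try to do this via the parameterised-spectra construction of \cite{Azelie}, which makes the transformation formal.
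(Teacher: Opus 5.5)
Your proposal follows essentially the same route as the paper. The source tower is Arone's tower made $\Diff^+(S_g)$-equivariant, the target has derivatives $\Sigma^\infty_+\Omega^\infty\MTSO(2)\otimes\MTSO(2)^{\otimes n}$, Madsen--Weiss handles $n=0$, and Looijenga plus the Thom isomorphism handles the layers; the identification of the layer map with the scanning-plus-tangent-lines map on $\cM_{g,n}$ is likewise left to the reader in the paper. Your worry about an off-by-one in convergence is unfounded: the paper's $\conn(Y/E)$ is the connectivity of the map $Y\to E$, i.e.\ $\conn(\text{fibre})+1$, so a $2$-connected $Y$ has $\conn(Y/E)=3>\dim S_g$ (Arone's condition $\dim K\le\conn Y$), and on the target side $\MTSO(2)\otimes\Sigma^\infty Y$ is connected, not merely connective, which is what convergence of $\Sigma^\infty\Omega^\infty$ needs, so no strengthening of the hypothesis is required.
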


As explained in the introduction, and exemplified in this article, we will study the Goodwillie towers of the functors in the theorem. For reasons of convergence of the towers, we need to assume that $Y$ is $2$-connected, whereas \cite[Theorem~0.1]{CohenMadsen} only require $1$-connectedness.

\begin{lemma}\label{lemma:derivatives-exponential}
The derivatives of the functor $F = \Sigma^\infty \Omega^\infty$ based at $E \in \catSp$ are given by the spectra
\[
    \partial_i^E F \simeq \bS^0 \oplus F(E)
\]
with trivial $\Sigma_i$ action for $i \geq 1$.
\end{lemma}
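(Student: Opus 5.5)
To compute the derivatives of $F = \Sigma^\infty\Omega^\infty$ based at $E$, I would first reduce to the origin by translation. For $Z \in \catSp_{E//E}$, i.e.\ a spectrum $Z$ with a retraction onto $E$, write $Z \simeq E \oplus Z'$ where $Z' = \fib(Z \to E)$. Since $\Omega^\infty$ sends direct sums to products, we get
\[
    \Omega^\infty Z \simeq \Omega^\infty E \times \Omega^\infty Z'.
\]
Hence the based functor is naturally equivalent to
\[
    F_E(Z) \simeq \Sigma^\infty \Omega^\infty E \otimes \Sigma^\infty_+\Omega^\infty Z' \oplus \Sigma^\infty\Omega^\infty E \oplus \Sigma^\infty \Omega^\infty Z'.
\]
This uses the splitting $\Sigma^\infty(A \times B) \simeq \Sigma^\infty A \oplus \Sigma^\infty B \oplus \Sigma^\infty A \otimes \Sigma^\infty B$ for pointed $A,B$. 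Grouping terms, it is simpler to write
\[
    F_E(Z) \simeq \Sigma^\infty_+\Omega^\infty E \otimes \Sigma^\infty_+\Omega^\infty Z'
\]
and then take reduced parts. Here $\Sigma^\infty_+\Omega^\infty E = \bS^0 \oplus F(E)$ is a constant spectrum with respect to the variable $Z'$.

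The second step is to recall the classical Goodwillie tower of $G = \Sigma^\infty_+\Omega^\infty$ based at $0$. Its $i$th layer is $D_iG(Z') \simeq (Z'^{\otimes i})_{h\Sigma_i}$, so $\partial_i G \simeq \bS^0$ with trivial $\Sigma_i$-action for all $i \geq 1$. I would quote this from \cite{GoodwillieHandbook}. It can also be checked directly: on suspension spectra $Z' = \Sigma^\infty W$, the Snaith splitting and $\Sigma^\infty_+ Q W \simeq \bigoplus_i (\Sigma^\infty W)^{\otimes i}_{h\Sigma_i}$ exhibit these homogeneous pieces.

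Third, since tensoring with a fixed spectrum $C = \bS^0 \oplus F(E)$ is exact and commutes with colimits, it commutes with the construction of $P_n$ and with taking layers. Therefore
\[
    D_i F_E(Z) \simeq C \otimes (Z'^{\otimes i})_{h\Sigma_i} \simeq (C \otimes Z'^{\otimes i})_{h\Sigma_i},
\]
with $\Sigma_i$ acting trivially on $C$. Identifying $Z'$ with $\Sigma^\infty_E Z$, i.e.\ the stabilisation of a retractive object over $E$ in the stable setting, this reads off $\partial^E_i F \simeq \bS^0 \oplus F(E)$ with trivial action for $i \geq 1$.

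The main point requiring care is the first step. One must check that the equivalence $F_E(Z) \simeq C \otimes G(Z')$ is natural in $Z \in \catSp_{E//E}$. One must also check that the passage $Z \mapsto Z'$ identifies $\catSp_{E//E}$ with $\catSp$ compatibly with the notion of derivatives used in the paper. I expect this to be unproblematic, since the retraction canonically splits $Z$ in a stable category.
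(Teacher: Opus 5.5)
Your proposal is correct and follows essentially the same route as the paper. Both arguments translate to the origin via the splitting $Z \simeq E \oplus Z'$, use the splitting of the suspension of a product to write the reduced based functor as $(\bS^0 \oplus F(E)) \otimes \Sigma^\infty\Omega^\infty(Z')$, pull the constant factor out of $P_n$ and the layers, and conclude with $\partial_i^0\Sigma^\infty\Omega^\infty \simeq \bS^0$ with trivial action. One bookkeeping slip is harmless because constant summands do not affect $\partial_i$ for $i \geq 1$: your first display counts $\Sigma^\infty\Omega^\infty E$ twice, and your ``grouped'' formula is really $\bS^0 \oplus F_E(Z)$.
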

\begin{proof}
The higher derivatives of $F$ agree with the ones of its associated reduced functor $\cofib(F(E) \to F)$. Combined with the equivalence $\catSp \cong \catSp_{E//E}$, $X \mapsto X \oplus E$, we see that it suffices to compute the derivatives of
\[
    \catSp \overset{\cong}{\lra} \catSp_{E//E} \overset{F}{\lra} \catSp_{F(E)//F(E)} \overset{\cofib}{\lra} \catSp
\]
sending $X \mapsto \cofib(F(E) \to F(X \oplus E))$. The standard splitting of the suspension of a product yields
\[
    F(X \oplus E) \simeq \Sigma^\infty \Omega^\infty (X \oplus E) \simeq \Sigma^\infty( \Omega^\infty X \times \Omega^\infty E) \simeq F(X) \oplus F(E) \oplus F(X) \otimes F(E),
\]
hence our composite functor is equivalent to $X \mapsto F(X) \otimes (\bS^0 \oplus F(E))$. It follows directly from the classification of homogeneous functors that, for any spectrum $Y$ and functor $G$, the derivative $\partial_i(Y \otimes G(-))$ is given by $Y \otimes \partial_i G$. Therefore
\[
    \partial_i^E F \simeq (\bS^0 \oplus F(E)) \otimes \partial_i^0 F
\]
where $\partial_i^0 F$ is the $i$th derivative of $F$ based at the zero spectrum. The lemma follows by directly computing using cross-effects that, for $i \geq 1$, $\partial_i^0 F \simeq \bS^0$ with the trivial $\Sigma_i$ action.
\end{proof}

We will now compute the derivatives of the target of the scanning map. Without loss of generality, we shall consider functors from the category of pointed spaces as it simplifies the theory of derivatives in the calculus.
\begin{corollary}\label{corollary:layers-mtso}
For $i \geq 1$, the $i$th derivative of the functor
\begin{align*}
    \catS_* &\lra  \catSp \\
    Y &\longmapsto \Sigma^\infty \Omega^\infty(\MTSO(2) \otimes \Sigma^\infty_+Y)
\end{align*}
is given by the spectrum
\[
    \partial_i = (\bS^0 \oplus \Sigma^\infty\Omega^\infty\MTSO(2)) \otimes \MTSO(2)^{\otimes i}
\]
together with the permutation action of the symmetric group $\Sigma_i$ on $\MTSO(2)^{\otimes i}$, and the trivial action on the other factor.
\end{corollary}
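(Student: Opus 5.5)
The functor in question factors as the composite
\[
    \catS_* \overset{\Sigma^\infty_+}{\lra} \catSp \overset{\MTSO(2) \otimes -}{\lra} \catSp \overset{\Sigma^\infty\Omega^\infty}{\lra} \catSp,
\]
where the first two functors are computed in the derived sense. I plan to reduce to \cref{lemma:derivatives-exponential} by a chain rule for this composite. The point is that the inner part is very simple. Set $E = \MTSO(2) \otimes \Sigma^\infty_+ *  \simeq \MTSO(2)$, the value at the basepoint. For $Y \in \catS_*$ there is a natural splitting $\Sigma^\infty_+ Y \simeq \bS^0 \oplus \Sigma^\infty Y$. Hence
\[
    \MTSO(2) \otimes \Sigma^\infty_+Y \simeq E \oplus (\MTSO(2) \otimes \Sigma^\infty Y),
\]
and the functor becomes $Y \mapsto F(E \oplus L(Y))$, where $F = \Sigma^\infty\Omega^\infty$ and $L(Y) = \MTSO(2) \otimes \Sigma^\infty Y$. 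The functor $L$ is reduced and $1$-homogeneous, with first derivative $\MTSO(2)$. This identifies our functor with $F^E \circ L$, where $F^E \colon \catSp \simeq \catSp_{E//E} \to \catSp$ is $F$ based at $E$.

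The next step is to compute the derivatives of $F^E \circ L$ from those of $F^E$. By \cref{lemma:derivatives-exponential}, each layer of $F^E$ is
\[
    D_iF^E(Z) \simeq \big((\bS^0 \oplus F(E)) \otimes Z^{\otimes i}\big)_{h\Sigma_i}.
\]
Since $L$ is exact, precomposition with $L$ preserves fibre sequences and sends $n$-excisive functors on $\catSp$ to $n$-excisive functors on $\catS_*$. This is the same argument as in \cref{lemma:pulled-back-tower}. In particular $D_iF^E \circ L$ is $i$-homogeneous, with formula
\[
    Y \longmapsto \big((\bS^0 \oplus F(E)) \otimes \MTSO(2)^{\otimes i} \otimes (\Sigma^\infty Y)^{\otimes i}\big)_{h\Sigma_i}.
\]
Here $\Sigma_i$ acts by permuting the factors of $\MTSO(2)^{\otimes i}$ together with those of $(\Sigma^\infty Y)^{\otimes i}$, and acts trivially on $\bS^0 \oplus F(E)$. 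Reading off the coefficient spectrum gives the claimed $\partial_i$.

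It remains to know that $P_i(F^E \circ L) \simeq (P_iF^E) \circ L$. I would argue as in the second half of \cref{lemma:pulled-back-tower}. The functor $\Sigma^\infty\Omega^\infty$ is stably $n$-excisive for all $n$: it is $\rho$-analytic with $\rho = 1$, since it agrees to order $n$ with its tower on $0$-connected spectra. The map $L(Y) \to 0$ is at least as connected as $Y \to *$ because $\MTSO(2)$ is $(-3)$-connective. I expect this connectivity bookkeeping to be the only delicate point: $\MTSO(2)$ is not connective, so $F^E \to P_nF^E$ is only an agreement on sufficiently connected inputs. That should suffice, since order $n$ agreements only need to hold above some connectivity threshold. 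Given this, $F^E \circ L \to P_nF^E \circ L$ is an order $n$ agreement onto an $n$-excisive functor. By \cite[Proposition~1.6]{Goodwillie3} it is then the $n$th polynomial approximation, and the layers are as computed above.
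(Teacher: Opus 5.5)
Your proposal is correct and follows essentially the same route as the paper. You factor through the linear functor $\MTSO(2)\otimes\Sigma^\infty_+(-)$ into $\catSp_{\MTSO(2)//\MTSO(2)}\simeq\catSp$, apply \cref{lemma:derivatives-exponential}, and use the degenerate chain rule: clear on homogeneous layers, then extended via the order-$n$ agreement argument of \cref{lemma:pulled-back-tower}.

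One correction: $L$ does not preserve connectivity. $\MTSO(2)$ is $(-2)$-connective (not $(-3)$-connective), so a $k$-connected $Y$ gives a $(k-2)$-connected $L(Y)$. As you anticipate, this uniform shift is harmless, but the reason is not just a higher threshold: the constants $(c,\kappa)$ of the order-$n$ agreement must be replaced by $(c+2(n+1),\kappa+2)$.
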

\begin{proof}
This functor is the composition of the reduced functor
\[
    \MTSO(2) \otimes \Sigma^\infty_+(-) \colon \catS_* \lra \catSp_{\MTSO(2)//\MTSO(2)}
\]
and the functor
\[
    \Sigma^\infty\Omega^\infty \colon \catSp_{\MTSO(2)//\MTSO(2)} \lra \catSp.
\]
By \cref{lemma:derivatives-exponential}, the $i$th derivative of the second functor is the spectrum $\bS^0 \oplus \Sigma^\infty\Omega^\infty\MTSO(2)$ with trivial $\Sigma_i$ action. Observe now that the first functor is linear with (first) derivative $\MTSO(2)$. We can thus conclude by using a very degenerate version of a chain rule: given a spectrum $E \in \catSp$ and $G \colon \catSp_{E//E} \to \catSp$ a stably $i$-excisive functor, the $i$th derivative of the composite $G \circ (E \otimes \Sigma^\infty_+(-))$ is $\partial_iG \otimes E^{\otimes i}$. The claim is clear if $G$ is homogeneous and follows by induction in general. (We have used a similar argument in the proof of \cref{lemma:pulled-back-tower}.)
\end{proof}

\begin{lemma}\label{lemma:layers-Mg}
For $i \geq 1$, the $i$th derivative of the functor 
\begin{align*}
    \catS_* &\lra \catSp \\
    Y &\longmapsto \Sigma^\infty_+\cM_g(Y)
\end{align*}
is given by the spectrum
\[
    \partial_i = \Conf_i(S_g)^{-iTS_g} \sslash \Diff^+(S_g)
\]
together with the natural permutation action of the symmetric group $\Sigma_i$ on the configurations and the summands of $iTS_g = (TS_g)^{\boxplus i}$.
\end{lemma}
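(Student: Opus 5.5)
We would compute the derivatives of $Y \mapsto \Sigma^\infty_+\cM_g(Y)$ by exhibiting $\cM_g(Y)$ as the colimit of a family of mapping spaces. Since $\cM_g(Y) = \Map(S_g,Y)\sslash\Diff^+(S_g)$ is the total space of the fibration $\cM_g(Y) \to B\Diff^+(S_g)$ with fibre $\Map(S_g,Y)$, straightening gives an equivalence
\[
\Sigma^\infty_+\cM_g(Y) \simeq \colim_{B\Diff^+(S_g)} \Sigma^\infty_+\Map(S_g,Y) \simeq \big(\Sigma^\infty_+\Map(S_g,Y)\big)_{h\Diff^+(S_g)},
\]
natural in $Y$. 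The argument is the same as at the start of the section on derivatives of the moduli space, where $\Sigma^\infty_+\cM^{\mathrm{tub}}(\cL;Y)$ was written as a colimit over $\cM(\cL)$. For spectrum-valued functors, $P_n$ and $D_n$ commute with all colimits \cite[Proposition~1.7.(5)]{Goodwillie3}. Hence $D_i\Sigma^\infty_+\cM_g(-)$ is the $\Diff^+(S_g)$-homotopy orbits of $D_i\Sigma^\infty_+\Map(S_g,-)$, and the derivative is the homotopy orbits of the derivative of the mapping space functor, provided the identification there is $\Diff^+(S_g)$-equivariant.

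Next we would identify the derivatives of the fibre functor. Take $X = S_g$ and $E = S_g \times * = S_g$, so that retractive spaces over $E$ correspond to pointed spaces $Y$ via $S_g\times Y$, and $\Gamma(S_g; S_g\times Y) = \Map(S_g,Y)$. Then \cref{theorem:goodwillie-approximation}, together with \cref{remark:tower-extra-basepoint} for the extra basepoint (which affects only $\partial_0$), gives the $i$th derivative as $\mathrm{ev}_!\bS^0 \otimes p^*\bS^{-TS_g^i}\otimes p^*\omega_!\bS^0$ over $S_g^i\sslash\Sigma_i$. Since $E = X$, the map $p$ is the identity and $\mathrm{ev}\colon S_g^i \times \Gamma(E) \to S_g^i$ is the projection from the product with the contractible point $\Gamma(E)=*$, so $\mathrm{ev}_!\bS^0 \simeq \bS^0$. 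The derivative, viewed as a parameterised spectrum, is therefore $\omega_!\bS^{-iTS_g}$. Its underlying spectrum, i.e. the colimit over $S_g^i$, is the Thom spectrum $\Conf_i(S_g)^{-iTS_g}$, with $\Sigma_i$ permuting both the points and the summands. This recovers Arone's formula \cite[Theorem~2]{Greg}. For pointed-space functors, the derivative is a spectrum with $\Sigma_i$-action, obtained by pushing forward to a point, since $D_i(Y) \simeq (\partial_i \otimes (\Sigma^\infty Y)^{\otimes i})_{h\Sigma_i}$.

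The main obstacle is the equivariance: the whole construction must be natural in diffeomorphisms of $S_g$. Here we would use that $A_n(B,Y)$ is functorial in $B$ (\cref{lemma:invariance-in-B}), so $\Diff^+(S_g)$ acts on the point-set model of the tower, compatibly with the map $\Sigma^\infty_+\Map(S_g,Y)\to A^+_n(S_g,Y)$. In the layer computation (\cref{lemma:layer-computation}), the identification $D_i \simeq \Gamma_c(\Conf_i(S_g);(\Sigma^\infty Y)^{\otimes i})_{h\Sigma_i}$ is then visibly $\Diff^+(S_g)\times\Sigma_i$-equivariant. The remaining point is Atiyah duality
\[
\Gamma_c(\Conf_i(S_g);-) \simeq \colim_{\Conf_i(S_g)}\big(-\otimes \bS^{-T\Conf_i(S_g)}\big),
\]
which must be made equivariant. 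We would do this by applying Poincaré duality fibrewise over $B\Diff^+(S_g)$ to the bundle $\Conf_i(S_g)\sslash\Diff^+(S_g) \to B\Diff^+(S_g)$, a family of $2i$-manifolds with vertical tangent bundle $iTS_g$. Fibrewise Atiyah duality, as in the proof of \cref{corollary:homogeneity} but relative over the base, identifies the compactly supported pushforward with the pushforward twisted by $\bS^{-iTS_g}$. Taking the colimit over $B\Diff^+(S_g)$ then yields
\[
D_i\Sigma^\infty_+\cM_g(Y) \simeq \big(\Conf_i(S_g)^{-iTS_g}\sslash\Diff^+(S_g) \otimes (\Sigma^\infty Y)^{\otimes i}\big)_{h\Sigma_i},
\]
and we read off the derivative from the classification of homogeneous functors.
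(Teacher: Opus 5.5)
Your proposal is correct and follows the paper's proof: you write $\Sigma^\infty_+\cM_g(Y)$ as the colimit over $B\Diff^+(S_g)$ of $\Sigma^\infty_+\Map(S_g,Y)$, use that derivatives of spectrum-valued functors commute with colimits, and read off the derivatives of the fibre functor from Arone's theorem or \cref{theorem:goodwillie-approximation}. The paper leaves the $\Diff^+(S_g)$-equivariance of that last identification implicit, so your discussion of it is a useful addition; one small adjustment is to take $X=E=*$ and $B=S_g$ as in \cref{remark:tower-subspace}, because a diffeomorphism of $S_g$ is not a map over $X=S_g$, and this choice makes the functoriality in $B$ directly available.
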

\begin{proof}
For spectrum valued functors, taking derivatives commutes with homotopy colimits \cite[Proposition~1.18]{Goodwillie3}. Moreover, as a left adjoint $\Sigma^\infty_+(-)$ also commutes with homotopy colimits. Hence it suffices to compute the derivatives of the functor
\[
    Y \longmapsto \Sigma^\infty_+\Map(S_g,Y).
\]
But these are precisely given by Arone in \cite[Theorem~2]{Greg}, or by specialising \cref{theorem:goodwillie-approximation}.
\end{proof}

We are now ready to prove \cref{theorem:surfaces-in-background}. Let us stress one more time that we will be using as input the Madsen--Weiss theorem. Our contribution is merely to explain how, combined with Looijenga's computations and a bit of Goodwillie calculus, we can obtain a statement about the moduli of maps.
\begin{proof}[Proof of \cref{theorem:surfaces-in-background}]
The map of the theorem induces a morphism of Goodwillie towers:
\[
    P_\bullet(\Sigma^\infty_+(\bZ \times \cM_g(Y))) \lra P_\bullet(\Sigma^\infty_+\Omega^\infty(\MTSO(2) \otimes \Sigma^\infty_+Y))
\]
When $\conn(Y) \geq \dim(S_g) = 2$, the left tower converges because $\Sigma^\infty_+\Map(S_g,-)$ is $2$-analytic by the main result of \cite[Theorem~2]{Greg}. Directly from the definition of analyticity \cite[Definition~4.2]{Goodwillie2} and the Blakers--Massey theorem \cite[Theorem~2.4]{Goodwillie2}, we see that the tower of $\Sigma^\infty\Omega^\infty$ converges on connected spectra. Because the right tower is based at the $(-2)$-connective spectrum $\MTSO(2)$, it converges for $2$-connected spaces $Y$.
To prove our theorem, it then suffices to show that each map
\[
    P_n(\Sigma^\infty_+(\bZ \times \cM_g(Y))) \lra P_n(\Sigma^\infty_+\Omega^\infty(\MTSO(2) \otimes \Sigma^\infty_+Y))
\]
is an equivalence in degrees $* \ll g$. For $n=0$, this is the map
\[
    \Sigma^\infty_+(\bZ \times \cM_g) \lra \Sigma^\infty_+\Omega^\infty\MTSO(2)
\]
and the claim follows from the Madsen--Weiss theorem \cite[Theorem~1.1]{MadsenWeiss}. By induction and the 5-lemma, it remains to show that the induced map on the layers
\[
    D_n(\Sigma^\infty_+(\bZ \times \cM_g(Y))) \lra D_n(\Sigma^\infty_+\Omega^\infty(\MTSO(2) \otimes \Sigma^\infty_+Y))
\]
is highly connected for $n \geq 1$. By \cref{lemma:layers-Mg,corollary:layers-mtso}, it is itself induced by a map between the derivatives:
\[
    \Sigma^\infty_+\bZ \otimes \Conf_n(S_g)^{-nTS_g} \sslash \Diff^+(S_g) \lra (\bS^0 \oplus \Sigma^\infty\Omega^\infty\MTSO(2)) \otimes \MTSO(2)^{\otimes n}.
\]
As we are ultimately interested in a statement about integral homology, we may as well tensor both sides by the Eilenberg--MacLane spectrum $\bZ$. On the one hand, we obtain
\[
    \Sigma^\infty_+ \bZ \otimes \Conf_n(S_g)^{-nTS_g} \sslash \Diff^+(S_g) \otimes \bZ \simeq \Sigma^{-2n} \Sigma^\infty_+(\bZ \times \cM_{g,n}) \otimes \bZ
\]
where $\cM_{g,n} = \Conf_n(S_g) \sslash \Diff^+(S_g)$ is the moduli space of closed Riemann surfaces of genus $g$ with $n$ distinct marked points, and we have used the Thom isomorphism to untwist the Thom spectrum. (Carefully notice that in that formula $\Sigma^\infty_+\bZ$ is the suspension spectrum of the discrete space $\bZ$ whereas $\bZ$ is the Eilenberg--MacLane spectrum.) On the other hand, we have
\begin{align*}
    (\bS^0 \oplus \Sigma^\infty\Omega^\infty\MTSO(2) ) \otimes \MTSO(2)^{\otimes n} \otimes \bZ &\simeq \Sigma^\infty_+\Omega^\infty \MTSO(2) \otimes \MTSO(2)^{\otimes n} \otimes \bZ \\
    &\simeq \Sigma^{-2n} \Sigma^\infty_+(\Omega^\infty \MTSO(2) \times BSO(2)^n)  \otimes \bZ
\end{align*}
from splitting off the basepoint first, and then using the Thom isomorphism
\[
    \MTSO(2) \otimes \bZ \simeq \Sigma^{-2} \bZ \otimes \Sigma^\infty_+BSO(2).
\]
Looijenga has computed in \cite[Proposition~2.1]{Looijenga} that
\[
    H_*(\bZ \times \cM_{g,n}; \bZ) \cong H_*(\Omega^\infty\MTSO(2);\bZ)[u_1,\ldots,u_n] \cong H_*(\Omega^\infty\MTSO(2) \times BSO(2)^n;\bZ)
\]
in the stable range $* \ll g$. Looijenga states his theorem (proved prior to Madsen and Weiss' theorem) in terms of the stable cohomology of the mapping class group, which we now know is given by $H_*(\Omega^\infty\MTSO(2);\bZ)$ by \cite[Theorem~1.1]{MadsenWeiss}. Therefore we observe that the derivatives have isomorphic homology groups in the stable range. We leave it to the reader to chase through the computations to verify that the isomorphism is induced by the map on differentials. (Though, as \cref{section:comparison} suggests, this verification is not trivial.)
\end{proof}

\printbibliography

\end{document}